\definecolor{webgreen}{rgb}{0,.5,0}
\newcommand{\genlegendre}[4]{
  \genfrac{(}{)}{}{#1}{#3}{#4}
  \if\relax\detokenize{#2}\relax\else_{\!#2}\fi
}
\newcommand{\legendre}[3][]{\genlegendre{}{#1}{#2}{#3}}
\newcommand*{\carry}[1][1]{\overset{\ \textcolor{red}{#1}}}
\newcommand*{\Emptycarry}[1][1]{\overset{\ \textcolor{red}{#1}}{\textcolor{white}{1}}}
\newcommand{\floor}[1]{\lfloor #1 \rfloor}
\newcommand{\bfloor}[1]{\bigg\lfloor \displaystyle #1 \bigg\rfloor}
\newcommand{\ceil}[1]{\lceil #1 \rceil}
\newcommand{\subalign}[1]{
  \vcenter{
    \Let@ \restore@math@cr \default@tag
    \baselineskip\fontdimen10 \scriptfont\tw@
    \advance\baselineskip\fontdimen12 \scriptfont\tw@
    \lineskip\thr@@\fontdimen8 \scriptfont\thr@@
    \lineskiplimit\lineskip
    \ialign{\hfil$\m@th\scriptstyle##$&$\m@th\scriptstyle{}##$\hfil\crcr
      #1\crcr
    }
  }
}
\begin{document}

\theoremstyle{plain}
\newtheorem{theorem}{Theorem}
\newtheorem{corollary}[theorem]{Corollary}
\newtheorem{lemma}[theorem]{Lemma}
\newtheorem{proposition}[theorem]{Proposition}

\theoremstyle{definition}
\newtheorem{conjecture}[theorem]{Conjecture}

\theoremstyle{remark}
\newtheorem{remark}[theorem]{Remark}

\begin{center}
\vskip 1cm{\LARGE\bf 
Multinomial Catalan Numbers and Lucas Analogues \\
}
\vskip 1cm
\large
Joaquim Cera Da Concei\c c\~ao\\
Normandie Universit\'e\\
UNICAEN, CNRS\\
Laboratoire de Math\'ematiques Nicolas Oresme\\
14000 Caen\\
France\\
\href{mailto:joaquim.cera-daconceicao@unicaen.fr}{\tt joaquim.cera-daconceicao@unicaen.fr} \\
\end{center}

\vskip .2in

\begin{abstract}
We define a new generalization of Catalan numbers to multinomial coefficients. With arithmetic methods, we study their integrality and the integrality of their Lucasnomial generalization. We give a complete characterization of regular Lucas sequences for which they yield integers up to finitely many cases.
\end{abstract}

\section{Introduction}\label{sec:intro}\label{Section1}

The central binomial coefficient $\binom{2n}{n}$ is well known to be divisible by $n+1$, thus forming the integer sequence of Catalan numbers $C(n)$. Indeed,
\[
C(n) = \frac{1}{n+1} \binom{2n}{n},
\]
for all integers $n\geq 0$. These numbers are of great interest since they appear in various contexts and have many combinatorial interpretations. Moreover, they have been generalized in several ways, such as the generalized Lucasnomial Fuss-Catalan numbers, defined by
\[
 \frac{U_r}{U_{(a-1)n+r}} \binom{an+r-1}{n}_U := \frac{U_r}{U_{(a-1)n+r}} \cdot\frac{U_{an+r-1} \cdots U_{(a-1)n+r}}{U_n \cdots U_1},
\]
for all integers $n\geq 1$, where $a\geq 2$ and $r\geq 1$ are fixed integers, $U=(U_n)_{n\geq 1}$ is a Lucas sequence and $\binom{an+r-1}{n}_U$ is a generalized binomial coefficient with respect to $U$, which we call a {\it Lucasnomial coefficient}. They are a surprising example of a generalization which yields integers only. This was shown by Ballot \cite{Ba1,Ba3} in recent papers which are of relevance to our work. A Lucas sequence $U$ is a second-order linear recurrence satisfying $U_0=0$, $U_1=1$ and $U_{n+2}=P U_{n+1}-QU_n$ for all $n\geq 0$, where $P,Q$ are integers. Generalized binomial coefficients with respect to an integer sequence $X=(X_n)_{n\geq 0}$, with $X_n\ne 0$ for all $n\geq 1$, are defined as follows:
\[
\binom{m}{n}_X =
\begin{cases}
    \frac{X_{m} X_{m-1} \cdots X_{m-n+1}}{X_n X_{n-1} \cdots X_1}, & \text{if $m\geq n \geq 1$;} \\
    1, & \text{if $n=0$;} \\
    0, & \text{otherwise.} \\
\end{cases}
\]

In this paper, we prove the integrality of all but a finite number of terms of yet another generalization of Catalan numbers, namely the {\it multinomial Catalan numbers}. These are defined for all integers $n\geq 1$ by
\[
C_l(n) =   \frac{1}{(n+1)^l} \binom{(l+1)n}{n}_l := \frac{1}{(n+1)^l} \binom{(l+1)n}{n,n, \ldots, n},
\]
where $l$ is a positive integer and 
\[
\binom{a_1+a_2+\cdots + a_{l+1}}{a_1,a_2, \ldots, a_{l+1}} = \frac{(a_1+\cdots + a_{l+1})!}{a_1! \cdots a_{l+1}!}
\]
are multinomial coefficients with positive integers $a_1, \ldots, a_{l+1}$. It is called a {\it central multinomial coefficient} when all the $a_i$'s are equal.  In particular, our study leads to the following generalization of the integrality of Catalan numbers:

\begin{theorem}\label{ThmIntro}
    The numbers $C_l(n)$ are integers for all $n\geq l$.
\end{theorem}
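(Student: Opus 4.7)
The plan is to establish integrality prime by prime: for every prime $p$ we verify
\[
v_p\binom{(l+1)n}{n,\ldots,n}\geq l\cdot v_p(n+1).
\]
By Kummer's multinomial identity this valuation equals $((l+1)s_p(n) - s_p((l+1)n))/(p-1)$, where $s_p$ denotes the base-$p$ digit sum. Writing $v = v_p(n+1)$ and $n+1 = p^v m$ with $\gcd(m,p)=1$, the base-$p$ expansion of $n = p^v m - 1$ begins with $v$ digits equal to $p-1$, whence $s_p(n) = v(p-1)+s_p(m)-1$. Substituting turns the desired inequality into the claim that
\[
s_p((l+1)n)\leq v(p-1) + (l+1)(s_p(m)-1),\qquad (\ast)
\]
for every prime $p$ and every $n\geq l$.

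We first dispatch the main case $p^v\geq l+1$. Here $(l+1)n = (l+1)p^v m - (l+1)$ is a subtraction whose subtrahend fits within the lowest $v$ base-$p$ digits of $(l+1)p^v m$, and a direct borrow count produces exactly $v$ borrows, giving $s_p((l+1)n) = s_p((l+1)m) - s_p(l+1) + v(p-1)$. Plugging this into $(\ast)$ reduces the task to $v_p\binom{(l+1)m}{m,\ldots,m}\geq v_p((l+1)!)$, which is an instance of the elementary divisibility $(l+1)!\mid\binom{(l+1)m}{m,\ldots,m}$ (the quotient counts set partitions of an $(l+1)m$-element set into $l+1$ indistinguishable blocks of size $m$).

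The residual case $p^v<l+1$ is where the hypothesis $n\geq l$ enters essentially. Since $n+1 = p^v m\geq l+1>p^v$ we have $m\geq 2$, and because $p\nmid m$ this forces $s_p(m)\geq 2$, supplying a slack of at least $l+1$ on the right side of $(\ast)$. Writing $l+1 = \alpha p^v + \beta$ with $\alpha\geq 1$ and $0\leq\beta<p^v$, we re-express $(l+1)n = p^v((l+1)m-\alpha) - \beta$, a subtraction with subtrahend $\beta<p^v$ to which the Case-1 analysis applies. The resulting borrow count, combined with the addition identity $s_p((l+1)m) = s_p((l+1)m-\alpha) + s_p(\alpha) - (p-1)v_p\binom{(l+1)m}{\alpha}$, transforms $(\ast)$ into a refined inequality of the shape $v_p\binom{(l+1)m}{m,\ldots,m}\geq v_p((l+1)!) + v_p\binom{(l+1)m}{\alpha} + \text{correction}$, and the slack $s_p(m)\geq 2$ coming from $n\geq l$ is precisely what absorbs the correction. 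The main obstacle is this second case, whose borrow bookkeeping across positions straddling $v$ is delicate and must be tied back to the integrality of $\binom{(l+1)m}{m,\ldots,m}/(l+1)!$.
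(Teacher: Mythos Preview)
Your Case 1 ($p^v\geq l+1$) is correct and genuinely different from the paper's approach: you reduce cleanly to the combinatorial divisibility $(l+1)!\mid\binom{(l+1)m}{m,\ldots,m}$, whereas the paper counts carries in the $(l+1)$-fold addition of $p^j-1$ via its Lemmas~\ref{Lemma1}--\ref{Lemma2} and Theorem~\ref{ThmCarries}. Your route is shorter here and pleasantly conceptual.

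The gap is Case 2. After your reduction (for $\beta\geq 1$, say) the target inequality becomes
\[
v_p\!\left(\frac{1}{(l+1)!}\binom{(l+1)m}{m,\ldots,m}\right)\;\geq\; v_p\binom{(l+1)m}{\alpha}+v_p\big((l+1)m-\alpha\big)-v_p(\beta),
\]
and you assert that ``the slack $s_p(m)\geq 2$ \ldots\ is precisely what absorbs the correction.'' But there is no slack left: $s_p(m)$ has already been used \emph{exactly} in the identity $(l+1)s_p(m)-s_p((l+1)m)=(p-1)\,v_p\binom{(l+1)m}{m,\ldots,m}$, so the reduced inequality contains no free $s_p(m)$-term to spend. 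What $m\geq 2$ buys is that the reduced inequality is \emph{true} (it must be, since the theorem holds), not that it is \emph{trivially} true. The right-hand side can be positive---for instance with $p=2$, $l=2$, $m=3$ one gets $v_2\binom{9}{1}+v_2(8)-v_2(1)=3$, matched exactly by the left side $v_2\big(\binom{9}{3,3,3}/3!\big)=4-1=3$---so a real argument is still required, and your closing sentence concedes as much.

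The paper resolves the analogue of your Case 2 in Lemma~\ref{Lemma4}. There the key input from $m\geq 2$ (the paper's $\lambda\geq 2$) is not a digit-sum slack but a lower bound on the \emph{extra} carries produced by the $(l+1)$-fold addition of $(\lambda-1)p^j$: since $\lambda-1$ has a nonzero digit, one gets at least $\lfloor(l+1)/p\rfloor$ carries from that block, and this is shown to dominate the deficit $(\eta-1)j$ coming from the base-$p^j$ digit count $\eta$ of $l+1$. That estimate, together with Lemma~\ref{carriesSum}, is what closes the case. Your digit-sum framework does not yet reproduce this step; to finish along your lines you would need an independent bound relating $v_p\binom{(l+1)m}{\alpha}+v_p((l+1)m-\alpha)$ to $v_p\big(\binom{(l+1)m}{m,\ldots,m}/(l+1)!\big)$, and nothing in the proposal supplies one.
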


The usual Catalan numbers correspond to $l=1$. Proving Theorem \ref{ThmIntro} will require studying the $p$-adic valuation of multinomial coefficients, where $p$ represents a prime number. Kummer \cite{Ku} showed that in the binomial case, the $p$-adic valuation of $\binom{m+n}{n}$ is equal to the number of carries in the base-$p$ addition of $m$ and $n$. From Kummer's theorem, usually called Kummer's rule, and the identity
\begin{equation}\label{IdenIntro}
    \binom{a_1+a_2+\cdots + a_{l+1}}{a_1,a_2, \ldots, a_{l+1}} = \prod_{i=1}^{l+1} \binom{ a_1 + \cdots + a_i}{a_i},
\end{equation}
we obtain the following generalization of the Kummer rule:

\begin{theorem}[Generalized Kummer rule]\label{Kummersrule}
    The $p$-adic valuation of the multinomial coefficient
    \[
    \binom{a_1+a_2+\cdots+a_{l+1}}{a_1, a_2, \ldots, a_{l+1}},
    \]
    is equal to the number of carries that occur when adding $a_1, a_2, \ldots, a_l$, and $a_{l+1}$ in base $p$. The number of carries is independent of the order of the summands and of the parenthesization of the sum.
\end{theorem}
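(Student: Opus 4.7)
The plan is to obtain Theorem \ref{Kummersrule} as a direct consequence of the classical (binomial) Kummer rule together with the product identity (\ref{IdenIntro}). Starting from
\[
\binom{a_1+a_2+\cdots+a_{l+1}}{a_1,a_2,\ldots,a_{l+1}}=\prod_{i=1}^{l+1}\binom{a_1+\cdots+a_i}{a_i},
\]
I would take $p$-adic valuations on both sides. The $i=1$ factor is trivial, and for $i\geq 2$ the classical Kummer rule identifies $v_p\binom{a_1+\cdots+a_i}{a_i}$ with the number of carries produced when $a_i$ is added to the running partial sum $a_1+\cdots+a_{i-1}$ in base $p$. Summing these contributions yields exactly the total number of carries that occur in the sequential left-to-right computation $((\cdots(a_1+a_2)+a_3)+\cdots)+a_{l+1}$, which establishes the valuation formula for that particular ordering and parenthesization.

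To show that the count is independent of the order of the summands, I would exploit the symmetry of the multinomial coefficient in its arguments: applying the previous argument to any permutation $(a_{\sigma(1)},\ldots,a_{\sigma(l+1)})$ produces the same left-hand side and hence the same total number of carries. For parenthesization-independence, I would replace (\ref{IdenIntro}) by the analogous telescoping factorization that matches the chosen parenthesization (for instance, the grouping $(a_1+a_2)+(a_3+a_4)$ factors through $\binom{a_1+a_2}{a_1}\binom{a_3+a_4}{a_3}\binom{a_1+a_2+a_3+a_4}{a_1+a_2}$), which also collapses to the multinomial coefficient after canceling factorials. Applying Kummer's rule term by term then equates the sum of carries accumulated over all the intermediate pairwise additions with the same $p$-adic valuation.

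There is no genuine difficulty here: the result is essentially a bookkeeping consequence of the classical binomial Kummer rule, and the proof is essentially linear in the number of summands. The only subtlety worth highlighting is that \emph{the number of carries when adding several integers in base $p$} requires a precise interpretation to begin with; any such interpretation amounts to choosing a parenthesization into pairwise sums, and the telescoping argument above shows that every choice returns the same count.
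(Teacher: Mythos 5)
Your proposal is correct and follows essentially the same route as the paper: factor the multinomial coefficient into binomial coefficients matching the chosen parenthesization (identity \eqref{IdenIntro} for the left-to-right case, and the analogous factorization through $\binom{A+B}{B}$ with an inductive/telescoping argument for a general parenthesization), apply the classical Kummer rule to each factor, and invoke the symmetry of the multinomial coefficient for order-independence. Your closing remark that the carry count only makes sense once a pairwise parenthesization is fixed is exactly the caveat the paper emphasizes with its base-$2$ example.
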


Note that the identity \eqref{IdenIntro} seems to indicate that we first add $a_1+a_2$, then $(a_1+a_2)+a_3$, etc., but the theorem states a stronger result. An expression $a_1 + a_2 + \cdots + a_{l+1}$ is said to be {\it parenthesized} when any pair of parentheses encloses exactly two summands. For instance, the expression $((a_1+a_2)+(a_3+a_4))$ is parenthesized, while $((a_1+a_2+a_3)+a_4)$ is not. The number of carries is {\it associative}, i.e., independent of the parenthesization of the sum $a_1+\cdots+a_{l+1}$. Indeed, given a parenthesization where the last addition to perform is $A+B$, where $A=a_1+\cdots+a_r$ and $B=a_{r+1}+\cdots +a_{l+1}$ for some $1\leq r\leq l$, we have
\[
M_{l+1}:=\binom{a_1+a_2+\cdots+a_{l+1}}{a_1, a_2, \ldots, a_{l+1}} =\binom{A}{a_1, \ldots, a_r}\binom{B }{a_{r+1}, \ldots, a_{l+1}} \binom{A+B}{B}.
\]
Thus, by an induction argument, one can write any multinomial coefficient as the product of binomial coefficients corresponding to each addition in the chosen parenthesization. Taking $p$-adic valuations in the final expression using the ordinary Kummer rule, we must find a total number of carries equal to $v_p(M_{l+1})$ for all parenthesizations. Moreover, for every permutation $\sigma \in S_{l+1}$, we have
\[
\binom{a_1+a_2+\cdots+a_{l+1}}{a_1, a_2, \ldots, a_{l+1}} = \binom{a_{\sigma(1)}+a_{\sigma(2)}+\cdots+a_{\sigma(l+1)}}{a_{\sigma(1)}, a_{\sigma(2)}, \ldots, a_{\sigma(l+1)}},
\]
hence the number of carries produced in the sum $a_{\sigma(1)}+\cdots +a_{\sigma(l+1)}$ is the same for all permutations $\sigma \in S_{l+1}$. Let us give an example of the need of parenthesizing. Adding four copies of $a=(111)_2$ in base $2$ in the parenthesized expression $((a+a)+(a+a))$  yields $9$ carries since we have
\begin{center}
    \begin{tabular}{ccccc}
          & $\Emptycarry$ & $\carry1$ & $\carry 1$ & $1$ \\
        + & & $1$ & $1$ & $1$ \\
        \hline
         & $1$ & $1$ & $1$ & $0$ \\
    \end{tabular}
    \quad and \quad 
    \begin{tabular}{cccccc}
          & $\Emptycarry$ & $\carry1$ & $\carry1$ & $1$ & $0$ \\
        + & & $1$ & $1$ & $1$ & $0$ \\
        \hline
         & $1$ & $1$ & $1$ & $0$ & $0$ \\
    \end{tabular},
\end{center}
and the first addition is performed twice. However, adding four copies of $a$ in the non-parenthesized expression $(a+a+a+a)$ yields only $3$ carries. Indeed, we have
\begin{center}
    \begin{tabular}{cccccc}
          & $\Emptycarry$ & $\Emptycarry$ & $\carry1$ & $1$ & $1$ \\
          & & & $1$ & $1$ & $1$ \\
          & & & $1$ & $1$ & $1$ \\
        + & & & $1$ & $1$ & $1$ \\
        \hline
          & $1$ & $1$ & $1$ & $0$ & $0$ \\
    \end{tabular}
\end{center}
since at each position, the addition of four $1$'s yields $(100)_2$, thus producing a carry two positions to the left. Therefore, it is important that additions be performed with parenthesization.

Section \ref{Section2} is dedicated to lemmas on the number of carries that occur when adding numbers of the form $p^j-1$ in base $p$, with $j\geq 1$, and to one property of carries in arbitrary base-$p$ additions.

In Section \ref{Section3}, we prove our main theorem on multinomial Catalan numbers, Theorem \ref{Thm3}, which states that the numbers $C_l(n)$ are non-integers for finitely many $n\geq 1$. Moreover, we give a necessary condition on the form of integers $n$ for which $C_l(n)$ is a non-integer.

We differ introducing several definitions, properties, and results concerning Lucas sequences and Lucasnomial coefficients to the beginning of Section \ref{Section4}. In particular, we will recall the primitive divisor theorem and the classification of $k$-defective regular Lucas sequences given by Bilu, Hanrot, and Voutier \cite{Bi}, and a version of Kummer's theorem suited for Lucasnomial coefficients \cite{Ba2}.

Section \ref{Section5} proves preliminary results on the $l${\it-Lucasnomial Catalan numbers}
\[
C_{U,l}(n) = \frac{1}{U_{n+1}^l} \binom{ (l+1)n}{n }_{U,l} := \frac{1}{U_{n+1}^l} \binom{ (l+1)n}{n,n, \ldots, n }_U,
\]
where $U$ is a Lucas sequence and $ \binom{ (l+1)n}{n}_{U,l}$ is a generalized multinomial coefficient with respect to $U$, which we call an $l${\it-Lucasnomial coefficient}. (They will be properly defined in Section \ref{Section4}.)

In Section \ref{Section6}, we proceed to find all pairs $(U,l)$ such that the $l$-Lucasnomial Catalan numbers $C_{U,l}(n)$ are integers for all but a finite number of integers $n\geq 1$. It turns out that only a few pairs $(U,l)$ yield only integers up to finitely many exceptions.

Throughout our paper, unless stated otherwise, the letters $l$ and $j$ stand for arbitrary fixed positive integers and the letter $p$ represents a prime number. We write $\mathbb{P}$ for the set of prime numbers. The $p$-adic valuation is denoted by $v_p$. For two integers $a\leq b$, we define $\llbracket a,b \rrbracket$ as the intersection $[a,b]\cap \mathbb{Z}$. The number of carries in a parenthesized base-$p$ addition of the positive integers $a_1,\ldots, a_{l+1}$ will be denoted by $C_p(l; a_1,\ldots, a_{l+1})$. When $a_1=a_2=\cdots =a_{l+1}$, the notation $C_p(l;a_1)$ will be used instead.

\section{Counting carries in base-\texorpdfstring{$p$}{TEXT} additions}\label{Section2}

In our paper and mostly in this section we use various notation for the $p$-ary expansion of an integer. For $N$ an integer and $a_\theta, a_{\theta-1}, \ldots, a_1, a_0$ its $p$-ary digits, the base-$p$ expansion of $N$ will sometimes be written as
\[
[a_\theta, a_{\theta-1}, \ldots, a_1, a_0]_p.
\]
Other times, it will be represented explicitly as the sum
\[
a_0+a_1 p + \cdots + a_\theta p^\theta = \sum_{i=0}^\theta a_i p^i.
\]
Given a positive integer $m$, we define the $m${\it-fold base-$p$ addition of $N$} as any parenthesized addition of $m$ copies of $N$ in base $p$. We proceed to prove two preliminary lemmas for the $m$-fold base-$p$ additions of numbers of the form $p^j-1$, $j\geq 1$. This section ends with a general lemma on the number of carries in base-$p$ additions. This lemma requires the use of the Legendre formula:
\[
v_p(n!)=\sum_{j\geq 1} \bfloor{\frac{n}{p^j}},
\]
valid for all $n\geq 0$.

\begin{lemma}\label{Lemma1}
    Let $m\in \llbracket 1, p \rrbracket$. The $m$-fold base-$p$ addition of $N=p^j-1$ generates $(m-1)j$ carries and the $p$-ary expansion of $mN$ is
    \[
    [(m-1), \underbrace{(p-1),\ldots,(p-1)}_{\text{$j-1$ times}}, (p-m)]_p.
    \]
\end{lemma}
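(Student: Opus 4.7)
The plan is to treat the two assertions of Lemma~\ref{Lemma1} separately: first derive the $p$-ary expansion of $mN$ by direct computation, then deduce the carry count from Theorem~\ref{Kummersrule} together with Legendre's formula applied to the multinomial coefficient $\binom{mN}{N,\ldots,N}$.

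For the expansion I would start from
\[
mN = mp^j - m = (m-1)p^j + (p^j - m).
\]
Since $1 \leq m \leq p$, the term $p^j - m$ rewrites in base $p$ as $(p-m) + (p-1)\sum_{i=1}^{j-1}p^i$, all of whose coefficients lie in $\llbracket 0, p-1 \rrbracket$. Combining with the leading $(m-1)p^j$ gives the claimed expansion; the endpoint $m = p$ requires no special treatment, as it simply makes the leading digit $p-1$ and the trailing digit $0$.

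For the carry count, Theorem~\ref{Kummersrule} identifies the number of carries in any parenthesized $m$-fold base-$p$ addition of $N$ with $v_p\!\left(\binom{mN}{N, \ldots, N}\right)$. Applying the digit-sum form of Legendre's formula, $v_p(n!) = (n - s_p(n))/(p-1)$, gives
\[
v_p\!\left(\binom{mN}{N, \ldots, N}\right) = \frac{m\, s_p(N) - s_p(mN)}{p-1}.
\]
The expansion from the previous step yields $s_p(N) = j(p-1)$ and $s_p(mN) = (m-1) + (j-1)(p-1) + (p-m) = j(p-1)$, so this valuation collapses cleanly to $(m-1)j$.

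I do not expect a serious obstacle; the only ingredient beyond elementary arithmetic is Theorem~\ref{Kummersrule}, already justified in the introduction. An alternative route would be to induct on $m$ along the left-associative parenthesization, verifying that appending one more copy of $N$ to $mN$ always produces exactly $j$ fresh carries --- one at each of positions $0$ through $j-1$, and none at position $j$ since the incoming carry plus the digit $(m-1)$ equals $m \leq p-1$. This alternative still relies on Theorem~\ref{Kummersrule} at the end to transfer the count to arbitrary parenthesizations.
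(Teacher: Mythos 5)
Your proof is correct, but it takes a genuinely different route from the paper. The paper proves Lemma~\ref{Lemma1} by finite induction on $m$: it fixes a parenthesization whose last addition is $mN+N$, uses the inductive digit pattern of $mN$, and tracks the carry propagation explicitly (one carry at position $0$ from $(p-m)+(p-1)$, which propagates through the $j-1$ digits equal to $p-1$ and stops at position $j$ since $(m-1)+1<p$), obtaining both the expansion and the count of $j$ new carries simultaneously; the extension to arbitrary parenthesizations rests, as in your case, on the independence statement built into Theorem~\ref{Kummersrule}. You instead get the expansion by the one-line identity $mN=(m-1)p^j+(p^j-m)$ and then obtain the carry count globally, as $v_p\bigl(\binom{mN}{N,\ldots,N}\bigr)$ via Theorem~\ref{Kummersrule} and the digit-sum form of Legendre's formula, using $s_p(N)=s_p(mN)=j(p-1)$. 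This is legitimate and non-circular, since Theorem~\ref{Kummersrule} is established in the introduction independently of Lemma~\ref{Lemma1} (the digit-sum form of Legendre's formula is a standard equivalent of the floor-sum version quoted in the paper). Your route buys brevity and handles all parenthesizations at once without tracking carries digit by digit; the paper's induction is more elementary and self-contained at the digit level, and its carry-propagation style is the template reused later in Theorem~\ref{ThmCarries}. Your sketched ``alternative route'' (appending one copy of $N$ at a time and checking that exactly $j$ fresh carries appear) is essentially the paper's proof.
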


\begin{proof}
    We prove the lemma by finite induction on $m$. For $m=1$, there is nothing to prove. Thus assume the statement holds for $m\in \llbracket1,p-1 \rrbracket$ and let us show that it is true for $m+1$. Let us choose a parenthesization so that the last base-$p$ addition we perform is $mN+N$. By the induction hypothesis, $(m-1)j$ carries are produced to obtain $mN$ and
    \[
    mN=[(m-1), \underbrace{(p-1),\ldots,(p-1)}_{\text{$j-1$ times}}, (p-m)]_p.
    \]
    As $p-m\geq 1$, adding the first digits of $mN$ and $N$ yields $(p-m)+(p-1) = p+(p-m-1)$. Therefore, the first digit of $(m+1)N$ is $p-m-1$ and one initial carry is produced. Moreover, the initial carry propagates and generates $j-1$ further carries. As the first $j$ digits of $N$ are $p-1$, the digits of $(m+1)N$, from the second to the $j$-th, are $p-1$. The last carry is added to the sum of the $(j+1)$-st digits of $mN$ and $N$ and does not generate another carry as $(m-1)+1= m<p$. Hence
    \[
    (m+1)N=[m, \underbrace{(p-1),\ldots,(p-1)}_{\text{$j-1$ times}}, (p-m-1)]_p,
    \]
    and the addition did produce $(m-1)j+j=mj$ carries.
\end{proof}

\begin{remark}\label{rem1}
    A similar result holds if we replace $p^j-1$ by $p^s(p^j-1)$ for some $s\geq 1$, since this will amount to shifting all operations $s$ digits to the left, leaving $s$ zero digits to the right.
\end{remark}

\begin{lemma}\label{Lemma2}
    For all integers $i\geq 0$, a $p^i$-fold base-$p$ addition of $p^j-1$ produces $(p^i-1)j$ carries.
\end{lemma}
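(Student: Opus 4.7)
The plan is to prove Lemma \ref{Lemma2} by induction on $i$, leveraging the fact established in Theorem \ref{Kummersrule} that the number of carries is associative (independent of the chosen parenthesization). This lets us freely pick a convenient parenthesization to evaluate the total carry count.

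For the base case $i=0$, a $1$-fold addition of $p^j-1$ involves no addition at all and produces $(p^0-1)j = 0$ carries. For the inductive step, assume the claim holds for some $i\geq 0$, and consider a $p^{i+1}$-fold base-$p$ addition of $N = p^j-1$. By associativity of the carry count, I may group the $p^{i+1}$ copies of $N$ into $p$ blocks, each containing $p^i$ copies, perform the $p^i$-fold addition inside each block first, and then add the $p$ resulting partial sums together.

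By the induction hypothesis, each of the $p$ inner blocks contributes $(p^i-1)j$ carries, for a subtotal of $p(p^i-1)j$ carries. Each inner block evaluates to $p^i N = p^i(p^j-1)$, regardless of carries, since total sums are unaffected by carry bookkeeping. It then remains to count the carries in the base-$p$ addition of $p$ copies of $p^i(p^j-1)$. By Remark \ref{rem1}, this is the same as the carry count of a $p$-fold addition of $p^j-1$ shifted left by $i$ digits, which by Lemma \ref{Lemma1} with $m = p$ equals $(p-1)j$ carries. Summing the two contributions gives
\[
p(p^i - 1)j + (p-1)j = (p^{i+1} - p)j + (p-1)j = (p^{i+1} - 1)j,
\]
completing the induction.

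The only subtle point is justifying the use of a nested parenthesization in which we first consolidate blocks of $p^i$ copies and then add the $p$ partial sums: this is precisely what the associativity clause of Theorem \ref{Kummersrule} permits, and without it one could not mix the induction hypothesis (governing an inner addition) with Lemma \ref{Lemma1} (governing the outer $p$-fold addition of equal shifted summands). I don't expect any real obstacle beyond this bookkeeping, since once the parenthesization is fixed the arithmetic is essentially immediate.
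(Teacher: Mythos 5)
Your proof is correct, and it is the paper's induction with the two levels of grouping swapped. The paper writes $p^{i+1}N=\sum_{k=1}^{p^i} pN$, i.e., $p^i$ blocks of $p$ copies each: Lemma~\ref{Lemma1} handles each inner $p$-fold addition of $N$, and Remark~\ref{rem1} transfers the induction hypothesis to the outer $p^i$-fold addition of the shifted number $pN$, giving $p^i(p-1)j+(p^i-1)j$. You instead take $p$ blocks of $p^i$ copies each, apply the induction hypothesis inside each block, and use Remark~\ref{rem1} together with Lemma~\ref{Lemma1} (with $m=p$) for the outer $p$-fold addition of $p^iN$, giving $p(p^i-1)j+(p-1)j$; both totals equal $(p^{i+1}-1)j$. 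A small point in favor of your arrangement is that Remark~\ref{rem1} is invoked exactly in the setting in which it is stated (a shifted instance of Lemma~\ref{Lemma1}), whereas the paper applies the same shift-invariance principle to the inductively assumed $p^i$-fold addition; conversely, the paper's grouping lets the explicit digit pattern of Lemma~\ref{Lemma1} do the repeated inner work. Your explicit appeal to the associativity of the carry count (Theorem~\ref{Kummersrule}) to justify the nested parenthesization is fine and indeed necessary; the paper relies on the same fact implicitly, since an $m$-fold addition is by definition any parenthesized addition of $m$ copies.
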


\begin{proof}
    Write $N=p^j-1$. For $i=0$, there is nothing to prove. Assume the statement is true for some $i\geq 0$ and let us show it is true for $i+1$. First, write
    \begin{equation}\label{p^ifoldSum}
        p^{i+1}N = \sum_{k=1}^{p^i} pN.
    \end{equation}
    By Lemma \ref{Lemma1}, the $p$-fold base-$p$ addition of $N$ produces $(p-1)j$ carries, and this is done a total of $p^i$ times on the right hand side of \eqref{p^ifoldSum}. Thus, $p^i(p-1)j$ carries are produced in this way. By Remark \ref{rem1}, the number of carries occurring in the $p^i$-fold addition of $pN$ is the same as the number of carries produced in the $p^i$-fold base-$p$ addition of $N$. By the induction hypothesis, this addition produces $(p^i-1)j$ carries. Therefore, the total number of carries counted is
    \[
    p^i(p-1)j + (p^i-1)j = (p^{i+1}-1)j,
    \]
    and this completes the proof.
\end{proof}

\begin{theorem}\label{ThmCarries}
    Let $m\in \llbracket 1, p^j-1\rrbracket$ and write $m=p^\nu M$ with $p\nmid M$. The $m$-fold base-$p$ addition of $N=p^j-1$ generates $(m-1)j$ carries and the $p$-ary expansion of $MN$ is
    \[
    \big[m_\theta,\ldots, m_1,(m_0-1), \underbrace{(p-1),\ldots,(p-1)}_{\text{$j-\theta-1$ times}}, (p-m_\theta-1),\ldots,(p-m_1-1),(p-m_0) \big]_p,
    \]
    where $M=[m_\theta,\ldots, m_1,m_0]_p$ with $0\leq \theta< j$ and $m_\theta\geq 1$.
\end{theorem}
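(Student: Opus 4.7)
The plan is to prove the two assertions in sequence: first compute the $p$-ary expansion of $MN$ by a direct base-$p$ subtraction, and then derive the carry count via Legendre's formula combined with the generalized Kummer rule.

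For the expansion, I would write $MN = M(p^j-1) = p^jM - M$ and perform this subtraction digit by digit. Writing $M = [m_\theta, \ldots, m_0]_p$ with $\theta < j$, the expansion of $p^jM$ has digits $m_0, m_1, \ldots, m_\theta$ at positions $j, j+1, \ldots, j+\theta$, while $M$ has the same digits at positions $0, 1, \ldots, \theta$, separated by a block of $j-\theta-1$ zero positions of $p^jM$. The hypothesis $p\nmid M$ forces $m_0 \geq 1$, so the subtraction at position $0$ produces the digit $p-m_0$ and triggers a borrow. This borrow then propagates through positions $1, \ldots, \theta$ (producing digits $p-1-m_i$) and through positions $\theta+1, \ldots, j-1$ (producing digits $p-1$), and is absorbed at position $j$, where the top digit $m_0\geq 1$ yields the digit $m_0-1$ and terminates the cascade. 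The remaining positions $j+1, \ldots, j+\theta$ simply keep their top digits $m_1, \ldots, m_\theta$. This reproduces the claimed expansion.

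For the carry count, I would invoke the generalized Kummer rule (Theorem~\ref{Kummersrule}): the number of carries in the $m$-fold base-$p$ addition of $N$ equals $v_p\binom{mN}{N,\ldots,N}$. Legendre's formula expresses this valuation as $\bigl(m\,s_p(N) - s_p(mN)\bigr)/(p-1)$, where $s_p$ denotes the base-$p$ digit sum. One has $s_p(N) = j(p-1)$ directly from $N=p^j-1$, and $s_p(mN) = s_p(MN)$ since $mN = p^\nu\cdot MN$ is just a shift of $MN$. Summing the digits in the explicit expansion of $MN$, the $m_i$'s from the upper block cancel against the $-m_i$'s from the lower block, leaving $\theta(p-1)$; the central block contributes $(j-\theta-1)(p-1)$; and the two extreme digits $(m_0-1)$ and $(p-m_0)$ sum to $p-1$. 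Altogether $s_p(MN) = j(p-1)$, and substituting into Legendre's formula yields the claimed $(m-1)j$ carries.

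The main obstacle will be the careful bookkeeping in the base-$p$ subtraction: one must check that the conditions $m_0\geq 1$ (from $p\nmid M$) and $\theta < j$ (from $m \leq p^j-1$) are precisely what is needed for the borrow to traverse the full middle gap of $j-\theta-1$ zeros and be absorbed cleanly at position $j$ without triggering any further cascade. Once the expansion is settled, the digit-sum computation reduces to a routine cancellation.
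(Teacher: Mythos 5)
Your proposal is correct, but it follows a genuinely different route from the paper. The paper never uses subtraction or digit sums: it builds the result by a layered induction --- Lemma~\ref{Lemma1} treats $m\le p$, Lemma~\ref{Lemma2} treats $m=p^i$ by splitting $p^{i+1}N=\sum_{k=1}^{p^i}pN$, and Theorem~\ref{ThmCarries} itself is proved by induction on $\theta$, writing the sum as $S_1+S_2$ with $S_1=\sum_{i=1}^{M'}N$ and $S_2=\sum_{i=1}^{m_{\theta+1}p^{\theta+1}}N$ and counting the $j$ carries in $S_1+S_2$ directly from the two explicit expansions, the case $\nu\ge 1$ being reduced to $\nu=0$ via Remark~\ref{rem1} and Lemma~\ref{Lemma2}. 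You instead obtain the expansion of $MN$ in one stroke from the borrow cascade in $p^jM-M$ (your bookkeeping is right: $p\nmid M$ gives $m_0\ge 1$, so the borrow starts at position $0$, produces the digits $p-m_i-1$ and then $p-1$ across the gap, and is absorbed at position $j$; the hypothesis $\theta<j$ keeps the two digit blocks disjoint), and you get the carry count from Theorem~\ref{Kummersrule} together with the digit-sum form of Legendre's formula, $v_p(n!)=(n-s_p(n))/(p-1)$: since $s_p(N)=s_p(mN)=j(p-1)$, the valuation of the multinomial coefficient with $m$ lower entries equal to $N$ is $(m-1)j$. This is legitimate --- the generalized Kummer rule is established in the introduction independently of Theorem~\ref{ThmCarries}, so there is no circularity --- and it is arguably shorter and less error-prone than tracking carries; the only point you should make explicit is the derivation of the digit-sum form from the paper's stated version $v_p(n!)=\sum_{i\ge1}\lfloor n/p^i\rfloor$. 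What the paper's inductive route buys in exchange is that its intermediate statements (Lemma~\ref{Lemma1}, Lemma~\ref{Lemma2}, Remark~\ref{rem1}) are reused verbatim in later proofs such as Theorem~\ref{Thm1} and Lemma~\ref{Lemma4}, whereas your argument delivers Theorem~\ref{ThmCarries} as a stand-alone fact.
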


\begin{proof}
    First, we assume $\nu =0$, i.e., $m=M$, and we proceed by induction of $\theta$. If $\theta=0$, then $M\in \llbracket1, p-1\rrbracket$ and the result follows from Lemma \ref{Lemma1}. As the result holds for $j=1$, we may assume $j\geq 2$. Assume the result holds for all $t\leq \theta$ with $\theta \in \llbracket 0, j-2 \rrbracket$ and let us show it is true for $\theta+1$. Let $M'=M-m_{\theta+1}p^{\theta+1}$, and write $M'=[m_t, \ldots, m_1,m_0]_p$, where $t\leq \theta$. In the addition
    \[
    \biggl( \sum_{i=1}^{M'} N \biggr) + \biggl(\sum_{i=1}^{m_{\theta+1}p^{\theta+1}} N \biggr),
    \]
    we let $S_1$ and $S_2$ represent respectively the first and the second sums above. Since $t\leq \theta$, exactly $(M'-1)j$ carries are produced to obtain $S_1$ by the induction hypothesis. Moreover, since
    \[
    S_2=\sum_{i=1}^{m_{\theta+1}} p^{\theta+1} N,
    \]
    using Lemma \ref{Lemma2}, followed by Lemma \ref{Lemma1} and Remark \ref{rem1}, exactly
    \[
    m_{\theta+1}(p^{\theta+1}-1)j + (m_{\theta+1}-1)j = (m_{\theta+1}p^{\theta+1}-1)j
    \]
    carries are produced to obtain $S_2$. Thus, so far we have
    \[
    (M'-1)j+(m_{\theta+1}p^{\theta+1}-1)j = (M-2)j
    \]
    carries. It remains to show that $j$ carries occur in the base-$p$ addition $S_1+S_2$. Let us give the $p$-ary expansions of $S_1$ and $S_2$. By the induction hypothesis, $S_1$ is
    \[
    [ m_t,\ldots, m_1,(m_0-1), \underbrace{(p-1),\ldots,(p-1)}_{\text{$j-t-1$ times}}, (p-m_t-1),\ldots,(p-m_1-1),(p-m_0) \big ]_p.
    \]
    For $S_2$, we find that
    \[
    S_2=[(m_{\theta+1}-1), \underbrace{(p-1),\ldots,(p-1)}_{\text{$j-1$ times}}, (p-m_{\theta+1}),\underbrace{0,\ldots,0}_{\text{$\theta+1$ times}}]_p,
    \]
    by Lemma \ref{Lemma1} and Remark \ref{rem1}. The first $\theta+1$ digits of $S_1+S_2$ are those of $S_1$ and no carry occurs. Next, adding the $(\theta+2)$-nd digits together, we have
    \[
    (p-1)+(p-m_{\theta+1}) = p + (p-m_{\theta+1}-1),
    \]
    so that the $(\theta+2)$-nd digit of $S_1+S_2$ is $(p-m_{\theta+1}-1)$ and a carry occurred. Since the $j-1$ following digits of $S_2$ are all equal to $p-1$, the first carry generates $j-1$ further carries. Moreover, this implies that the $j-1$ digits of $S_1+S_2$ starting from position $\theta+3$ are those of $S_1$. Since $S_1$ has exactly $t+j+1$ digits, there is no addition left to consider and the $p$-ary expansion of $S_1+S_2$ is
    \[
    \big[m_{\theta+1},\ldots, m_1,(m_0-1), \underbrace{(p-1),\ldots,(p-1)}_{\text{$j-\theta-2$ times}}, (p-m_{\theta+1}-1),\ldots,(p-m_1-1),(p-m_0) \big]_p.
    \]
    Therefore, we counted exactly $j$ carries in the addition $S_1+S_2$, making for a total of $(M-1)j$ carries. This ends the proof for $\nu =0$. If $\nu\geq 1$, then the $p$-ary expansion of $mN=p^\nu MN$ is a $\nu$-left shift of the digits of $MN$ leaving $\nu$ zero digits to the right. As for the number of carries, note that
    \[
    mN = \sum_{i=1}^M p^\nu N,
    \]
    where $(M-1)j$ carries are produced in the $M$-fold base-$p$ addition of $p^\nu N$ by Remark \ref{rem1} and the case $\nu =0$. Moreover, the $p^\nu$-fold base-$p$ addition of $N$ produces exactly $(p^\nu-1)j$ carries by Lemma \ref{Lemma2}, so that a total of $(M-1)j+M(p^\nu -1)j =(m-1)j$ carries are produced in the $m$-fold base-$p$ addition of $N$.
\end{proof}

Theorem \ref{ThmCarries} and Lemma \ref{Lemma2} show a definite pattern in the number of carries of an $m$-fold base-$p$ addition of $N=p^j-1$ for all $m\leq p^j$, but this pattern breaks up at $m=p^j+1$. Indeed, we have
\[
(m-1)N = p^jN = [\underbrace{(p-1),\ldots,(p-1)}_{\text{$j$ times}}, \underbrace{0,\ldots,0}_{\text{$j$ times}}]_p
\]
by Theorem \ref{ThmCarries}. Therefore, adding $N$ to $(m-1)N$ produces no carry since $N$ has exactly $j$ digits. Thus the total number of carries in the $m$-fold base-$p$ addition of $N$ is equal to the number of carries in its $(m-1)$-fold base-$p$ addition, that is, $(m-2)j$ carries.

\begin{lemma}\label{carriesSum}
    Let $a_1,\ldots, a_{l+1}$ be positive integers and write $a_i=A_i p^j + B_i$ with integers $B_i<p^j$ and $A_i\geq 0$ for all $1\leq i\leq l+1$. Then
    \[
    C_p(l;a_1,\ldots, a_{l+1}) \geq C_p(l; A_1, \ldots, A_{l+1}) +C_p(l; B_1, \ldots, B_{l+1}).
    \]
\end{lemma}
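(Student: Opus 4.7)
The plan is to translate carry counts into $p$-adic valuations via the generalized Kummer rule (Theorem \ref{Kummersrule}) and then attack the inequality termwise using Legendre's formula. Set $S_A = A_1+\cdots+A_{l+1}$, $S_B = B_1+\cdots+B_{l+1}$, so that $\sum_i a_i = S_A p^j + S_B$. The generalized Kummer rule gives
\[
C_p(l;a_1,\ldots,a_{l+1}) = v_p\!\left(\frac{(\sum_i a_i)!}{a_1!\cdots a_{l+1}!}\right),
\]
and similarly for the $(A_i)$ and the $(B_i)$. Applying Legendre's formula reduces the lemma to controlling, for each $s\geq 1$, the quantity
\[
\Delta_s(a) := \floor{\textstyle\sum_i a_i/p^s} - \sum_i \floor{a_i/p^s},
\]
in terms of analogous quantities $\Delta_s(A)$ and $\Delta_s(B)$.

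The key step is to split the range of $s$ at $j$. For $1\leq s\leq j$, the hypothesis $B_i<p^j$ forces $\floor{a_i/p^s} = A_i p^{j-s} + \floor{B_i/p^s}$, and the same Euclidean division applied to $\sum_i a_i = S_A p^j + S_B$ gives $\floor{\sum_i a_i/p^s} = S_A p^{j-s} + \floor{S_B/p^s}$, so the $S_A p^{j-s}$ pieces cancel and one obtains $\Delta_s(a)=\Delta_s(B)$ exactly. For $s=j+t$ with $t\geq 1$, writing $A_i = q_i p^t + r_i$ and using $B_i/p^{j+t}<1/p^t$ shows that $r_i/p^t + B_i/p^{j+t}<1$, hence $\floor{a_i/p^s} = \floor{A_i/p^t}$ for every $i$, while the trivial bound $\floor{\sum_i a_i/p^s} \geq \floor{S_A/p^t}$ yields $\Delta_s(a) \geq \Delta_t(A)$.

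Summing the first range over $s=1,\ldots,j$ produces exactly $v_p\!\binom{S_B}{B_1,\ldots,B_{l+1}} = C_p(l;B_1,\ldots,B_{l+1})$ (the tail $s>j$ contributes nothing there, since $\floor{B_i/p^s}=\floor{S_B/p^s}=0$). Summing the second range, with the substitution $t=s-j$, produces at least $v_p\!\binom{S_A}{A_1,\ldots,A_{l+1}} = C_p(l;A_1,\ldots,A_{l+1})$. Adding these two bounds gives the lemma.

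The main obstacle is the index bookkeeping at $s=j+t$: one must verify that each individual $B_i$-contribution to $\floor{a_i/p^s}$ really drops out (this uses $B_i<p^j$ in each summand), whereas on the collective side $\floor{\sum_i a_i/p^s}$ the aggregate $S_B$ may well push the floor up by a positive amount — but this only strengthens the inequality $\Delta_s(a)\geq \Delta_t(A)$, so no further care is required. Everything else is a routine application of Legendre's formula and the superadditivity $\floor{x+y}\geq \floor{x}+\floor{y}$.
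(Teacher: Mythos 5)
Your overall strategy (Kummer's rule to convert carries into valuations, Legendre's formula, splitting the sum over $s$ at $j$) is the same as the paper's, and your treatment of the range $1\leq s\leq j$ and of the individual terms $\floor{a_i/p^s}=\floor{A_i/p^{s-j}}$ for $s>j$ is correct. However, there is a genuine gap in how you assemble the two ranges. Your parenthetical claim that the tail contributes nothing to the $B$-valuation because ``$\floor{B_i/p^s}=\floor{S_B/p^s}=0$'' for $s>j$ is false for the second floor: each $B_i$ is less than $p^j$, but $S_B=B_1+\cdots+B_{l+1}$ can be as large as $(l+1)(p^j-1)$, so $\floor{S_B/p^s}$ can be positive for $s>j$ as soon as $l+1>p$. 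Concretely, take $p=2$, $j=1$, $l=3$, $B_i=1$ for all $i$: then $v_2\binom{4}{1,1,1,1}=3$, but your first-range sum ($s=1$ only) gives $\floor{4/2}-4\floor{1/2}=2$; the missing unit lives at $s=2$ via $\floor{S_B/4}=1$. Since in the second range you bound $\floor{(\sum_i a_i)/p^s}$ only from below by $\floor{S_A/p^{s-j}}$, these leftover $\floor{S_B/p^s}$ terms are discarded, and what your argument actually proves is $C_p(l;a_1,\ldots,a_{l+1})\geq \sum_{s=1}^{j}\bigl(\floor{S_B/p^s}-\sum_i\floor{B_i/p^s}\bigr)+C_p(l;A_1,\ldots,A_{l+1})$, which is strictly weaker than the lemma whenever $S_B\geq p^{j+1}$ --- and $p\leq l$ is precisely the regime where the lemma is later needed.

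The repair is exactly the paper's step: for $s>j$, use the identity $\floor{x+y}=\floor{x}+\floor{y}+\floor{\{x\}+\{y\}}$ (or plain superadditivity of the floor applied to $S_Ap^j$ and $S_B$) to retain both pieces, namely $\floor{(\sum_i a_i)/p^s}\geq \floor{S_A/p^{s-j}}+\floor{S_B/p^s}$. Then the terms $\floor{S_B/p^s}$ with $s>j$ combine with your first range to reconstruct the full $v_p\binom{S_B}{B_1,\ldots,B_{l+1}}$, while the $\floor{S_A/p^{s-j}}$ terms give $v_p\binom{S_A}{A_1,\ldots,A_{l+1}}$ as in your argument. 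With that single correction your proof coincides with the one in the paper.
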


\begin{proof}
    By Legendre's formula, we have
    \begin{equation}\label{legendreApp1}
        v_p\bigg(\binom{a_1 + \cdots +a_{l+1}}{a_1, \ldots, a_{l+1}}\bigg) = \sum_{i\geq 1} \biggl( \bfloor{\frac{a_1+\cdots + a_{l+1}}{p^i}} -\sum_{k=1}^{l+1} \bfloor{\frac{a_k}{p^i}} \biggr).
    \end{equation}
    When $i\leq j$, we have
    \begin{equation}\label{floor1}
        \bfloor{\frac{a_k}{p^i}} = A_kp^{j-i} + \bfloor{\frac{B_k}{p^i}},
    \end{equation}
    and
    \begin{equation}\label{floor2}
        \bfloor{\frac{a_1+\cdots + a_{l+1}}{p^i}} =\sum_{k=1}^{l+1} A_k p^{j-i}  +  \bfloor{\frac{B_1+\cdots + B_{l+1}}{p^i}}.
    \end{equation}
    When $i>j$, using the identity $\floor{x+y}=\floor{x}+\floor{y}+\floor{\{x\}+\{y\}}$ valid for all $x,y\in \mathbb{R}$, we obtain
    \[
    \bfloor{\frac{a_k}{p^i}} = \bfloor{\frac{A_k}{p^{i-j}}}+\bfloor{\frac{B_k}{p^i}} +\bfloor{ \bigg\{ \frac{A_k}{p^{i-j}} \bigg\} + \bigg\{ \frac{B_k}{p^i} \bigg\} }.
    \]
    Let $A_k=[d_\theta,\ldots, d_1,d_0]_p$. Then
    \[
    \bigg\{ \frac{A_k}{p^{i-j}} \bigg\} = \bigg\{ \frac{d_0+\cdots+d_{i-j-1}p^{i-j-1}}{p^{i-j}} \bigg\} \leq 1-\frac{1}{p^{i-j}},
    \]
    since $d_i\leq p-1$ for all $0\leq i \leq \theta$. With the same method, we obtain
    \[
     \bigg\{ \frac{B_k}{p^{i}} \bigg\} \leq \frac{1}{p^{i-j}} - \frac{1}{p^i}.
    \]
    Hence
    \begin{equation}\label{floor3}
        \bfloor{\frac{a_k}{p^i}} =\bfloor{\frac{A_k}{p^{i-j}}}+\bfloor{\frac{B_k}{p^i}}= \bfloor{\frac{A_k}{p^{i-j}}}.
    \end{equation}
    With the assumption $i>j$, and using again the identity $\floor{x+y}=\floor{x}+\floor{y}+\floor{\{x\}+\{y\}}$, we have
    \begin{equation}\label{floor4}
        \bfloor{\frac{a_1+\cdots + a_{l+1}}{p^i}} \geq \bfloor{\frac{A_1 +\cdots + A_{l+1} }{p^{i-j}}} + \bfloor{\frac{B_1+\cdots + B_{l+1}}{p^i}}.
    \end{equation}
    Putting together  \eqref{legendreApp1}, \eqref{floor1}, \eqref{floor2}, \eqref{floor3} and \eqref{floor4}, we find that the $p$-adic valuation of $\binom{a_1 + \cdots +a_{l+1}}{a_1, \ldots, a_{l+1}}$ is at least equal to
    \begin{align*}
        S:=&\sum_{1\leq i \leq j} \biggl( \bfloor{\frac{B_1+\cdots + B_{l+1}}{p^i}} - \sum_{k=1}^{l+1} \bfloor{\frac{B_k}{p^i}} \biggr) \\
        &+ \sum_{i>j} \biggl( \bfloor{\frac{A_1 +\cdots + A_{l+1} }{p^{i-j}}} +\bfloor{\frac{B_1+\cdots + B_{l+1}}{p^i}} - \sum_{k=1}^{l+1}  \biggl(\bfloor{\frac{A_k}{p^{i-j}}}+\bfloor{\frac{B_k}{p^i}} \biggr) \biggr).
    \end{align*}
    But, by Legendre's formula, $S$ is equal to
    \[
    v_p\biggl(\binom{A_1 + \cdots +A_{l+1}}{A_1, \ldots, A_{l+1}}\biggr)+v_p\biggl(\binom{B_1 + \cdots +B_{l+1}}{B_1, \ldots, B_{l+1}}\biggr),
    \]
    and the result follows from Kummer's theorem.
\end{proof}

Finally, note that to prove these results and the facts given in the introduction, the primality of $p$ is needed. Indeed, Legendre's formula and Kummer's theorem have been widely used, and they both require that $p$ be a prime.

\section{Integrality of multinomial Catalan numbers}\label{Section3}

Our proof of the integrality of multinomial Catalan numbers amounts to checking that the $p$-adic valuation of $C_l(n)$ is non-negative for all primes $p$. We end up proving that $C_l(n)$ is not an integer for only a finite number of integers $n\geq 1$. For these finitely many exceptional numbers, we give a necessary condition they must satisfy. The proof has two main steps. In the theorem below, we first show that $C_l(n)$ has a negative $p$-adic valuation only when $p\leq p^{v_p(n+1)}\leq l$. Then, we prove that for all pairs $(n,p)$ such that $p\leq p^{v_p(n+1)}\leq l$ and $v_p(C_l(n))<0$, the integer $n+1$ must be a power of $p$. In particular, this shows that any exceptional number $n$ satisfies $n+1 \leq l$, thus ensuring they are finitely many.

\begin{theorem}\label{Thm1}
    The number $C_l(n)$ is an integer if $v_p(n+1)\geq \log_p{(l+1)}$ for all primes $p$ such that $p\leq l$ and $p\mid n+1$.
\end{theorem}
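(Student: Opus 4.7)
My plan is to verify that $v_p(C_l(n)) \geq 0$ for every prime $p$. Writing
\[
v_p(C_l(n)) = v_p\binom{(l+1)n}{n,n,\ldots,n} - l\,v_p(n+1),
\]
the claim is immediate whenever $p \nmid n+1$, since the multinomial coefficient is an integer. So the substantive case is $p \mid n+1$; set $j := v_p(n+1) \geq 1$. A key preliminary observation I would isolate is that $p^j \geq l+1$ for \emph{every} prime $p \mid n+1$: when $p \leq l$ this is precisely the hypothesis rewritten, while when $p > l$ it follows for free from $p \geq l+1$ (both are integers) together with $j \geq 1$. Thus the theorem's hypothesis is exactly what is needed to cover the primes for which the cheap bound fails.

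The bulk of the argument then reduces to the combinatorial estimate $C_p(l;n) \geq l\,j$, since Theorem \ref{Kummersrule} identifies the left side with $v_p\binom{(l+1)n}{n,\ldots,n}$. Because $p^j \mid n+1$, I can write
\[
n = Mp^j + (p^j - 1), \qquad M := \frac{n+1}{p^j} - 1 \geq 0,
\]
and feed this decomposition into Lemma \ref{carriesSum} with $a_i = n$, $A_i = M$, $B_i = p^j - 1$ for every $i \in \llbracket 1, l+1 \rrbracket$. This yields
\[
C_p(l;n) \;\geq\; C_p(l;M) + C_p(l;p^j-1) \;\geq\; C_p(l;p^j-1).
\]
To finish, I would evaluate $C_p(l;p^j-1)$ using Section \ref{Section2}: the preliminary inequality $l+1 \leq p^j$ puts us in the regime where either Theorem \ref{ThmCarries} (if $l+1 \leq p^j-1$) or Lemma \ref{Lemma2} with $i = j$ (if $l+1 = p^j$) applies, and both alternatives return exactly $((l+1)-1)\,j = l\,j$ carries. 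Combining, $v_p\binom{(l+1)n}{n,\ldots,n} \geq l\,j = l\,v_p(n+1)$, so $v_p(C_l(n))\geq 0$.

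The hard part is less conceptual than organizational: keeping track of which hypothesis is doing the work for which prime (small primes $p\leq l$ rely on the assumption, large primes $p > l$ do not), and routing the boundary case $l+1 = p^j$ to Lemma \ref{Lemma2} rather than Theorem \ref{ThmCarries}. The degenerate sub-case $M = 0$ (that is, $n+1 = p^j$ exactly) also deserves a line but presents no obstacle: since $C_p(l;0,\ldots,0)=0$, the decomposition via Lemma \ref{carriesSum} still delivers $C_p(l;n) \geq C_p(l;p^j-1) = l\,j$, so no separate argument is required.
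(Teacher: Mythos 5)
Your proof is correct and follows essentially the same route as the paper: split off the trivial primes $p\nmid n+1$, write $n=(\lambda-1)p^j+(p^j-1)$, apply Lemma \ref{carriesSum}, and count the carries of the $(l+1)$-fold addition of $p^j-1$ via the results of Section \ref{Section2}. The only (harmless) difference is organizational: the paper treats $p\geq l+1$ separately through Lemma \ref{Lemma1}, whereas you observe that $p^j\geq l+1$ holds for every prime dividing $n+1$ and route both cases uniformly through Theorem \ref{ThmCarries} and Lemma \ref{Lemma2}, which is a slight streamlining rather than a different argument.
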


\begin{proof}
    Since multinomial coefficients are integers, $v_p(C_l(n))\geq 0$ for every $p\nmid n+1$. Hence, assume $p\mid n+1$ and let $j=v_p(n+1)$. It suffices to show that the $p$-adic valuation of the central multinomial coefficient is greater than or equal to $lj$. Let $\lambda\geq 1$ be the unique integer such that $n+1=\lambda p^j$. Then
    \[ 
    n= (\lambda-1)p^j + \sum_{i=0}^{j-1} (p-1)p^i := (\lambda-1)p^j + N.
    \]
    By Kummer's rule, the valuation of the multinomial coefficient is obtained by counting the number of carries in the $(l+1)$-fold base-$p$ addition of $n$. On the one hand, if $p\geq l+1$ then Lemma \ref{Lemma1} ensures that exactly $lj$ carries are produced in the $(l+1)$-fold base-$p$ addition of $N$. Hence, by Lemma \ref{carriesSum}, there are at least $lj$ carries in the $(l+1)$-fold base-$p$ addition of $n$. On the other hand, if $p\leq l$ and $j\geq \log_p{(l+1)}$ then $l\leq p^j-1$, and Theorem \ref{ThmCarries} and Lemma \ref{Lemma2} ensure that at least $lj$ carries are produced in the $(l+1)$-fold base-$p$ addition of $n$.
\end{proof}

Note that Theorem \ref{Thm1} is enough to recover the integrality of the usual Catalan numbers since there is no prime less than $l=1$. However, for higher values of $l$ there are still infinitely many $n\geq 1$ to study. For instance, when $l=2$, the integrality of $C_l(n)$ for positive integers $n\equiv 1$ (mod $4$) is not settled by Theorem \ref{Thm1}.

\begin{lemma}\label{Lemma4}
    Suppose $C_l(n)$ is not an integer. Then $n+1$ is a prime power and $l+1$ has at least two non-zero digits in base $n+1$.
\end{lemma}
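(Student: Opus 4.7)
The plan is to prove the contrapositive. Fix a prime $p$ with $v_p(C_l(n)) < 0$ and let $j = v_p(n+1)$; Theorem~\ref{Thm1} gives $p^j \le l$. Write $n+1 = \lambda p^j$ with $p \nmid \lambda$, set $a = \lambda - 1$ and $N = p^j - 1$, so $n = ap^j + N$ with $N < p^j$. Lemma~\ref{carriesSum} yields
\[
C_p(l; n, \ldots, n) \ge C_p(l; a, \ldots, a) + C_p(l; N, \ldots, N),
\]
and $v_p(C_l(n)) \ge 0$ is equivalent to $C_p(l; n, \ldots, n) \ge lj$.

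The crux is the identity $C_p(l; N, \ldots, N) = lj - v_p\binom{l}{q}$ with $q = \lfloor l/p^j \rfloor$. Via Legendre's formula and $s_p(N) = j(p-1)$, one has $C_p(l; N, \ldots, N) = (l+1)j - s_p((l+1)N)/(p-1)$; and since $(l+1)N + (l+1) = (l+1)p^j$, a two-term carry count gives $s_p((l+1)N)/(p-1) = v_p\binom{(l+1)p^j}{l+1}$. To compute the latter, I would expand $\binom{(l+1)p^j}{l+1} = \prod_{i=0}^l((l+1)p^j - i)/(l+1)!$ and split the sum of valuations in the numerator by whether $p^j \mid i$: non-multiples contribute $v_p(i)$, while multiples $i = mp^j$ contribute $j + v_p(l+1-m)$. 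Re-indexing collapses everything to $v_p\binom{(l+1)p^j}{l+1} = j + v_p\binom{l}{q}$, yielding the identity.

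When $\lambda \ge 2$, i.e.\ $a \ge 1$, the bound $C_p(l; a, \ldots, a) = [(l+1)s_p(a) - s_p((l+1)a)]/(p-1) \ge s_p(a) v_p((l+1)!) \ge v_p\binom{l}{q}$ follows from the elementary inequality $s_p(xy) \le s_p(x) s_p(y)$ (proved by writing $xy = \sum_{i,k} x_i y_k p^{i+k}$, applying sub-additivity of $s_p$, and using $s_p(c) \le c$), together with $v_p\binom{l}{q} \le v_p(l!) \le v_p((l+1)!)$. Combined with the central identity this gives $C_p(l; n, \ldots, n) \ge lj$, contradicting the hypothesis, so $\lambda = 1$ and $n+1 = p^j$. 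For $\lambda = 1$ the identity collapses to $v_p(C_l(n)) = -v_p\binom{l}{q}$, hence $v_p\binom{l}{q} \ge 1$; if $l+1 = dp^{jk}$ had a single non-zero digit in base $p^j$ (with $1 \le d \le p^j - 1$ and $k \ge 0$), then $\binom{(l+1)p^j}{l+1} = \binom{dp^{j(k+1)}}{dp^{jk}}$ has $p$-adic valuation exactly $j$ (a short Legendre computation using $s_p(d(p^j-1)) = j(p-1)$), so the central identity forces $v_p\binom{l}{q} = 0$, a contradiction. The main obstacle is establishing the central identity cleanly; once in hand, both conclusions drop out of routine valuation bookkeeping.
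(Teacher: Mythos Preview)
Your argument is correct and follows a genuinely different route from the paper's.

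The paper works by explicit carry counting: it expands $l+1$ in base $p^j$, uses Lemmas~\ref{Lemma1}, \ref{Lemma2} and Theorem~\ref{ThmCarries} to show that the $(l+1)$-fold addition of $N=p^j-1$ produces at least $(l+1)j-\eta j$ carries (where $\eta$ is the number of non-zero base-$p^j$ digits of $l+1$), and then uses a separate ad hoc estimate $\lfloor (l+1)/p\rfloor$ for the carries arising from the $(\lambda-1)p^j$ part, finally combining these via an inequality $Qp^{j-1}\ge \theta j$. Your approach instead runs everything through the digit-sum function $s_p$ and Legendre's formula. The key novelty is your exact identity
\[
C_p(l;N,\ldots,N)=lj-v_p\binom{l}{q},\qquad q=\lfloor l/p^j\rfloor,
\]
obtained by recognising $s_p((l+1)N)/(p-1)$ as $v_p\binom{(l+1)p^j}{l+1}$ and then proving $v_p\binom{(l+1)p^j}{l+1}=j+v_p\binom{l}{q}$ by a direct valuation computation on the product form. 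For the $(\lambda-1)$ part you invoke the submultiplicativity $s_p(xy)\le s_p(x)s_p(y)$ to get $C_p(l;a,\ldots,a)\ge s_p(a)\,v_p((l+1)!)\ge v_p\binom{l}{q}$, which closes the gap exactly. This is cleaner than the paper's two-stage estimate and has the bonus that in the case $\lambda=1$ it yields the \emph{exact} value $v_p(C_l(n))=-v_p\binom{l}{q}$, from which the second conclusion (two non-zero base-$p^j$ digits of $l+1$) follows immediately once you check $v_p\binom{dp^{j(k+1)}}{dp^{jk}}=j$ via $s_p(d(p^j-1))=j(p-1)$. The paper's approach has the advantage of reusing the structural carry lemmas already developed in Section~\ref{Section2}; yours is more self-contained and arguably more transparent, at the cost of needing the auxiliary inequality $s_p(xy)\le s_p(x)s_p(y)$ and the binomial identity. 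One small remark: in your last paragraph you write $k\ge 0$, but since $p^j\le l$ forces $l+1>p^j$, the case $k=0$ is automatically excluded; this is harmless but worth noting.
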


\begin{proof}
    Since $C_l(n)$ is not an integer, by Theorem \ref{Thm1}, there exists a prime number $p\leq l$ such that $v_p(C_l(n))<0$ and $n+1=\lambda p^j$ for some integers $1\leq j <\log_p{(l+1)} $ and $\lambda\geq 1$, $p\nmid \lambda$. By contradiction, assume that $\lambda\geq 2$. First write
    \[
    n=(\lambda-1)p^j + \sum_{i=0}^{j-1} (p-1)p^i :=(\lambda-1)p^j +N.
    \]
    Since $l+1> p^j$, we may write
    \[
    l+1 = \sum_{k=\nu}^\theta a_k p^{kj},
    \]
    in base $p^j$, where $\theta\geq \nu \geq 0$ and $0\leq a_k<p^j$ with $a_\nu, a_\theta>0$. In the decomposition
    \[
    \sum_{i=1}^{l+1} N = \sum_{ \subalign{k=\nu \\ a_k\ne 0} }^\theta \sum_{i=1}^{a_k p^{kj}} N =\sum_{ \subalign{k=\nu \\ a_k\ne 0} }^\theta \sum_{i=1}^{a_k } \sum_{s=1}^{p^{kj}} N,
    \]
    we count the carries produced in the two inner sums using Lemma \ref{Lemma2} and Theorem \ref{ThmCarries}, and obtain
    \begin{equation}\label{carries}
        \sum_{ \subalign{k=\nu \\ a_k\ne 0} }^\theta \left( a_k(p^{kj}-1)j+(a_k-1)j \right) = \sum_{ \subalign{k=\nu \\ a_k\ne 0} }^\theta (a_kp^{kj}-1)j = (l+1)j-\eta j
    \end{equation}
    carries, where $\eta$ is the number of non-zero $p^j$-ary digits of $l+1$. Note that if $l+1$ had only one non-zero $p^j$-ary digit, then we would have $lj$ carries and $C_l(n)$ would have a non-negative $p$-adic valuation, which is a contradiction. Hence $l+1$ has at least two non-zero digits in base $p^j$. Since $\lambda\geq 2$, we know that $(\lambda-1)p^j$ has at least one non-zero $p$-ary digit, say $b$. Therefore, at least one carry occurs when adding $\ceil{p/b}$ copies of $(\lambda-1)p^j$ together. By euclidean division, write $l+1=Qp^j+R$, where $0\leq R <p^j$ and $Q\geq 1$. Then, there are at least $\floor{(l+1)/p}=Qp^{j-1}+\floor{R/p}$ carries produced in the $(l+1)$-fold base-$p$ addition of $(\lambda-1)p^j$. In particular, $Q$ is either equal to
    \[
    \sum_{k=\nu}^\theta a_kp^{(k-1)j} \quad \text{or to} \quad \sum_{k=1}^\theta a_k p^{(k-1)j},
    \]
    depending on whether $\nu >0$ or $\nu = 0$ respectively. In both cases, since $l+1$ has at least two non-zero digits in base $p^j$, we have $Q\geq p^{(\theta-1)j}$ and $Qp^{j-1} \geq p^{\theta j-1} \geq \theta j$ for every $\theta, j\geq 1$. This, together with \eqref{carries} and Lemma \ref{carriesSum}, yields that at least
    \[
    (l+1)j -\eta j +Qp^{j-1} \geq lj + (\theta-\eta+1)j \geq lj
    \]
    carries are produced in the $(l+1)$-fold base-$p$ addition of $n$. Indeed, $\theta+1$ is the number of base-$p^j$ digits of $l+1$. Thus, $\theta+1\geq \eta$, the number of non-zero base-$p^j$ digits of $l+1$. This means that $v_p(C_l(n))\geq 0$, which is a contradiction. Hence $\lambda=1$, $n+1=p^j$, and $l+1$ has at least two non-zero digits in base $n+1$.
\end{proof}

\begin{theorem}\label{Thm3}
    There are at most finitely many integers $n\geq 1$ such that $C_l(n)$ is not an integer. Such integers must be of the form
    \[
    n=p^j-1,
    \]
    with $p^j\leq l$ and $l\ne a p^{kj}-1$ for every $k\geq 1$ and $a\in \llbracket 1, p-1 \rrbracket$.

\end{theorem}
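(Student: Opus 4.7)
The plan is to assemble Theorem \ref{Thm1} and Lemma \ref{Lemma4}; essentially all the substantive work has already been done. Suppose $C_l(n)$ is not an integer. The contrapositive of Theorem \ref{Thm1} provides a prime $p\leq l$ with $p\mid n+1$ and $j:=v_p(n+1)<\log_p(l+1)$, which immediately gives the bound $p^j\leq l$. Lemma \ref{Lemma4} further forces $n+1$ to be a prime power; since $p\mid n+1$, that prime power must be $p^j$, so $n=p^j-1$ with $p^j\leq l$, as required.

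To extract the condition on $l$, I would invoke the second conclusion of Lemma \ref{Lemma4}: $l+1$ has at least two non-zero digits in its base-$(n+1)=$ base-$p^j$ expansion. In particular $l+1$ cannot equal $a\cdot p^{kj}$ for any $a\in\llbracket 1,p^j-1\rrbracket$ and $k\geq 0$, since every such value has a single non-zero base-$p^j$ digit. Restricting to the sub-family $a\in\llbracket 1,p-1\rrbracket$ and $k\geq 1$ yields $l\neq ap^{kj}-1$ for all such pairs, exactly as claimed. Note the case $k=0$ is automatically excluded by the bound $p^j\leq l$ (which forces $l+1>a$ for $a<p^j$).

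Finiteness then comes for free from the bound $p^j\leq l$: every exceptional integer $n$ satisfies $n=p^j-1\leq l-1$, so there are at most as many exceptions as prime powers not exceeding $l$.

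Since the hard content is already packaged in Theorem \ref{Thm1} (a negative $p$-adic valuation forces $p^{v_p(n+1)}\leq l$) and Lemma \ref{Lemma4} (an exceptional $n+1$ must be a prime power and $l+1$ must have at least two base-$(n+1)$ digits), I do not anticipate any genuine obstacle at this step. The only verification is the one-line observation that ``two non-zero digits in base $p^j$'' already rules out every monomial $a p^{kj}$, a fortiori those with $a<p$ and $k\geq 1$; the true difficulty was the $\lambda\geq 2$ carry count in the proof of Lemma \ref{Lemma4}.
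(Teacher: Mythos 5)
Your proposal is correct and follows the paper's own route: the paper likewise derives $n=p^j-1$ with $p^j\leq l$ from Theorem \ref{Thm1} together with Lemma \ref{Lemma4}, and reads off the condition $l+1\neq ap^{kj}$ from the ``at least two non-zero base-$p^j$ digits'' conclusion, with finiteness immediate from $n\leq l-1$. Your extra remarks (why the prime power must be a power of the same $p$, and why $k=0$ is vacuous) are accurate elaborations of the same argument.
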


\begin{proof}
    By Theorem \ref{Thm1} and Lemma \ref{Lemma4}, if there exists $n\geq 1$ such that $C_l(n)$ is not an integer then $n=p^j-1$, with $p\leq l$ and $1\leq j<\log_p{(l+1)}$, that is $p^j\leq l$. Moreover, Lemma \ref{Lemma4} ensures that $l+1$ has at least two $p^j$-ary digits. In other words, $l+1$ is not of the form $ap^{kj}$ with $k\geq 1$ and $a\in \llbracket 1, p-1 \rrbracket$.
\end{proof}

\begin{remark}
    These integers are not necessarily counterexamples. Indeed, if $l=3$ and $n=1$, then $n$ has the form given in Theorem \ref{Thm3}, but $C_3(1)=3$.
\end{remark}

\begin{corollary}
    The multinomial Catalan numbers $C_l(n)$ are integers for all $n\geq l$.
\end{corollary}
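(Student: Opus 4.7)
The corollary is an almost immediate consequence of Theorem~\ref{Thm3}, so the plan is very short: invoke the necessary condition on exceptional integers and read off the bound. Specifically, suppose for contradiction that $n \geq l$ and $C_l(n)$ is not an integer. Then Theorem~\ref{Thm3} forces $n = p^j - 1$ for some prime $p$ and integer $j \geq 1$ with $p^j \leq l$. But then $n = p^j - 1 \leq l - 1 < l$, contradicting $n \geq l$.

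So the only step is to cite Theorem~\ref{Thm3} and observe that its condition $p^j \leq l$ together with $n = p^j - 1$ gives $n \leq l - 1$. There is no real obstacle here; all the work has been absorbed into the preceding results (the generalized Kummer rule, the carry counts of Lemmas~\ref{Lemma1} and~\ref{Lemma2} and Theorem~\ref{ThmCarries}, the subadditivity bound of Lemma~\ref{carriesSum}, and the structural constraints derived in Theorem~\ref{Thm1} and Lemma~\ref{Lemma4}). The contrapositive of Theorem~\ref{Thm3} directly yields the corollary, and this also recovers Theorem~\ref{ThmIntro} from the introduction.
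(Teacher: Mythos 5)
Your proof is correct and matches the paper's intent: the corollary is stated there without a written proof precisely because it follows immediately from Theorem~\ref{Thm3}, since any exceptional $n$ must equal $p^j-1$ with $p^j\leq l$, hence $n\leq l-1<l$. Nothing further is needed.
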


This last corollary is a direct generalization of the integrality of the usual Catalan numbers, which we may recover by taking $l=1$. Also note that if $l$ is a prime, then $C_l(l-1)$ is not an integer and $l-1$ is then the largest counterexample.

\section{Preliminaries to the Lucasnomial case}\label{Section4}

We highlight some useful properties and results concerning Lucas sequences. Most of them can be found in the work of Ballot \cite{Ba1,Ba2,Ba3} or in the book of Williams \cite{Wi}. Recall that a Lucas sequence $U=U(P,Q)=(U_n)_{n\geq 0}$ is a second-order linear recurrence with initial conditions $U_0=0$ and $U_1=1$, which satisfies
\[
U_{n+2}=PU_{n+1}-QU_{n},
\]
for all $n\geq 0$, where $P$ and $Q$ are non-zero integers. Also, we define $\Delta=P^2-4Q$ the discriminant of $U(P,Q)$, that is, the discriminant of the characteristic polynomial of $U$. Given the definition, all terms of a Lucas sequence are integers. Moreover, a Lucas sequence has many divisibility properties. In particular, $U$ is a {\it divisibility sequence}, i.e., for all $m,n\geq 0$, we have
\[
m\mid n \implies U_m \mid U_n.
\]
We say that $U$ is {\it non-degenerate} if $U_n\ne 0$ for all $n\geq 1$. A way of checking the non-degeneracy of $U$ is to show that $U_{12}\ne 0$, which is a sufficient condition. If $U$ is non-degenerate and $\gcd{(P,Q)}=1$, we say that $U$ is a {\it regular} Lucas sequence. Such a sequence is a strong divisibility sequence, i.e., it satisfies
\[
\gcd{(U_m,U_n)}=|U_{\gcd{(m,n)}}|,
\]
for all $m,n\geq 0$. If $U$ is regular and $\Delta=0$, we find that $U$ is either the sequence of non-negative integers or defined by $U_n=(-1)^{n-1}n$. Throughout the paper, we assume that $U$ is a $\Delta${\it -regular Lucas sequence}, that is, a regular Lucas sequence such that $\Delta\ne 0$. The second part of our paper is dedicated to studying the divisibility of the $l$-Lucasnomial coefficient
\[
\binom{(l+1)n}{n, \ldots, n}_U := \binom{(l+1)n}{n}_{U,l},
\]
by $U_{n+1}^l$, for all $n\geq 0$. Let $n!_U$ represent the {\it generalized factorial} with respect to $U$, i.e., $n!_U= U_n U_{n-1}\cdots  U_1$ for all $n\geq 1$ and $0!_U=1$. Here, $l$-Lucasnomial coefficients are defined by the formula
\[
\binom{a_1+\cdots +a_{l+1}}{a_1,\ldots,a_{l+1}}_U := \frac{(a_1+\cdots+a_{l+1})!_U}{a_1!_U \cdots a_{l+1}!_U},
\]
where the $a_i$'s are non-negative integers. Moreover, we make the assumption that $P>0$ because
\[
U(-P,Q)=(-1)^{n-1} U(P,Q),
\]
for all $n\geq 0$, implies that the $l$-Lucasnomial coefficients with respect to $U(P,Q)$ and $U(-P,Q)$ are the same up to their sign.

We start by stating and proving different facts about Lucas sequences and $l$-Lucasnomials. These include prime divisibility properties of $U$. We define the rank of appearance $\rho=\rho_U(p)$ of a prime $p$ in $U$ to be the least positive integer $n$ such that $p\mid U_n$. The condition $p\nmid Q$ guaranties the existence of $\rho$. Furthermore, the regularity of $U$ implies that $p$ divides some term of $U$ of positive index if and only if $p\nmid Q$. From now on, $p$ represents a prime not dividing $Q$ of rank $\rho$. The following proposition lists interesting and useful properties of the rank:

\begin{proposition}\label{lawsAandR}
    Let $U(P,Q)$ be a non-degenerate Lucas sequence. Then, for all positive integers $m$ and $n$, we have
    \begin{enumerate}
        \item $\rho \mid p-\legendre{\Delta}{p}$ if $p>2$, $\rho_U(2) =2$ if $2\mid \Delta$ and $\rho_U(2)=3$ if $2\nmid \Delta$;

        \item $p\mid U_n$ if and only if $\rho \mid n$;

        \item $v_p(U_{\rho m}) = v_p(U_\rho)+x + \delta_x$,
    \end{enumerate}
    where $\legendre{*}{*}$ is the Legendre symbol, $x=v_p(m)$,
    \[
    \delta := v_p\biggl(\frac{P^2-3Q}{2}\biggr)\cdot [p=2] \cdot [2\nmid P],
    \]
    and $\delta_x=\delta \cdot [x>0]$.
\end{proposition}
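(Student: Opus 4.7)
The plan is to work with the roots $\alpha, \beta$ of the characteristic polynomial $x^2 - Px + Q$ (so $\alpha + \beta = P$, $\alpha\beta = Q$, $\alpha - \beta = \sqrt{\Delta}$), using the Binet formula $U_n = (\alpha^n - \beta^n)/(\alpha - \beta)$ in $\mathbb{Z}[\alpha]$ (or its $p$-adic completion), and to reduce modulo a prime above $p$. For statement (1), I split on the value of $\legendre{\Delta}{p}$. When $p > 2$ and $p \nmid \Delta$ (so $\alpha \ne \beta$ mod $p$, both nonzero since $\alpha\beta = Q$ and $p \nmid Q$): if $\legendre{\Delta}{p} = 1$ then $\alpha, \beta \in \mathbb{F}_p$ and Fermat's little theorem yields $\alpha^{p-1} = \beta^{p-1} = 1$, so $p \mid U_{p-1}$; if $\legendre{\Delta}{p} = -1$, then $\alpha, \beta \in \mathbb{F}_{p^2}$ are Frobenius-conjugate ($\beta = \alpha^p$ follows from Euler's criterion applied to $\sqrt{\Delta}$), so $\alpha^{p+1} = \alpha\beta = Q = \beta^{p+1}$ and $p \mid U_{p+1}$. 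If $p \mid \Delta$ with $p$ odd, $\alpha \equiv \beta$ mod $p$ makes $p \mid U_p$, consistent with $\rho \mid p$. For $p = 2$, since $2 \nmid Q$ one has $Q$ odd, and direct computation of $U_2 = P$ and $U_3 = P^2 - Q$ settles the two subcases.

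Statement (2) follows immediately from the strong divisibility $\gcd(U_m, U_n) = |U_{\gcd(m,n)}|$ enjoyed by regular Lucas sequences: if $p \mid U_n$ then $p \mid \gcd(U_n, U_\rho) = |U_{\gcd(n,\rho)}|$, which by the minimality of $\rho$ forces $\gcd(n, \rho) = \rho$, i.e., $\rho \mid n$; the converse is the divisibility sequence property $U_\rho \mid U_{\rho m}$.

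For (3), I would induct on $x = v_p(m)$. The base case $x = 0$ (with $p \nmid m$) uses the identity
\[
\frac{U_{\rho m}}{U_\rho} = \sum_{k=0}^{m-1} \alpha^{k\rho} \beta^{(m-1-k)\rho}.
\]
From $p \mid U_\rho$ one gets $\alpha^\rho \equiv \beta^\rho$ to order $v_p(U_\rho)$ in $\mathbb{Z}_p[\alpha]$, and since $\alpha^{(m-1)\rho}$ is a $p$-adic unit ($p \nmid Q$), the $m$ summands are each $\equiv \alpha^{(m-1)\rho}$ modulo one higher power, making the sum $\equiv m \alpha^{(m-1)\rho}$. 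Thus $v_p(U_{\rho m}/U_\rho) = v_p(m) = 0$. The inductive step splits into an odd-prime step (applied to $n \mapsto pn$ via the analogous $p$-term identity $U_{pn}/U_n = \sum_{k=0}^{p-1} \alpha^{kn}\beta^{(p-1-k)n}$, giving $v_p(U_{pn}) = v_p(U_n) + 1$ for odd $p$) and a doubling step via $U_{2n} = U_n V_n$.

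The main obstacle is $p = 2$ with $P$ odd (so $\rho = 3$), where the doubling step carries a hidden extra $2$-adic contribution. Here $\alpha - \beta = \sqrt{\Delta}$ is a $2$-adic unit, so $v_2(\alpha^n - \beta^n) = v_2(U_n)$ throughout, and the first doubling $n = 3 \mapsto 2n = 6$ brings in the factor $V_3 = P(P^2 - 3Q)$. With $P, Q$ odd one has $P^2 - 3Q \equiv -2 \pmod 4$, so $v_2(V_3) = 1 + v_2\!\big((P^2 - 3Q)/2\big) = 1 + \delta$, accounting for the extra $\delta$. Every subsequent doubling uses $V_{2n} = V_n^2 - 2Q^n$ with $V_n$ already even, giving $V_{2n} \equiv -2Q^n \pmod 4$ and hence $v_2(V_{2n}) = 1$ exactly — so the extra $\delta$ appears once and only once, precisely when $x \ge 1$, matching $\delta_x = \delta \cdot [x > 0]$. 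Interleaving this refined doubling accounting with the clean odd-prime step completes the induction and yields the stated formula.
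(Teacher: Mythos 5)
The paper itself offers no proof of Proposition \ref{lawsAandR}: it is quoted as a package of classical facts from Ballot and Williams (parts (1)--(2) are the standard rank laws, part (3) is the law of repetition, i.e., lifting the exponent for Lucas sequences), so your Binet-formula argument is necessarily your own route. Parts (1) and (2) are fine (for (2) you invoke the strong gcd property, which the paper guarantees only for \emph{regular} sequences; this is harmless here since $U$ is $\Delta$-regular throughout and $p\nmid Q$). The base case of (3) and the odd-prime step are also sound: writing $\alpha^{n}=\beta^{n}(1+\epsilon)$ with $v(\epsilon)>0$, the $m$-term (resp.\ $p$-term) sum is $m$ (resp.\ $p$) times a $p$-adic unit up to terms of strictly larger valuation, and for $p\ge 3$ the error terms genuinely have valuation $>1$.

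The gap is in the $p=2$, $2\nmid P$ inductive step. First, the assertion $P^{2}-3Q\equiv-2\pmod 4$ is false in general: for $Q\equiv 3\pmod 4$ one has $4\mid P^{2}-3Q$, which is exactly the situation $\delta\ge 1$; the conclusion $v_{2}(V_{3})=1+\delta$ survives because $P^{2}-3Q$ is at least even, but the stated congruence would force $\delta=0$. More seriously, since you induct on $x=v_{2}(m)$ while carrying the odd part $m'$ along, the first doubling is $U_{6m'}=U_{3m'}V_{3m'}$ for a general odd $m'$, not $U_{6}=U_{3}V_{3}$; you therefore need $v_{2}(V_{3m'})=1+\delta$ for every odd $m'$, which you neither state nor prove, and which is not immediate when $\delta>0$ (for $3\nmid m'$ it amounts to $v_{2}\bigl(V_{m'}^{2}-3Q^{m'}\bigr)=v_{2}(P^{2}-3Q)$, a statement of the same lifting-the-exponent nature as the lemma itself). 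The repair is easy with your own tools: either perform all doublings first along $3\to 6\to\cdots\to 3\cdot 2^{x}$, so that only $V_{3}$ and $V_{3\cdot 2^{j}}$, $j\ge 1$, occur (the latter with $v_{2}=1$ by $V_{2n}=V_{n}^{2}-2Q^{n}$ and $V_{n}$ even), and then apply your base-case computation, which uses only $p\mid U_{n}$ and $p\nmid m'$ and hence works from the index $3\cdot 2^{x}$ just as well as from $\rho$; or prove the $V$-analogue of that computation, namely $v_{2}(V_{nm'})=v_{2}(V_{n})$ for odd $m'$ when $V_{n}$ is even, via $\alpha^{n}\equiv-\beta^{n}$. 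You should also add a word on the subcase $2\mid P$ (so $2\mid\Delta$, $\rho=2$, $\delta=0$), which your write-up skips: there every $V_{n}$ is even, so each doubling contributes exactly $1$ and the formula follows.
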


Let $\mathcal{P}$ be a statement. In Proposition \ref{lawsAandR}, we used the notation $[\mathcal{P}]$. It is called the {\it Iverson symbol} and is defined by
\[
[\mathcal{P}] = 
\begin{cases}
    1, & \text{if $\mathcal{P}$ is true;} \\
    0, & \text{if not.}
\end{cases}
\]

In order to decide the integrality of $l$-Lucasnomial Catalan numbers, we approach the problem $p$-adically just as we did for the usual multinomial Catalan numbers. We proceed to state a Lucasnomial version of Kummer's theorem \cite{Ba2}.

\begin{theorem}[Kummer's rule for Lucasnomials]
    Let $U(P,Q)$ be a non-degenerate Lucas sequence and $m$ and $n$ be positive integers. Then, the $p$-adic valuation of the Lucasnomial coefficient $\binom{m+n}{n}_U$ is equal to $v_p(U_\rho)$ when the proposition
    \[
    \mathcal{P} : \bigg\{ \frac{m}{\rho} \bigg\} + \bigg\{ \frac{n}{\rho} \bigg\} \geq 1,
    \]
    is true, plus the number of carries in the base-$p$ parenthesized addition 
    \[
    \bfloor{ \frac{m}{\rho} } + \bfloor{\frac{n}{\rho} } +[\mathcal{P}],
    \]
    plus $\delta$ when a carry occurs from the first to the second digit, where $\delta$ is the integer defined in Proposition \ref{lawsAandR}.
\end{theorem}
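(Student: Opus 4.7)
The plan is to derive a Legendre-type formula for $v_p(n!_U)$ and then take differences, mirroring the Legendre-to-Kummer mechanism used for ordinary multinomial coefficients. Since $p \nmid Q$, the terms $U_k$ divisible by $p$ are exactly those with $\rho \mid k$, so with $M := \floor{n/\rho}$ one has $v_p(n!_U) = \sum_{k=1}^{M} v_p(U_{k\rho})$. Part~(3) of Proposition~\ref{lawsAandR} gives $v_p(U_{k\rho}) = v_p(U_\rho) + v_p(k) + \delta_{v_p(k)}$, and since $\delta_{v_p(k)}$ equals $\delta$ when $p \mid k$ and $0$ otherwise, summation yields the identity
\[
v_p(n!_U) \;=\; M\, v_p(U_\rho) \;+\; v_p(M!) \;+\; \delta \floor{M/p}.
\]

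Setting $M_1 = \floor{m/\rho}$, $M_2 = \floor{n/\rho}$, $M_3 = \floor{(m+n)/\rho}$, the definition of $\mathcal{P}$ gives $M_3 = M_1 + M_2 + [\mathcal{P}]$. Applying the identity above to $m!_U$, $n!_U$, and $(m+n)!_U$ and subtracting, I obtain
\[
v_p\!\left( \binom{m+n}{n}_U \right) = [\mathcal{P}]\, v_p(U_\rho) + \bigl( v_p(M_3!) - v_p(M_1!) - v_p(M_2!) \bigr) + \delta \bigl( \floor{M_3/p} - \floor{M_1/p} - \floor{M_2/p} \bigr).
\]
The middle term is the $p$-adic valuation of the ordinary multinomial coefficient $\binom{M_1+M_2+[\mathcal{P}]}{M_1,\, M_2,\, [\mathcal{P}]}$, and by the generalized Kummer rule (Theorem~\ref{Kummersrule}) it counts the carries in the parenthesized base-$p$ addition $M_1 + M_2 + [\mathcal{P}]$. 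This matches the second piece of the statement exactly.

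It remains to identify the last term with $\delta$ times the indicator of a carry from digit $0$ to digit $1$. Writing $M_i \equiv r_i \pmod{p}$ with $r_i \in \llbracket 0, p-1 \rrbracket$, and using the three-variable analogue $\floor{x+y+z} = \floor{x}+\floor{y}+\floor{z}+\floor{\{x\}+\{y\}+\{z\}}$ of the identity already invoked in Lemma~\ref{carriesSum}, a direct computation gives
\[
\floor{M_3/p} - \floor{M_1/p} - \floor{M_2/p} \;=\; \bfloor{\frac{r_1+r_2+[\mathcal{P}]}{p}},
\]
which equals $1$ precisely when the unit-digit sum $r_1+r_2+[\mathcal{P}]$ overflows, i.e., when a carry propagates from the first to the second digit of $M_1+M_2+[\mathcal{P}]$. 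Adding the three pieces yields the claimed formula.

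The main obstacle is recognizing that the $\delta_{v_p(k)}$ correction from Proposition~\ref{lawsAandR}, which fires only when $p \mid k$, aggregates upon summation to yield precisely the indicator of the unit-digit carry, matching the geometric condition stated in the theorem. Once this bookkeeping is in place, the argument is the standard Legendre-to-Kummer translation, now carried out in the Lucasnomial setting; non-degeneracy of $U$ guarantees that every $U_k$ with $k \geq 1$ is non-zero, so that all $p$-adic valuations are well defined.
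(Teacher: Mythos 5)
Your proposal is correct. Note that the paper does not prove this statement at all: it is quoted as a known result of Ballot \cite{Ba2} (with the general mechanism going back to Knuth and Wilf \cite{KW}), so there is no internal proof to compare against. Your argument supplies a legitimate self-contained derivation, and it is essentially the standard Legendre-to-Kummer translation that the cited proofs also use: the identity $v_p(n!_U) = M\,v_p(U_\rho) + v_p(M!) + \delta\lfloor M/p\rfloor$ with $M=\lfloor n/\rho\rfloor$ follows correctly from parts (2) and (3) of Proposition \ref{lawsAandR}, the relation $M_3 = M_1+M_2+[\mathcal{P}]$ is right, and the three resulting difference terms match the three pieces of the statement. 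Two small points you leave implicit but which are easily checked: (i) when $[\mathcal{P}]=0$ or one of $M_1,M_2$ vanishes, the generalized Kummer rule as stated in the paper (for positive summands) degenerates to the binomial case or to a triviality, and adding a zero creates no carries, so the identification of the middle term still holds; (ii) your identification of $\lfloor M_3/p\rfloor-\lfloor M_1/p\rfloor-\lfloor M_2/p\rfloor=\lfloor (r_1+r_2+[\mathcal{P}])/p\rfloor$ with the indicator of a first-to-second-digit carry is valid because in any parenthesization of $M_1+M_2+[\mathcal{P}]$ at most one unit-digit carry can occur (since $r_1+r_2\le 2p-2$), so that indicator is well defined and equals $1$ exactly when $r_1+r_2+[\mathcal{P}]\ge p$. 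With those remarks spelled out, the proof is complete and correct.
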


Note that this result can be generalized to other kinds of sequences \cite{KW} using a method similar to the classical Kummer case. For $l$-Lucasnomials, we will not use a Lucasnomial version of Theorem \ref{Kummersrule}, but only the following corollary:

\begin{corollary}\label{TLucMultiVal}
    The $p$-adic valuation of the $l$-Lucasnomial coefficient $\binom{ a_1+\cdots + a_{l+1}}{a_1, \ldots, a_{l+1} }_U$ is equal to the sum
    \[
    \sum_{i=1}^{l} v_p\biggl( \binom{ a_1+\cdots + a_{i+1}}{a_{i+1} }_U\biggr),
    \]
    where the $a_i$'s are positive integers.
\end{corollary}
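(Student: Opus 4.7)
The plan is to deduce Corollary \ref{TLucMultiVal} from the Lucasnomial analog of the identity \eqref{IdenIntro}, rather than by invoking the Lucasnomial Kummer rule directly. Concretely, I would first establish the multiplicative decomposition
\[
\binom{a_1+\cdots+a_{l+1}}{a_1,\ldots,a_{l+1}}_U = \prod_{i=1}^{l} \binom{a_1+\cdots+a_{i+1}}{a_{i+1}}_U,
\]
and then apply $v_p$ to both sides, using its additivity on products of nonzero rationals.

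To verify the identity, I would expand the right-hand side with the defining formula $\binom{m}{n}_U = m!_U / \bigl(n!_U \, (m-n)!_U\bigr)$, producing
\[
\prod_{i=1}^{l} \frac{(a_1+\cdots+a_{i+1})!_U}{a_{i+1}!_U \, (a_1+\cdots+a_i)!_U}.
\]
For each $k\in\llbracket 2, l\rrbracket$, the factorial $(a_1+\cdots+a_k)!_U$ appears once in a numerator (at $i=k-1$) and once in a denominator (at $i=k$); these cancel pairwise. What survives is $(a_1+\cdots+a_{l+1})!_U$ in the numerator and $a_1!_U \cdot a_2!_U \cdots a_{l+1}!_U$ in the denominator, which matches the definition of the $l$-Lucasnomial coefficient on the left. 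Taking $v_p$ of this identity yields exactly the claimed sum.

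There is essentially no obstacle here; the argument is a formal telescoping the same one that underlies \eqref{IdenIntro} in the classical multinomial case. The hypotheses carried over from Section \ref{Section4} (that $U$ is non-degenerate and $\Delta$-regular) ensure every $n!_U$ involved is a nonzero integer, so each rational factor is well-defined and $v_p$ is additive over the product. The only minor bookkeeping is indexing: the product runs only from $i=1$ to $l$ because the would-be $i=0$ factor $\binom{a_1}{a_1}_U$ equals $1$ and contributes nothing to either side.
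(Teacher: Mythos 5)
Your proposal is correct and matches the paper's own argument: the paper likewise establishes the factorization $\binom{a_1+\cdots+a_{l+1}}{a_1,\ldots,a_{l+1}}_U = \prod_{i=1}^{l} \binom{a_1+\cdots+a_{i+1}}{a_{i+1}}_U$ (via the same telescoping of generalized factorials, phrased as an iterated two-term recurrence) and then takes $p$-adic valuations. No substantive difference.
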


\begin{proof}
    It suffices to note that for every $l\geq 2$, we have
    \begin{align*}
        \binom{ a_1+\cdots + a_{l+1}}{a_1, \ldots, a_{l+1} }_{U} &= \frac{(a_1+\cdots +a_{l+1})!_U}{a_1!_U \cdots a_{l+1}!_U} \\
        &=  \frac{(a_1+\cdots +a_{l+1})!_U}{a_{l+1}!_U \cdot (a_1+\cdots+ a_l)!_U} \cdot \frac{(a_1+\cdots+ a_l)!_U}{a_1!_U \cdots a_l!_U } \\
        &= \binom{ a_1+\cdots + a_{l+1}}{a_{l+1}}_{U} \cdot \binom{ a_1+\cdots + a_l}{a_1, \ldots, a_l }_{U}.
    \end{align*}
    Applying this recurrence relation $(l-1)$-times, we obtain
    \[
    \binom{ a_1+\cdots + a_{l+1}}{a_1, \ldots, a_{l+1} }_{U} = \prod_{i=1}^l \binom{a_1+\cdots + a_{i+1}}{a_{i+1}}_U,
    \]
    and the result follows by taking $p$-adic valuations.
\end{proof}

Throughout the rest of the paper, given $U$ and $l$, the sequence $C_{U,l}=\big(C_{U,l}(n)\big)_{n\geq 1}$ is called a {\it multi-Catalan sequence} if for all but finitely many integers $n\geq 1$, the numbers
\[
C_{U,l}(n) = \frac{1}{U_{n+1}^l} \binom{(l+1)n}{n}_{U,l}
\]
are integers. As stated briefly in the introduction, only a few pairs $(U,l)$ satisfy this condition. To find infinite families of integers $n\geq 1$ such that $C_{U,l}(n)$ does not belong to $\mathbb{Z}$, we mostly rely on a result of Bilu, Hanrot, and Voutier \cite{Bi} on $\Delta$-regular Lucas sequences. For $k\geq1$ an integer, we say that $U$ is $k$-{\it defective} if $U_k$ has no primitive prime divisor, i.e., there is no prime $p\nmid \Delta$ of rank $k$ in $U$. They proved the following theorem:

\begin{theorem}[Primitive divisor theorem]
    The sequence $U$ is not $n$-defective for all $n>30$.
\end{theorem}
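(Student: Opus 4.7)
The plan is to work through the Binet representation $U_n = (\alpha^n - \beta^n)/(\alpha-\beta)$, where $\alpha,\beta$ are the roots of $x^2-Px+Q$, and to analyze primitive divisors via the homogeneous cyclotomic polynomials
\[
\Phi_n(\alpha,\beta) = \prod_{\substack{1\leq k\leq n \\ \gcd(k,n)=1}} (\alpha - \zeta_n^k \beta),
\]
where $\zeta_n$ is a primitive $n$-th root of unity. The key identity $U_n = \prod_{d\mid n,\, d>1} \Phi_d(\alpha,\beta)$ (valid up to units in the ring of integers generated by $\alpha,\beta$) reduces the problem to understanding $\Phi_n$. A standard Möbius-inversion argument shows that any prime $p$ dividing $\Phi_n(\alpha,\beta)$ but not $n\Delta$ has rank exactly $n$ in $U$, hence is a primitive divisor of $U_n$. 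Therefore if $U$ is $n$-defective, every rational prime dividing $\Phi_n(\alpha,\beta)$ must divide either $n$ or $\Delta$, giving an arithmetic upper bound
\[
|\Phi_n(\alpha,\beta)| \leq C_\Delta \cdot n,
\]
for an explicit constant depending on $\Delta$.

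Next I would extract a lower bound for $|\Phi_n(\alpha,\beta)|$. Since $\Phi_n(\alpha,\beta) = \alpha^{\phi(n)} \prod_k(1-\zeta_n^k \beta/\alpha)$, and $|\beta/\alpha|<1$ in the generic case $|\alpha|>|\beta|$, the product is close to $1$ unless some factor $1-\zeta_n^k(\beta/\alpha)$ is extraordinarily small. Controlling that smallness is exactly where one invokes effective lower bounds on linear forms in two logarithms (Laurent--Mignotte--Nesterenko), which yield
\[
\bigl|\Phi_n(\alpha,\beta)\bigr| \gg |\alpha|^{\phi(n)/2},
\]
up to polynomial factors in $n$. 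Comparing with the arithmetic upper bound gives an effective explicit threshold $n \leq N_0$ beyond which $U_n$ must have a primitive divisor. This is the route by which Stewart, then Voutier, and finally Bilu--Hanrot--Voutier pushed $N_0$ progressively downward.

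The second phase is the unavoidable finite check for $n\leq N_0$. For each such $n$, being $n$-defective translates into a Diophantine equation for $(P,Q)$: one must have $\Phi_n(\alpha,\beta)$ supported on primes dividing $n\Delta$. After normalizing, this reduces to finitely many Thue equations, $S$-unit equations, or (in small $n$) parametric curves. I would enumerate solutions using continued-fraction/LLL reductions, tabulate the resulting sporadic defective sequences, and verify that each occurs only for some $n\leq 30$. The catalogue of these sporadic cases is precisely Tables 1--4 of \cite{Bi}.

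The main obstacle is unambiguously the last phase. The theoretical skeleton (Binet formula, cyclotomic factorization, linear forms in logarithms) cleanly produces an effective bound on $n$, but proving the sharp constant $30$ rather than some much larger threshold requires the intricate sieve and the systematic computer-assisted Thue-equation attack performed in \cite{Bi}; any self-contained exposition would inherit that substantial computational component, and no purely conceptual simplification is known.
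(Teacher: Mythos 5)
This statement is not proved in the paper at all: it is the Bilu--Hanrot--Voutier primitive divisor theorem, imported verbatim with the citation \cite{Bi} (as is the accompanying classification of $n$-defective sequences in Table \ref{TableA}). So there is no internal proof to compare against; the paper treats the result as a black box, exactly as it treats Kummer's rule and its Lucasnomial analogue.

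Your outline is a fair description of how the theorem is actually proved in the literature (cyclotomic factorization $U_n=\prod_{d\mid n,\,d>1}\Phi_d(\alpha,\beta)$, the lemma that a prime dividing $\Phi_n(\alpha,\beta)$ without having rank $n$ must divide $n$ or $\Delta$, lower bounds on $|\Phi_n(\alpha,\beta)|$ via linear forms in two logarithms, then a finite computational verification through Thue and $S$-unit equations). But as a proof it has a genuine gap, which you yourself concede: every quantitative step is asserted rather than carried out. The arithmetic upper bound in the defective case needs control not just of which primes can divide $\Phi_n(\alpha,\beta)$ but of the exponents to which they occur (the standard statement is that a non-primitive prime divisor of $\Phi_n$ is essentially the largest prime factor of $n$ and divides it to the first power for $n>6$); the lower bound $\gg|\alpha|^{\phi(n)/2}$ requires the Laurent--Mignotte--Nesterenko estimates with explicit constants; and, above all, the sharp threshold $30$ --- which is the entire content of the statement as used in Section \ref{Section6}, e.g.\ in the proof of Theorem \ref{TheTHM} for $l>30$ --- comes only from the heavy computational phase of \cite{Bi}. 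Deferring that phase to the reference is legitimate, but then your text is a summary of \cite{Bi} rather than an independent proof, which puts it on the same footing as the paper's own treatment: cite the theorem and move on.
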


In addition to this strong result, all $n$-defective $\Delta$-regular Lucas sequences have been found for every positive integer $n\leq 30$, we list them in Table \ref{TableA}. We proceed to prove two preliminary lemmas that determine the $p$-adic valuation of some Lucasnomial coefficients.

\begin{lemma}\label{valuationlucasnomials}
    Let $n=\lambda \rho p^j-1$, $p\nmid \lambda$, $j\geq 0$, and $i\in \llbracket 1,\rho \rrbracket$. Then
    \[
    v_p\biggl(\binom{ (i+1)n}{n }_U \biggr) = j+\delta_j+v_p(U_\rho)\cdot [i<\rho] + v_p(\Lambda_i) \cdot [j>0 \text{ or } i<\rho] +c_i,
    \]
    where $c_i$ is the number of carries produced in the base-$p$ addition of $(\lambda-1)$ and $(\lambda i-1)$, and $\Lambda_i=\lambda (i+1)-1$.
\end{lemma}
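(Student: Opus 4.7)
The plan is to apply Kummer's rule for Lucasnomials directly to $\binom{(i+1)n}{n}_U$ with $m = in$, and then evaluate the three ingredients produced by the rule.

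First I would compute the floors and fractional parts that appear in the rule. From $n = \lambda\rho p^j - 1$ one gets $\floor{n/\rho} = \lambda p^j - 1$ and $\{n/\rho\} = (\rho-1)/\rho$. For $in/\rho$ the answer depends on whether $i < \rho$ (giving $\floor{in/\rho} = i\lambda p^j - 1$ and $\{in/\rho\} = (\rho-i)/\rho$) or $i = \rho$ (giving the integer $\lambda\rho p^j - 1$ with zero fractional part). Summing the fractional parts in each case shows that $\mathcal{P}$ holds precisely when $i < \rho$, i.e.\ $[\mathcal{P}] = [i < \rho]$, which accounts for the factor $v_p(U_\rho)\cdot[i < \rho]$ in the statement.

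Next I would count the carries in the parenthesized addition $\floor{in/\rho} + \floor{n/\rho} + [\mathcal{P}]$, grouping $\lambda p^j - 1$ with $[\mathcal{P}]$ first. When $i < \rho$, the inner sub-addition $(\lambda p^j - 1) + 1 = \lambda p^j$ yields exactly $j$ carries; the cascade halts at position $j$ because $p \nmid \lambda$. The outer sub-addition $\lambda p^j + (i\lambda p^j - 1)$ produces no carries below position $j$ (zero against $p-1$), and at positions $\geq j$ is a shift of the addition $\lambda + (i\lambda - 1) = \Lambda_i$. The algebraic identity $\binom{\Lambda_i}{\lambda} = (\Lambda_i/\lambda)\binom{\Lambda_i - 1}{\lambda - 1}$, combined with $p \nmid \lambda$ and classical Kummer, rewrites this outer contribution as $v_p(\Lambda_i) + c_i$ carries, for a total of $j + v_p(\Lambda_i) + c_i$. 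When $i = \rho$ the $[\mathcal{P}] = 0$ summand disappears; the analogous decomposition gives $j + v_p(\Lambda_\rho) + c_\rho$ for $j \geq 1$, while the two-term addition $(\rho\lambda - 1) + (\lambda - 1) = \Lambda_\rho - 1$ contributes exactly $c_\rho$ for $j = 0$, which is precisely the content of the indicator $[j > 0 \text{ or } i < \rho]$ attached to $v_p(\Lambda_i)$.

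Finally I would handle the $\delta$ correction from Kummer's Lucasnomial rule. Since $\delta$ is nonzero only when $p = 2$ and $P$ is odd, Proposition \ref{lawsAandR} pins down $\rho = 3$, so $\lambda$ odd (from $p \nmid \lambda$) forces definite parities on the low digits of the summands; an inspection of the position-$0$ digits in the chosen parenthesization should show that the criterion ``carry from the first to the second digit'' is equivalent to $j > 0$, collapsing the correction to $\delta \cdot [j > 0] = \delta_j$. The main obstacle I anticipate is precisely this last step: the $\delta$ bookkeeping is sensitive to the parenthesization, and matching it cleanly to $\delta_j$ requires using the rank constraints from Proposition \ref{lawsAandR} together with the parity of $\lambda$. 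The rest of the argument is a routine shift-decomposition of a carry count built on classical Kummer and the single binomial identity above.
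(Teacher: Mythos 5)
Your route is the same as the paper's: apply the Lucasnomial Kummer rule to $\binom{(i+1)n}{n}_U$ with $m=in$, use $\{n/\rho\}+\{in/\rho\}=(2\rho-i-1)/\rho$ to get $[\mathcal{P}]=[i<\rho]$ and hence the term $v_p(U_\rho)\cdot[i<\rho]$, and then count carries in the addition $\floor{in/\rho}+\floor{n/\rho}+[\mathcal{P}]$. Your carry bookkeeping is only a mild variant: you group $(\lambda p^j-1)+[\mathcal{P}]$ first and convert the block above position $j$ into $v_p(\Lambda_i)+c_i$ via the identity $\binom{\Lambda_i}{\lambda}=\frac{\Lambda_i}{\lambda}\binom{\Lambda_i-1}{\lambda-1}$ together with classical Kummer, whereas the paper groups $\bigl((\lambda i-1)p^j+(\lambda-1)p^j\bigr)+p^j$ and counts digits directly; by associativity of the carry count both give $j+v_p(\Lambda_i)+c_i$ (and the analogous counts for $i=\rho$), so this part is correct and your identity is, if anything, a cleaner way to see it.

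The step you yourself flag as the obstacle is the one that does not close as you state it. The criterion ``a carry occurs from the first to the second digit'' is \emph{not} equivalent to $j>0$: when $j=0$ the units digits of $(\lambda i-1)+(\lambda-1)+[i<\rho]$ can produce a carry on their own. Since $\delta\ne 0$ forces $p=2$, $2\nmid P$, hence $\rho=3$, take $\lambda$ odd, $j=0$, $i=2$: the units digits are $1+0+1$, so a first-to-second-digit carry occurs although $j=0$. For $U(1,-1)$ and $\lambda=1$ this is $\binom{6}{2}_U=40$ with $v_2=3$, while $j+\delta_j+v_2(U_3)+v_2(\Lambda_2)+c_2=2$; the missing $1$ is exactly the weight $\delta$ that the factor $\delta_j=\delta\cdot[j>0]$ drops. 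To be fair, the paper's own proof is loose at precisely the same spot (after extracting the weight $j+\delta_j$ it counts the leftover carries unweighted, silently ignoring a possible first-to-second-digit carry when $j=0$), and since $\delta=0$ unless $p=2$ and $P$ is odd, the omission only ever increases the valuation and does not harm the later integrality applications. But as a blind proof of the displayed equality, your final step as sketched fails in this corner; you would need either to assume $j\geq 1$ or $p$ odd, or to add the corresponding $\delta$-contribution for the configuration $p=2$, $j=0$, $i$ even, $i<\rho$.
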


\begin{proof}
    First, write
    \[
    \frac{n}{\rho} = \lambda p^j -1 + \frac{\rho-1}{\rho} := (\lambda-1)p^j + N + \frac{\rho-1}{\rho},
    \]
    where $N=\textstyle \sum_{k=0}^{j-1} (p-1)p^k$. Note that
    \[
    \frac{in}{\rho} = (\lambda i-1)p^j + N + \frac{\rho-i}{\rho},
    \]
    for every $1\leq i \leq \rho$. Let us apply Kummer's rule for Lucasnomials. When adding the fractional parts, we obtain
    \[
    \bigg\{ \frac{n}{\rho} \bigg\} + \bigg\{ \frac{in}{\rho} \bigg\} = \frac{2\rho -i-1}{\rho} =
    \begin{cases}
        1 + \frac{\rho-i-1}{\rho}, & \text{if $1\leq i <\rho$;} \\
        \frac{\rho-1}{\rho}, & \text{if $i=\rho$.}
    \end{cases}
    \]
    Thus, when $i<\rho$, a carry with weight $v_p(U_\rho)$ occurs across the radix point. Given the digits of $N$, this carry generates at least $j$ other carries with total weight $j+\delta_j$. When $i=\rho$, no carry occurs across the radix point, but if $j>0$ then the first digits of the integer parts, $\floor{n/\rho}$ and $\floor{in/\rho}$, being $p-1$ and $p-1$, we obtain one carry with weight $1+\delta_j$, which generates at least $j-1$ more carries. Therefore, we are left with counting carries produced in the parenthesized base-$p$ addition
    \[
    (\lambda i-1) p^j + (\lambda-1)p^j + [j>0 \text{ or } i<\rho] \cdot p^j,
    \]
    or equivalently, counting carries in the addition $(\lambda i-1) p^j + (\lambda-1)p^j$, plus the number of carries in the addition $(\lambda(i+1)-2)p^j + p^j$ when $i<\rho$ or $j>0$. The latter produces exactly $v_p(\Lambda_i)$ carries. 
\end{proof}

\begin{lemma}\label{valuationlucasnomials2}
    Let $n=\lambda \rho p^j-1$, $p\nmid \lambda$, $j\geq 0$, and $i\in \llbracket \rho+1,2\rho\rrbracket$. Put $ \Lambda_{i,j}=\lambda(i+1)-1-[j=0]$. If $\rho < i < 2\rho$, then
    \[
    v_p\biggl(\binom{(i+1)n}{n}_U\biggr) =j+\delta_j +v_p(U_\rho)+v_p(\Lambda_{i,j})+c_i,
    \]
    with $c_i$ the number of carries produced in the base-$p$ addition of $(\lambda-1)$ and $(\lambda i-1-[j=0])$. If $i=2\rho$, then
    \[
    v_p\biggl(\binom{ (i+1)n}{n }_U \biggr)=c_i+
    \begin{cases}
        j+  v_p(\Lambda_{i,j}) \cdot  [j>0], & \text{if $p>2$;}\\
        j+v_2(\Lambda_{i,j})-1, & \text{if $p=2$ and $j>1$;} \\
        0, & \text{if $p=2$ and $j\in \{0,1\}$,}
    \end{cases}
    \]
    with $c_i$ the number of carries produced in the base-$p$ addition of $(\lambda-1)$ and $(\lambda i-1-[j=0])$.
\end{lemma}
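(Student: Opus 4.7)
The plan is to apply Kummer's rule for Lucasnomials (stated just before the lemma) to the coefficient $\binom{(i+1)n}{n}_U = \binom{in+n}{n}_U$, in direct parallel with the proof of Lemma \ref{valuationlucasnomials}. The two regimes $\rho < i < 2\rho$ and $i = 2\rho$ are separated by whether the fractional parts of $n/\rho$ and $in/\rho$ sum to one or more, and the subcase $p=2$, $i=2\rho$ requires a small extra adjustment.

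First I would write $\lfloor n/\rho \rfloor = \lambda p^j - 1$ with $\{n/\rho\} = (\rho-1)/\rho$, and, for $\rho < i < 2\rho$ (setting $i = \rho + i'$ with $1 \leq i' \leq \rho-1$), $\lfloor in/\rho \rfloor = i\lambda p^j - 2$ with $\{in/\rho\} = (2\rho - i)/\rho$. When $i = 2\rho$, $in/\rho = 2n$ is an integer, so $\{in/\rho\} = 0$. A quick inspection then shows $\{n/\rho\} + \{in/\rho\} = (3\rho - i - 1)/\rho \geq 1$ in the first regime and equals $(\rho - 1)/\rho < 1$ in the second, so $[\mathcal{P}] = 1$ precisely when $\rho < i < 2\rho$ and the $v_p(U_\rho)$ contribution appears exactly there.

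The heart of the proof is counting carries in the parenthesized base-$p$ addition $S = \lfloor n/\rho \rfloor + \lfloor in/\rho \rfloor + [\mathcal{P}]$. For $\rho < i < 2\rho$ with $j \geq 1$, the low $j$ digits of $\lfloor n/\rho \rfloor$ are all $p-1$, those of $\lfloor in/\rho \rfloor = (i\lambda - 1)p^j + (p^j - 2)$ are $p-2$ then $p-1$'s, and the appended $[\mathcal{P}] = 1$ forces $j$ successive carries, the first of which picks up the extra weight $\delta_j = \delta$ coming from Kummer's rule. The cascade then adds $1$ to the high-digit sum $(\lambda - 1) + (i\lambda - 1)$, and the associativity of carry counts from Section \ref{Section2} splits this into $c_i$ carries for $(\lambda - 1) + (i\lambda - 1)$ plus $v_p(\Lambda_{i,j})$ carries for the subsequent $+1$, since the latter equals the number of trailing $(p-1)$-digits of $(i+1)\lambda - 2$. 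The $j = 0$ subcase collapses the low-digit cascade but the same associativity argument still yields $c_i + v_p(\Lambda_{i,0})$.

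For $i = 2\rho$ with $p > 2$ and $j \geq 1$, digit $0$ of $\lfloor 2n \rfloor$ is $p - 2$, so the sum $(p-1) + (p-2) \geq p$ still fires a carry at position $0$ and the cascade reproduces the earlier pattern minus the radix contribution, giving a total of $j + c_i + v_p(\Lambda_{i,j})$. For $p = 2$, however, $\lfloor 2n \rfloor = 2n$ is even, so its digit $0$ is $0$ and position $0$ contributes $1 + 0 = 1$ with no carry; the cascade therefore starts at position $1$ rather than position $0$, suppressing exactly one carry compared to the $p > 2$ analysis and accounting for the $-1$. When $j \in \{0,1\}$ the cascade vanishes entirely and only the $c_i$ carries from the high-digit sum survive. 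The main obstacle will be bookkeeping the binary digit patterns of $\lambda 2^j - 1$ and $2\lambda\rho 2^j - 2$ carefully enough (especially distinguishing $\rho$ even from $\rho$ odd) to justify the surviving count $j + v_2(\Lambda_{i,j}) - 1$ in the $p = 2$, $j \geq 2$ subcase and to confirm that no further $\delta$ contribution slips in beyond what the stated formula already encodes.
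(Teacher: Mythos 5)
Your proposal follows essentially the same route as the paper's proof: apply the Lucasnomial Kummer rule, compute $\lfloor n/\rho\rfloor$, $\lfloor in/\rho\rfloor$ and their fractional parts, observe that $[\mathcal{P}]=1$ exactly when $\rho<i<2\rho$, track the low-digit carry cascade (including the $p=2$, $i=2\rho$ anomaly at digit $0$), and split the high-digit carries into $c_i$ plus $v_p(\Lambda_{i,j})$ by associativity of carry counts. The bookkeeping you flag for $p=2$, $j\geq 2$ resolves exactly as in the paper---digit $1$ of both integer parts equals $1$, so a carry fires there and generates $j-2$ further carries, giving the $-1$---and no distinction between $\rho$ even and odd is needed, since the high digits are entirely absorbed into $c_i$.
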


\begin{proof}
    Write $i=\rho+t$, with $1\leq t \leq \rho$, so that with the notation used in the proof of Lemma \ref{valuationlucasnomials}, we have
    \[
    \frac{n}{\rho} = (\lambda-1)p^j+N+\frac{\rho-1}{\rho},
    \]
    and
    \[
    \frac{in}{\rho} = n+\frac{tn}{\rho} =\frac{\rho-t}{\rho}+
    \begin{cases}
        (\lambda i-1)p^j + M, & \text{if $j>0$;}\\
        (\lambda i-2), & \text{if $j=0$,}
    \end{cases}
    \]
    where
    \[
    M=(p-2)+ [j\geq 2] \cdot \sum_{k=1}^{j-1} (p-1)p^k.
    \]
    Note that adding the fractional parts of $n/\rho$ and $in/\rho$, we obtain
    \begin{equation}\label{addfractionals}
        \bigg\{ \frac{n}{\rho} \bigg\} + \bigg\{ \frac{in}{\rho} \bigg\} = \frac{2\rho -t-1}{\rho} =
        \begin{cases}
            1 + \frac{\rho-t-1}{\rho}, & \text{if $1\leq t <\rho$;} \\
            \frac{\rho-1}{\rho}, & \text{if $t=\rho$.}
        \end{cases}
    \end{equation}
    First, assume that $j=0$. By \eqref{addfractionals}, a carry with weight $v_p(U_\rho)$ occurs across the radix point if and only if $i<2\rho$, and we are left with counting carries produced in the parenthesized base-$p$ addition
    \[
    (\lambda i-2) + (\lambda-1) + [i<2\rho],
    \]
    that is, the carries in the addition of $(\lambda i-2)$ and $(\lambda-1)$, plus the number of carries in the addition of $(\lambda(i+1)-3)$ and $[i<2\rho]$. The latter addition produces exactly $v_p(\Lambda_{i,0})$ carries. Now, assume that $j\geq 1$. Then, by \eqref{addfractionals}, we have one carry across the radix point with weight $v_p(U_\rho)$ when $i<2\rho$, which generates, given the $p$-ary digits of $N$, at least $j$ other carries with total weight $j+\delta_j$. When $i=2\rho$, there is no carry across the radix point, but the addition of the first digits of $N$ and $M$ yields
    \[
    (p-1)+(p-2) = 
    \begin{cases}
        p+(p-3), & \text{if $p\geq 3$;} \\
        p-1, & \text{if $p=2$,}
    \end{cases}
    \]
    thus generating at least $j-1$ other carries when $p\geq 3$, but no carry when $p=2$. If $p=2$ and $j\geq 2$, then adding the second digits of the integer parts always produces one carry, which generates at least $j-2$ further carries. Therefore, whether $i<2\rho$, or $i=2\rho$ with $p\geq 3$ or $p=2$, we are left with counting carries in the parenthesized addition
    \[
    (\lambda i-1) p^j + (\lambda-1)p^j +
    \begin{cases}
        0, &\text{if $i=2\rho$, $p=2$ and $j=1$;}\\
        p^j, &\text{otherwise,}
    \end{cases}
    \]
    and we conclude the proof in the same way we did in Lemma \ref{valuationlucasnomials}.
\end{proof}

\section{First consequences}\label{Section5}

In this section, we give some consequences of Lemmas \ref{valuationlucasnomials} and \ref{valuationlucasnomials2}. The theorem below acts as an $l$-Lucasnomial-Catalan-number version of Theorem \ref{Thm1}. Most of the other results highlight infinite families of integers $n\geq 1$ such that $C_{U,l}(n)$ is, or is not, an integer.

\begin{theorem}\label{ThmRho>l}
    Let $p\nmid Q$ satisfy $\rho_U(p)>l$. Then $v_p(C_{U,l}(n))\geq 0$ for all $n\geq 1$.
\end{theorem}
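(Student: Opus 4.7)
The goal is to show that for every prime $p \nmid Q$ with $\rho := \rho_U(p) > l$ one has
\[
v_p\!\left(\binom{(l+1)n}{n,n,\ldots,n}_{U,l}\right) \;\geq\; l\, v_p(U_{n+1}).
\]
I would split into two cases according to whether $\rho$ divides $n+1$.

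\textbf{Case 1: $\rho \nmid n+1$.} By Proposition~\ref{lawsAandR}(2), $p \nmid U_{n+1}$, so $v_p(U_{n+1})=0$, and it suffices to check that the $l$-Lucasnomial coefficient has non-negative $p$-adic valuation. This follows immediately from Corollary~\ref{TLucMultiVal}, since each factor $\binom{(i+1)n}{n}_U$ is a (classical) Lucasnomial coefficient, hence an integer, and therefore has non-negative $p$-adic valuation.

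\textbf{Case 2: $\rho \mid n+1$.} Write $n+1 = \lambda\, \rho\, p^j$ with $j \geq 0$ and $p \nmid \lambda$, so that $n = \lambda\rho p^j - 1$ falls exactly into the setup of Lemma~\ref{valuationlucasnomials}. By Proposition~\ref{lawsAandR}(3) applied to $m=\lambda p^j$ (whose $p$-adic valuation is $j$), we get
\[
v_p(U_{n+1}) \;=\; v_p(U_\rho) + j + \delta_j.
\]
Since $l < \rho$, every index $i \in \llbracket 1, l \rrbracket$ satisfies $i < \rho$, so the Iverson brackets $[i<\rho]$ and $[j>0 \text{ or } i<\rho]$ in Lemma~\ref{valuationlucasnomials} both evaluate to $1$. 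Thus, for each such $i$,
\[
v_p\!\left(\binom{(i+1)n}{n}_U\right) \;=\; j + \delta_j + v_p(U_\rho) + v_p(\Lambda_i) + c_i \;\geq\; v_p(U_{n+1}),
\]
the last two terms being non-negative. Summing over $i=1,\ldots,l$ and applying Corollary~\ref{TLucMultiVal} yields
\[
v_p\!\left(\binom{(l+1)n}{n,\ldots,n}_{U,l}\right) \;\geq\; l\, v_p(U_{n+1}),
\]
which completes the argument.

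There is no real obstacle here: the theorem is essentially a direct consequence of Corollary~\ref{TLucMultiVal} combined with the explicit formula of Lemma~\ref{valuationlucasnomials}, once one observes that the hypothesis $\rho > l$ forces every index $i \leq l$ into the ``$i<\rho$'' branch of that formula. The only thing to be slightly careful about is the split into $\rho \mid n+1$ versus $\rho \nmid n+1$, so that $v_p(U_{n+1})$ is handled correctly (being zero in the second case and computable via Proposition~\ref{lawsAandR} in the first).
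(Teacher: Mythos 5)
Your proposal is correct and follows essentially the same route as the paper: split on whether $p\mid U_{n+1}$, compute $v_p(U_{n+1})=j+\delta_j+v_p(U_\rho)$ via Proposition~\ref{lawsAandR}, and combine Corollary~\ref{TLucMultiVal} with Lemma~\ref{valuationlucasnomials}, where $\rho>l$ forces every factor into the $i<\rho$ branch. The only cosmetic difference is in the trivial case $p\nmid U_{n+1}$, where you invoke integrality of the Lucasnomial factors (the paper instead cites Ballot, and the same non-negativity also follows directly from the Lucasnomial Kummer rule since $p\nmid Q$), so no gap arises.
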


\begin{proof}
    Let $p$ be such a prime. If $p\nmid U_{n+1}$, then it suffices to show that the $l$-Lucasnomial coefficient has non-negative valuation. Corollary \ref{TLucMultiVal} and \cite[($11$) and the comment below]{Ba1} ensure this is the case. Assume $p\mid U_{n+1}$ and write $n=\lambda \rho p^j-1$ for some $\lambda\geq 1$ with $p\nmid \lambda$. Since $\rho=\rho_U(p)>l$, by Lemma \ref{valuationlucasnomials}, Corollary \ref{TLucMultiVal}, and Proposition \ref{lawsAandR}, we have
    \[
    v_p\biggl(\binom{ (l+1)n}{n }_{U,l} \biggr) = \sum_{i=1}^{l} v_p\biggl( \binom{ (i+1)n}{n }_U \biggr) \geq l (j+\delta_j+ v_p(U_\rho)) = v_p(U_{n+1}^l),
    \]
    that is, $v_p(C_{U,l}(n))\geq 0$.
\end{proof}l

The integrality of Lucasnomial Catalan numbers, proved in a paper of Ekhad \cite{Ek}, is recovered for $l=1$. Indeed, $p\nmid Q$ and $\rho_U(p)\geq 2$ imply that $v_p(C_{U,1}(n))\geq 0$ for all $n\geq 1$ by Theorem \ref{ThmRho>l}. But the regularity of $U$ ensures that these primes are exactly those that divide $U$ at some term. Therefore, we conclude that $C_{U,1}(n)$ is always an integer.

\begin{lemma}\label{lemmap>l}
    If there exists a prime number $p>l$, $p\nmid Q$, of rank $l$ in $U$, then $C_{U,l}(lp^j-1)$ is not an integer for all $j\geq 0$.
\end{lemma}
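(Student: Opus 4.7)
The plan is to compute $v_p(C_{U,l}(n))$ directly for $n = lp^j-1$ and show it is strictly negative. First I would note that, since the rank of $p$ in $U$ is at least $2$ (because $U_1 = 1$), the hypothesis $p > l$ together with the existence of a rank-$l$ prime forces $l \geq 2$, hence $p \geq 3$. In particular the invariant $\delta$ from Proposition \ref{lawsAandR} vanishes, so $\delta_j = 0$ throughout. Writing $n = \lambda \rho p^j - 1$ with $\lambda = 1$ and $\rho = l$, we are exactly in the setting of Lemma \ref{valuationlucasnomials} with $i$ ranging over $\llbracket 1, \rho \rrbracket = \llbracket 1, l \rrbracket$.

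Next I would apply Corollary \ref{TLucMultiVal} to write
\[
v_p\!\left(\binom{(l+1)n}{n}_{U,l}\right) = \sum_{i=1}^{l} v_p\!\left(\binom{(i+1)n}{n}_U\right),
\]
and evaluate each term via Lemma \ref{valuationlucasnomials}. Because $\lambda = 1$, the quantity $\Lambda_i = \lambda(i+1) - 1 = i$ satisfies $v_p(\Lambda_i) = 0$ for every $i \in \llbracket 1, l \rrbracket$ (as $1 \leq i \leq l < p$), and the carry count $c_i$, being the number of carries in adding $\lambda - 1 = 0$ to $\lambda i - 1$, is zero. Hence for $1 \leq i < l$ each summand equals $j + v_p(U_\rho)$, while for $i = l = \rho$ the summand reduces to $j$. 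Summing gives
\[
v_p\!\left(\binom{(l+1)n}{n}_{U,l}\right) = lj + (l-1)\, v_p(U_\rho).
\]

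Then I would compute the denominator: by Proposition \ref{lawsAandR}(3) applied with $m = p^j$ (so $x = j$ and $\delta_j = 0$),
\[
v_p(U_{n+1}) \;=\; v_p(U_{lp^j}) \;=\; v_p(U_\rho) + j,
\]
hence $v_p(U_{n+1}^l) = l\, v_p(U_\rho) + lj$. Subtracting,
\[
v_p\bigl(C_{U,l}(n)\bigr) \;=\; lj + (l-1)v_p(U_\rho) - l\, v_p(U_\rho) - lj \;=\; -\,v_p(U_\rho).
\]
Since $p$ has rank $l$ in $U$, we have $p \mid U_\rho$, so $v_p(U_\rho) \geq 1$ and $v_p(C_{U,l}(n)) \leq -1$. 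This holds for every $j \geq 0$, proving that $C_{U,l}(lp^j - 1)$ is not an integer.

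There is no real obstacle here; the argument is a bookkeeping exercise in Lemma \ref{valuationlucasnomials}. The only point requiring a little care is the case $i = l = \rho$, where both Iverson brackets $[i < \rho]$ and (when $j = 0$) $[j > 0 \text{ or } i < \rho]$ vanish; one must verify that this is consistent with $\lambda = 1$ and produces the asymmetry responsible for the strict inequality $v_p(C_{U,l}(n)) < 0$. Everything else follows from the simplifications $p \geq 3$, $p > l$, and $\lambda = 1$.
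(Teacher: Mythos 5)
Your proof is correct and follows essentially the same route as the paper: apply Corollary \ref{TLucMultiVal} together with Lemma \ref{valuationlucasnomials} with $\lambda=1$, $\rho=l$, note that $p>l\geq i$ kills all $v_p(\Lambda_i)$ and carry terms and that $p>2$ gives $\delta_j=0$, obtaining $lj+(l-1)v_p(U_l)$ versus $v_p(U_{n+1}^l)=l(j+v_p(U_l))$. Your explicit computation of the deficit $-v_p(U_l)\leq -1$ and the remark about the $i=\rho$ Iverson brackets are just a slightly more detailed write-up of the paper's argument.
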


\begin{proof}
    Since $p>l=\rho_U(p)$, we have $l\leq p-1$ and $p>2$. Putting $n=lp^j-1$, by Lemma \ref{valuationlucasnomials} and Corollary \ref{TLucMultiVal}, we obtain
    \begin{align*}
        v_p\biggl(\binom{ (l+1)n}{n }_{U,l}\biggr) &= \sum_{i=1}^{l} v_p\biggl( \binom{ (i+1)n}{n }_U \biggr) \\
        &= lj+(l-1)v_p(U_l)+[j>0]\cdot v_p(l)+ \sum_{i=1}^{l-1} v_p(i) \\
        &=lj+(l-1)v_p(U_l),
    \end{align*}
    where the last equality comes from $p>l\geq i$. Since the valuation of the multinomial coefficient is less than $v_p(U_{n+1}^l) = l(v_p(U_l)+j)$, we obtain $C_{U,l}(lp^j-1) \not \in \mathbb{Z}$.
\end{proof}

\begin{proposition}\label{Propp+1}
    Assume $p>3$ and $\rho_U(p)=p+1$. If there exists a prime $q\nmid Q$ of rank $p$ in $U$, then $C_{U,p+1}(pq^j-1)$ is not an integer for all $j\geq 0$.
\end{proposition}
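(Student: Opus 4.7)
The plan is to compute $v_q(C_{U,p+1}(pq^j-1))$ directly using Corollary \ref{TLucMultiVal} together with Lemmas \ref{valuationlucasnomials} and \ref{valuationlucasnomials2}, and show that this valuation equals $-v_q(U_p)$, which is strictly negative since $q\mid U_p$.

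Set $n=pq^j-1$. Writing $n=\lambda\rho q^j-1$ with $\lambda=1$ and $\rho=\rho_U(q)=p$, we may apply the two lemmas with prime $q$ and parameter $j$. Before doing so, I would record two useful facts. First, by Proposition \ref{lawsAandR}, $p=\rho_U(q)$ divides $q-\left(\frac{\Delta}{q}\right)$; since $p>3$, the integers $p\pm 1$ are even and larger than $2$, hence not prime, so necessarily $q\geq 2p-1>p+1$. In particular $q$ is odd, so $\delta=0$ and $\delta_j=0$, and moreover $q>i$ for every $i\in\llbracket 1,p+1\rrbracket$, giving $v_q(i)=0$ and $v_q(i+1)=0$ for all such $i$.

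Next I would split the sum from Corollary \ref{TLucMultiVal},
\[
v_q\!\left(\binom{(p+2)n}{n}_{U,p+1}\right)=\sum_{i=1}^{p+1}v_q\!\left(\binom{(i+1)n}{n}_U\right),
\]
into three ranges. For $1\leq i\leq\rho-1=p-1$, Lemma \ref{valuationlucasnomials} (with $\lambda=1$, so $\Lambda_i=i$ and $c_i=0$, since the base-$q$ addition is $0+(i-1)$) gives $v_q\!\left(\binom{(i+1)n}{n}_U\right)=j+v_q(U_p)$. For $i=\rho=p$, the same lemma with the indicator $[i<\rho]$ killing the $v_q(U_\rho)$ term and $\Lambda_\rho=p$ (so $v_q(\Lambda_\rho)=0$) yields contribution $j$. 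For $i=\rho+1=p+1$, which lies in $\llbracket\rho+1,2\rho\rrbracket$ strictly, Lemma \ref{valuationlucasnomials2} applies with $\Lambda_{i,j}\in\{p,p+1\}$ (both coprime to $q$) and $c_i=0$, giving contribution $j+v_q(U_p)$. Summing,
\[
(p-1)(j+v_q(U_p))+j+(j+v_q(U_p))=(p+1)j+p\,v_q(U_p).
\]

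Finally, since $q$ is odd and $\rho_U(q)=p$, Proposition \ref{lawsAandR} gives $v_q(U_{pq^j})=v_q(U_p)+j$, so $v_q(U_{n+1}^{p+1})=(p+1)(v_q(U_p)+j)$. Subtracting,
\[
v_q(C_{U,p+1}(pq^j-1))=(p+1)j+p\,v_q(U_p)-(p+1)v_q(U_p)-(p+1)j=-v_q(U_p)\leq -1,
\]
which proves the claim. The main obstacle is the bookkeeping in the lemmas: I need to verify that $q$ avoids every small prime factor that appears in $\Lambda_i$ and $\Lambda_{i,j}$, which is exactly where the hypothesis $p>3$ enters (it rules out $q=p\pm 1$ and forces $q\geq 2p-1>p+1$).
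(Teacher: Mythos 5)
Your proof is correct and follows essentially the same route as the paper: you apply Corollary \ref{TLucMultiVal} together with Lemmas \ref{valuationlucasnomials} and \ref{valuationlucasnomials2} (with $\lambda=1$, $\rho=p$) to get the exact valuation $(p+1)j+p\,v_q(U_p)$ and compare it with $v_q\bigl(U_{n+1}^{p+1}\bigr)=(p+1)\bigl(j+v_q(U_p)\bigr)$, exactly as the paper does. The only cosmetic difference is how you kill the small factors: you deduce $q\geq 2p-1>p+1$ from the compositeness of $p\pm1$, whereas the paper notes that $p\mid q\pm1$ and $q\mid p\pm1$ admit no simultaneous solution for primes $p>3$, $q>2$; both rest on the same rank divisibility from Proposition \ref{lawsAandR}.
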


\begin{proof}
     Put $n=pq^j-1$ and $l=p+1$. Since $\rho_U(q)=p$ and $\rho_U(p)=p+1$, we know that $p$ and $q$ are distinct primes. Moreover, $q>2$ since otherwise $p=\rho_U(q)$ would be equal to either $2$ or $3$, which contradicts the assumption $p>3$. By Lemmas \ref{valuationlucasnomials} and \ref{valuationlucasnomials2}, and Corollary \ref{TLucMultiVal}, we have
     \[
     v_q\biggl(\binom{ (l+1)n}{n }_{U,l} \biggr) = lj + (l-1)v_q(U_p)+ \sum_{i=1}^{p-1} v_q(i) +v_q(p+1-[j=0]),
     \]
     for every prime $q>2$. Since $p$ is the rank of $q$ in $U$, we have $p\mid q\pm 1$ by Proposition \ref{lawsAandR}, and we have $v_q(p)=0$ and $v_q(i)=0$ for all $1\leq i <p-1$. Next, we determine whether $q\mid p\pm 1$. Solving $p\mid q\pm1$ and $q\mid p\pm1$ for primes $p>3$ and $q> 2$ yields no solution. Hence
     \[
     v_q\biggl(\binom{ (l+1)n}{n }_{U,l}\biggr) = lj + (l-1)v_q(U_p),
     \]
     which, after comparing with $v_q(U_{n+1}^l) = l(j+v_q(U_p))$, proves the result.
\end{proof}

\begin{theorem}\label{thmDefect}
    Assume $C_{U,l}=(C_{U,l}(n))_{n\geq 1}$ is a multi-Catalan sequence, $l\geq 2$. Then 
    \begin{enumerate}[label=(\arabic*)]
        \item If $l-1\not \in \mathbb{P}$ or $l-1 \in \mathbb{P}$ of rank $\rho\ne l$ in $U$, then $U$ is $l$-defective,

        \item If $l-1 \in \mathbb{P}_{>3}$ of rank $l$ in $U$, then $U$ is $(l-1)$-defective.
    \end{enumerate}
\end{theorem}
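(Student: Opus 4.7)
My plan is to prove both items by contraposition: assuming the asserted defectiveness fails, I will exhibit an infinite family of integers $n\geq 1$ with $C_{U,l}(n)\notin\mathbb{Z}$, contradicting the multi-Catalan hypothesis. Each case then reduces to one of the non-integrality criteria already established in this section, namely Lemma \ref{lemmap>l} for Part (1) and Proposition \ref{Propp+1} for Part (2); the only substantive work is selecting the correct prime and verifying the hypotheses of the relevant criterion.

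For Part (1), suppose $U$ is not $l$-defective and fix a prime $p\nmid\Delta$ of rank $l$ in $U$ (so automatically $p\nmid Q$, since $p$ admits a rank). The crucial claim is $p>l$, after which Lemma \ref{lemmap>l} yields the infinite family $n=lp^j-1$, $j\geq 0$, of non-integer values of $C_{U,l}$. If $p$ is odd, Proposition \ref{lawsAandR} forces $l\mid p-\legendre{\Delta}{p}$, whence $p\geq l-1$; the possibility $p=l$ would give $l\mid\pm 1$, impossible for $l\geq 2$, and the possibility $p=l-1$ would make $l-1$ a prime of rank exactly $l$ in $U$, violating the hypothesis of (1). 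If $p=2$, Proposition \ref{lawsAandR} confines the rank to $\{2,3\}$: rank $2$ is incompatible with $2\nmid\Delta$, while rank $3$ forces $l=3$ and then $l-1=2$ is a prime of rank $l$ in $U$, once more contradicting (1). Thus $p>l$ in every case, and the contradiction follows.

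For Part (2), set $p:=l-1$, so by hypothesis $p$ is prime with $p>3$ and $\rho_U(p)=l=p+1$. Suppose $U$ is not $(l-1)$-defective and fix a prime $q\nmid\Delta$ of rank $p$ in $U$; because $q$ admits a rank we have $q\nmid Q$. All hypotheses of Proposition \ref{Propp+1} are then met, so $C_{U,l}(pq^j-1)\notin\mathbb{Z}$ for every $j\geq 0$, again contradicting the multi-Catalan property. The only genuine obstacle in the proof is the case analysis in Part (1) that pins down $p>l$, in particular handling the small-prime cases $p=2$ and $p=l-1$ cleanly using Proposition \ref{lawsAandR} and the assumption of (1); once this inequality is in place, Lemma \ref{lemmap>l} and Proposition \ref{Propp+1} supply the desired infinite obstructions with essentially no further work.
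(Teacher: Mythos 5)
Your proof is correct and follows essentially the same route as the paper: a contradiction argument combining Proposition \ref{lawsAandR} with Lemma \ref{lemmap>l} for part (1) and Proposition \ref{Propp+1} for part (2), merely reorganized so that the rank-divisibility argument pins down $p>l$ before invoking Lemma \ref{lemmap>l} (the paper instead deduces $p\le l$ from that lemma first and then forces $l=p+1$). Your explicit treatment of the case $p=2$ is a small extra care the paper leaves implicit, but it does not change the argument.
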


\begin{proof}
    Assume $U$ is not $l$-defective. Then there exists a prime $p\nmid \Delta Q$ of rank $l$ in $U$ and by Lemma \ref{lemmap>l}, we must have $p\leq l$. Moreover, $p\nmid \Delta$ implies that $p\ne l$. However, by Proposition \ref{lawsAandR} we have $l\mid p\pm 1$, thus $l$ must be equal to $p+1$. The first point is done since we obtained a contradiction. For the second point, Proposition \ref{Propp+1} ensures that $U$ is $(l-1)$-defective.
\end{proof}

\begin{lemma}\label{lemmarho<2rho}
    If there exists an odd prime $p\nmid Q$ of rank $\rho$ in $U$ with $\rho\leq l \leq \min{(2\rho, p-1)}$, then $C_{U,l}(\rho p^j-1) $ is non-integral for all $j\geq 1$.
\end{lemma}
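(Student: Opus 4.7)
The plan is to show that $v_p(C_{U,l}(\rho p^j - 1)) < 0$. Writing $n = \rho p^j - 1 = \lambda \rho p^j - 1$ with $\lambda = 1$, Proposition~\ref{lawsAandR} gives $v_p(U_{n+1}) = v_p(U_\rho) + j$, since $p$ odd forces $\delta = 0$. Thus the denominator has $p$-adic valuation $l(v_p(U_\rho) + j)$, and by Corollary~\ref{TLucMultiVal} it suffices to show
\[
S := \sum_{i=1}^{l} v_p\binom{(i+1)n}{n}_{U} \,<\, l\bigl(v_p(U_\rho) + j\bigr).
\]

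I would partition $\llbracket 1, l \rrbracket$ into four ranges: $\llbracket 1, \rho - 1 \rrbracket$ and $\{\rho\}$ (both handled by Lemma~\ref{valuationlucasnomials}), and $\llbracket \rho + 1, \min(l, 2\rho - 1) \rrbracket$ and $\{2\rho\}$ (both handled by Lemma~\ref{valuationlucasnomials2}); the last range is nonempty only when $l = 2\rho$. With $\lambda = 1$ the carry counts $c_i$ vanish, and since $p$ is odd, $\delta_j = 0$. The quantities $\Lambda_i$ and $\Lambda_{i,j}$ simplify to $i$ (or to $2\rho$ when $i = 2\rho$), and the hypothesis $l \leq p - 1$ ensures every such integer is coprime to $p$, so $v_p(\Lambda_i) = v_p(\Lambda_{i,j}) = 0$ throughout.

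After these simplifications each term in the first and third ranges contributes $j + v_p(U_\rho)$, while the two exceptional terms at $i = \rho$ and (when it occurs) $i = 2\rho$ contribute only $j$; the missing $v_p(U_\rho)$ reflects the vanishing of the indicator $[i < \rho]$ in Lemma~\ref{valuationlucasnomials} and the absence of a $v_p(U_\rho)$ summand in the $i = 2\rho$ clause of Lemma~\ref{valuationlucasnomials2}. Summing yields
\[
S = lj + \bigl(l - 1 - [l = 2\rho]\bigr)\, v_p(U_\rho),
\]
which is strictly less than $l(j + v_p(U_\rho))$ because $v_p(U_\rho) \geq 1$. Hence $v_p(C_{U,l}(n)) = -\bigl(1 + [l = 2\rho]\bigr)\, v_p(U_\rho) < 0$, and $C_{U,l}(n) \notin \mathbb{Z}$.

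The most delicate bookkeeping is at the two exceptional indices, where the cross-radix contribution $\{n/\rho\} + \{in/\rho\} = (2\rho - i - 1)/\rho$ drops strictly below $1$, so no $v_p(U_\rho)$ weight is awarded; these two positions are exactly what leaks integrality. The bound $l \leq p - 1$ is used throughout to kill every $p$-adic valuation of the small integers $\Lambda_i, \Lambda_{i,j}, i$ that might otherwise compensate for the lost $v_p(U_\rho)$.
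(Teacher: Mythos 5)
Your proposal is correct and follows essentially the same route as the paper: apply Lemmas \ref{valuationlucasnomials} and \ref{valuationlucasnomials2} with $\lambda=1$ and $n=\rho p^j-1$, note $\delta_j=0$ and $c_i=0$, use $l\leq p-1$ to kill all $v_p(\Lambda_i)$ and $v_p(\Lambda_{i,j})$ terms, and compare $lj+(l-1-[l=2\rho])v_p(U_\rho)$ with $v_p(U_{n+1}^l)=l(j+v_p(U_\rho))$. The bookkeeping at $i=\rho$ and $i=2\rho$ matches the paper's computation exactly.
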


\begin{proof}
    Write $l=\rho+t$ with $0\leq t \leq  \rho$ and put $n=\rho p^j-1$. Using Lemmas \ref{valuationlucasnomials} and \ref{valuationlucasnomials2}, Corollary \ref{TLucMultiVal}, and since $p>2$ and $j> 0$, we obtain
    \begin{align*}
        v_p\biggl(\binom{(l+1)n}{n}_{U,l}\biggr) = lj+(l-1)v_p(U_\rho) + \sum_{ \subalign{i=1 \\ i\ne \rho} }^l v_p(i),
    \end{align*}
    when $l<2\rho$, and
    \[
    v_p\biggl(\binom{(l+1)n}{n}_{U,l}\biggr) = lj+(l-2)v_p(U_\rho)  + \sum_{ \subalign{i=1 \\ i\ne \rho} }^{l} v_p(i),
    \]
    when $l=2\rho$. Therefore, from the inequality $l<p$, we obtain
    \[
    v_p\biggl(\binom{(l+1)n}{n}_{U,l}\biggr) = lj +  (l-1-[l=2\rho])v_p(U_\rho),
    \]
    which is less than $v_p(U_{n+1}^l)=l(j+v_p(U_\rho))$.
\end{proof}

Until now, almost all results presented in this section are used to disprove the integrality of $C_{U,l}$ sequences. However, some exceptional pairs $(U,l)$ will later be found. Therefore, we state a final lemma to help us prove that $C_{U,l}$ is a multi-Catalan sequence for these exceptional cases.

\begin{lemma}\label{lemmap+1<l<}
    Let $n=\lambda \rho p^j-1$ for some $j\geq 0$ and $\lambda\geq 1$, $p\nmid \lambda$. Assume $\rho =p-\legendre{\Delta}{p}$, $v_p(U_\rho)=1$ and 
    \[
    \max(\rho,p) \leq l <2\rho.
    \]
    Then $v_p(C_{U,l}(n))\geq 0$, except for $n=\rho-1$ and $\rho=p=l$, or $\rho=p-1$ and $l=p$.
\end{lemma}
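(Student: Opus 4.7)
The plan is to write $n+1 = \lambda \rho p^j$ and reduce $v_p(C_{U,l}(n)) \geq 0$ to showing that a single sum of carry-counts is at least $1$. Since $\rho = p - \legendre{\Delta}{p}$ forces $p$ odd, Proposition~\ref{lawsAandR} gives $v_p(U_{n+1}) = v_p(U_\rho) + j = j+1$, whence $v_p(U_{n+1}^l) = l(j+1)$. Applying Corollary~\ref{TLucMultiVal}, then Lemma~\ref{valuationlucasnomials} for $i \in \{1, \ldots, \rho\}$ and Lemma~\ref{valuationlucasnomials2} for $i \in \{\rho+1, \ldots, l\}$ (the assumption $l < 2\rho$ keeps us in the $\rho < i < 2\rho$ regime of that lemma), the sum of the ``main'' pieces collapses to $(l-1)(j+1) + j = lj + l - 1$, which is exactly $1$ less than $v_p(U_{n+1}^l)$. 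Hence it suffices to show that the total $E$ of the remaining $c_i$-terms and $v_p(\Lambda)$-terms is at least $1$, except in the listed exceptions.

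Next, I would rewrite each extra contribution via Kummer's theorem as the $p$-adic valuation of a binomial coefficient. With $f(k) := v_p((\lambda k - 1)!)$ and $g(k) := v_p((\lambda k - 2)!)$, inspection of the proofs of Lemmas~\ref{valuationlucasnomials} and \ref{valuationlucasnomials2} shows that the extra contribution at index $i$ equals $f(i+1) - f(1) - f(i)$ whenever the underlying addition includes an extra $+1$ (for all $i < \rho$, and for $i \geq \rho$ when $j \geq 1$), while for $j = 0$ it equals $g(\rho+1) - f(1) - f(\rho)$ at $i = \rho$ and $g(i+1) - f(1) - g(i)$ for $\rho < i \leq l$. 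These are telescoping sums, yielding
\[
E = \begin{cases} v_p((\lambda(l+1)-1)!) - (l+1)\, v_p((\lambda - 1)!), & j \geq 1, \\[4pt] v_p((\lambda(l+1)-2)!) - (l+1)\, v_p((\lambda - 1)!), & j = 0. \end{cases}
\]

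To bound $E$, observe that the multinomial coefficients $\binom{\lambda(l+1)-1}{\lambda - 1, \ldots, \lambda - 1, l}$ (with $l+1$ copies of $\lambda - 1$ and one part $l$) and $\binom{\lambda(l+1)-2}{\lambda - 1, \ldots, \lambda - 1, l-1}$ (with $l+1$ copies of $\lambda - 1$ and one part $l - 1$) are integers. Writing $E$ as the $p$-adic valuation of the appropriate multinomial plus $v_p(l!)$ (if $j \geq 1$) or $v_p((l-1)!)$ (if $j = 0$), we get $E \geq v_p(l!) \geq 1$ when $j \geq 1$ (from $l \geq p$), and $E \geq v_p((l-1)!) \geq 1$ when $j = 0$ and $l \geq p + 1$. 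The only remaining case is $j = 0$ and $l = p$, where $v_p((p-1)!) = 0$ and $E$ is the number of carries in a parenthesized base-$p$ addition of $p + 1$ copies of $\lambda - 1$ together with one copy of $p - 1$. For $\lambda \geq 2$ the $p$-fold addition of $\lambda - 1 \geq 1$ alone already contributes exactly $s_p(\lambda - 1) \geq 1$ carries (immediate from Legendre's formula), so $E \geq 1$. For $\lambda = 1$ we get $E = 0$ and $v_p(C_{U,l}(n)) = -1$, which reproduces the two exceptions: $n = \rho - 1$ with either $\rho = p$ (so $n = p - 1$) or $\rho = p - 1$ (so $n = p - 2$).

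The main obstacle is the telescoping step for $j = 0$, where the formulas for the three ranges $i \leq \rho - 1$, $i = \rho$, and $\rho + 1 \leq i \leq l$ differ (switching between $f$ and $g$); careful bookkeeping is required for the middle and boundary terms to conspire into the clean expression $g(l+1) - (l+1)f(1)$.
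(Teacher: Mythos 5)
Your argument is correct, and after the common reduction it finishes by a genuinely different route than the paper. Both proofs use Corollary~\ref{TLucMultiVal} together with Lemmas~\ref{valuationlucasnomials} and \ref{valuationlucasnomials2} to see that the ``main'' terms fall short of $v_p(U_{n+1}^l)$ by exactly $1$, so everything hinges on the leftover carry/$v_p(\Lambda)$ contributions totalling at least $1$ outside the listed exceptions. The paper does this locally, by a case analysis on $\lambda \bmod p$: for $j>0$, or $j=0$ with $\lambda\not\equiv 1 \pmod p$, it exhibits a single index $i$ with $p\mid \lambda(i+1)-1$; for $j=0$, $\lambda\equiv 1\pmod p$, $\lambda\geq 2$ it forces one carry by a digit argument (the choice $i=\lfloor p/\lambda_s\rfloor$); and it evaluates the case $\lambda=1$ directly. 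You instead convert each leftover contribution into the valuation of a factorial ratio via Kummer's rule, telescope the whole sum to $v_p\big((\lambda(l+1)-1-[j=0])!\big)-(l+1)\,v_p\big((\lambda-1)!\big)$ (your bookkeeping at the boundary terms $i<\rho$, $i=\rho$, $i>\rho$ checks out against the two lemmas), and bound it below by $v_p(l!)$ or $v_p((l-1)!)$ using the integrality of an auxiliary multinomial coefficient; the only residual case $j=0$, $l=p$, $\lambda\geq 2$ is settled by the $s_p(\lambda-1)\geq 1$ carries (base-$p$ digit sum) coming from a block of $p$ copies of $\lambda-1$, which is legitimate since the total carry count is independent of parenthesization, and $\lambda=1$ reproduces exactly the stated exceptions. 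Your finish is more uniform and yields the exact value of the leftover term, avoiding the paper's digit analysis for $\lambda\equiv1\pmod p$; the paper's argument is more elementary pointwise, needing only one good index rather than the generalized Kummer/multinomial machinery.

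One small caveat: you dismiss $p=2$ on the grounds that the Legendre symbol forces $p$ odd, but the paper later applies this lemma with $p=2$ (for instance $U(2,Q)$ with $l=2$, or the Fibonacci cases where $U_3=2$ has rank $3$ and $\delta=1$). Keeping the $\delta_j$ terms changes nothing: they enter with coefficient $l$ both in the sum of binomial valuations and in $v_p(U_{n+1}^l)=l(1+j+\delta_j)$ and cancel, so your reduction to ``leftover $\geq 1$'' and the rest of your argument go through verbatim.
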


\begin{proof}
    Assume $j>0$. Then, by Lemmas \ref{valuationlucasnomials} and \ref{valuationlucasnomials2}, Corollary \ref{TLucMultiVal}, and the assumption $v_p(U_\rho)=1$, we have
    \[
    v_p\biggl(\binom{(l+1)n}{n}_{U,l}\biggr) \geq l(j+\delta_j)+(l-1) + \sum_{i=1}^l v_p(\Lambda_i),
    \]
    where $\Lambda_i = \lambda(i+1)-1$ for all $1\leq i \leq l$. Since $l\geq p$, there exists $1\leq i\leq l$ such that $\lambda(i+1)\equiv 1$ (mod $p$). Therefore,
    \[
    v_p\biggl(\binom{(l+1)n}{n}_{U,l}\biggr) \geq l(j+\delta_j+1) =v_p(U_{n+1}^l).
    \]
    Now, assume $j=0$. Then, using Lemmas \ref{valuationlucasnomials} and \ref{valuationlucasnomials2}, and Corollary \ref{TLucMultiVal}, we obtain
    \[
    v_p\biggl(\binom{(l+1)n}{n}_{U,l}\biggr) \geq (l-1) +\sum_{i=1}^{\rho-1} v_p(\Lambda_i) + [l>\rho] \cdot \sum_{ i=\rho+1 }^l v_p(\Lambda_{i,0}).
    \]
    Let $1\leq \mu \leq p-1$ be such that $\mu \lambda \equiv 1$ (mod $p$) and assume $\lambda \not \equiv 1$ (mod $p$). Thus $\mu\geq 2$, and taking $i=\mu-1 <\rho$ yields
    \[
    \lambda(i+1) \equiv 1 \pmod{p},
    \]
    and it follows that $v_p(C_{U,l}(\lambda \rho-1))\geq 0$. Next, assume that $p\mid \lambda-1$ and $\lambda\geq 2$. Let $s,\lambda_s\geq 1$ and $b\geq 0$ be integers such that $\lambda=1+\lambda_s p^s + bp^{s+1}$ and $\lambda_s<p$. Put
    \[
    i=\begin{cases}
        \floor{p/\lambda_s}, & \text{if $\lambda_s>1$;}\\
        p-1, & \text{if $\lambda_s=1$,}
    \end{cases}
    \]
    and let us count the number of carries produced in the base-$p$ addition $(\lambda-1) + (\lambda i-1)$. Since $1\leq i\leq p-1$, we have
    \[
    (i\lambda-1)= (i-1) + i\lambda_s p^s + dp^{s+1},
    \]
    for some integer $d\geq 0$ and with $i\lambda_s<p$. Adding the $s$-th digits of $(\lambda-1)$ and $(\lambda i-1)$ in base $p$ yields a carry since $\lambda_s+i\lambda_s=(i+1)\lambda_s \geq p$. By Lemmas \ref{carriesSum} and \ref{valuationlucasnomials},
    \[
    v_p\biggl(\binom{(l+1)n}{n}_{U,l}\biggr) \geq l = v_p(U_{n+1}^l).
    \]
    The only remaining case is $n=\rho-1$, for which we have
    \begin{equation}\label{eqVal}
        v_p\biggl(\binom{(l+1)n}{n}_{U,l}\biggr) = (l-1) + \sum_{i=1}^{\rho-1} v_p(i) + [l>\rho] \cdot \sum_{i=\rho+1}^{l} v_p(i-1).
    \end{equation}
    The $p$-adic valuation in \eqref{eqVal} is equal to $l-1$ when $\rho=p=l$, or $\rho=p-1$ and $l=\rho+1$, and is at least equal to $l=v_p(U_{n+1}^l)$ otherwise.
\end{proof}

\section{Multi-Catalan sequences}\label{Section6}

We are now ready to prove the main results on $l$-Lucasnomial Catalan numbers. First, we prove the integrality of all but a finite number of terms of $C_{U,l}$ when $(U,l)$ is an exceptional pair displayed in Table \ref{TableC}. Then, we proceed to show that for every other pair, there exists an infinite family of integers $n\geq 1$ such that $C_{U,l}(n)$ does not belong to $\mathbb{Z}$.

\begin{lemma}\label{exceptions}
    Assume $l\geq 2$ and let $(U,l)$ be a pair in Table \ref{TableC}. Then the numbers
    \[
    C_{U,l}(n) = \frac{1}{U_{n+1}^l} \binom{(l+1)n}{n}_{U,l},
    \]
    are integers for all $n\geq 1$, except for
    \[
    n =
    \begin{cases}
        p-1, & \text{if $p\mid \Delta$ and $l=p$;} \\
        p-2, & \text{if $\rho_U(p)=p-1$ and $l=p$;} \\
        5, & \text{if $U=U(1,-1)$ and $l=6$.}
    \end{cases}
    \]
    In particular, $C_{U,l}(n)$ is an integer for every $n\geq l$.
\end{lemma}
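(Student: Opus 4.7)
The proof plan is to verify, case by case for each pair $(U,l)$ listed in Table \ref{TableC}, that $v_p(C_{U,l}(n)) \geq 0$ for every prime $p$ and every $n \geq 1$ except those explicitly stated. Primes $p \mid Q$ contribute nothing since they divide no term of $U$. By Theorem \ref{ThmRho>l}, every prime $p \nmid Q$ with $\rho := \rho_U(p) > l$ also gives $v_p(C_{U,l}(n)) \geq 0$. Hence the problem reduces to checking the \emph{finite} set of primes $p \nmid Q$ with $\rho \leq l$, for each pair appearing in the table.

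For each such prime $p$, split on whether $p \mid U_{n+1}$. If $p \nmid U_{n+1}$, the required bound follows from Corollary \ref{TLucMultiVal} and the non-negativity of the $p$-adic valuation of each binomial $l$-Lucasnomial factor (see the comment following equation~(11) in \cite{Ba1}). Otherwise, write $n = \lambda \rho p^j - 1$ with $p \nmid \lambda$ and $j \geq 0$, and apply Lemmas \ref{valuationlucasnomials} and \ref{valuationlucasnomials2} together with Corollary \ref{TLucMultiVal} to obtain $v_p\!\left(\binom{(l+1)n}{n}_{U,l}\right)$ as an explicit sum indexed by $i \in \llbracket 1,l\rrbracket$, which is then compared with $v_p(U_{n+1}^l) = l(j + \delta_j + v_p(U_\rho))$.

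In the generic entries of Table \ref{TableC} the hypotheses of Lemma \ref{lemmap+1<l<} are satisfied at the relevant prime, so its conclusion is applied directly: its two listed exceptions $(\rho=p=l)$ and $(\rho=p-1,\ l=p)$ match, respectively, the families in the statement. Indeed, by Proposition \ref{lawsAandR}, $p \mid \Delta$ forces $\rho = p$, so together with $l = p$ one recovers the exception $n = \rho - 1 = p-1$; and $\rho_U(p) = p-1$ together with $l = p$ gives $n = \rho - 1 = p-2$. The separate pair $(U(1,-1),\,6)$ is handled by direct computation: for the Fibonacci sequence the primes with rank at most $6$ are precisely $p \in \{2,3,5\}$ with $\rho$ equal to $3,4,5$ respectively, so one parametrizes $n = \lambda \rho p^j - 1$ at each of these primes and evaluates $v_p(C_{U,6}(n))$ via Lemmas \ref{valuationlucasnomials} and \ref{valuationlucasnomials2}; the single exception $n = 5$ emerges from the boundary case $p=5$, $\lambda = j = 0+1$.

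The main obstacle is the Fibonacci case, because three distinct primes must be analyzed in parallel and the $p=2$ analysis carries the correction term $\delta$ of Proposition \ref{lawsAandR}, which intervenes delicately in Lemmas \ref{valuationlucasnomials} and \ref{valuationlucasnomials2}. The remaining pairs in the table reduce, after a short verification of the hypotheses $\rho = p - \legendre{\Delta}{p}$ and $v_p(U_\rho) = 1$, to a direct invocation of Lemma \ref{lemmap+1<l<}, so most of the work consists in identifying the finitely many primes of small rank for each listed sequence and matching the two generic exceptions of that lemma with the two generic exceptions in the statement.
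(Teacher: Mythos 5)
Your overall architecture is the same as the paper's (reduce to the finitely many primes of rank $\le l$ via Theorem \ref{ThmRho>l}, apply Lemma \ref{lemmap+1<l<} to the generic entries, match its two exceptional cases with the two generic exceptions of the statement, and treat a special pair by direct computation), but there is a concrete gap in how you decide which pairs need the direct computation. Lemma \ref{lemmap+1<l<} carries the hypothesis $\max(\rho,p)\le l<2\rho$, and this upper bound fails in exactly two places in Table \ref{TableC}: for $(U(1,-1),6)$ at $p=2$ (where $\rho_U(2)=3$, so $l=6=2\rho$) and for $(U(1,2),8)$ at $p=3$ (where $U_4=-3$, so $\rho_U(3)=4$ and $l=8=2\rho$). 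You set aside only the Fibonacci pair, and for the wrong reason (the number of primes and the $\delta$ correction); the pair $(U(1,2),8)$ is absorbed into your ``generic'' case with the assertion that the hypotheses of Lemma \ref{lemmap+1<l<} are satisfied at the relevant prime, which is false at $p=3$. That pair genuinely requires its own valuation computation (the paper shows $v_3\bigl(\binom{9n}{n}_{U,8}\bigr)\ge 8(j+1)$ by exhibiting suitable indices $i$ with $v_3(\Lambda_i)\ge 1$ and $v_3(\Lambda_{i_1,j})\ge 1$), so as written your argument does not cover it.

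Second, your account of the Fibonacci case is incorrect: the exception $n=5$ does not ``emerge from the boundary case $p=5$.'' Since $U_6=8$, the prime $5$ does not divide $U_{n+1}$ when $n=5$, and $n=5$ is not even of the form $\lambda\cdot 5\cdot 5^{j}-1$; at $p=3$ and $p=5$ Lemma \ref{lemmap+1<l<} applies with no exceptional $n$. The failure occurs at $p=2$, with $\rho=3$, $\lambda=1$, $j=1$, where $v_2\bigl(\binom{7\cdot 5}{5}_{U,6}\bigr)=17<18=v_2(U_6^6)$. Moreover the $p=2$ analysis for $j\ge 1$ is the genuinely delicate part of the proof (separate treatment of $j\ge 2$ via $v_2(3\lambda-1)$ or $v_2(5\lambda-1)\ge 2$, and of $j=1$, $\lambda\ge 3$ via a carry in the addition of $(\lambda-1)$ and $(\lambda i-1)$); your plan neither sketches this nor could produce the exception where you claim it arises, so this part of the proposal would not go through as stated.
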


\begin{proof}
    By Theorem \ref{ThmRho>l}, $v_p(C_{U,l}(n))\geq 0$ for every prime $p\nmid Q$ with rank $\rho>l$ in $U$. For every pair $(U,l)$ in Table \ref{TableC}, except $(U(1,-1),6)$ and $(U(1,2),8)$, we look at the first terms of $U$, up to $U_l$, and check that all prime factors of those terms satisfy the hypotheses of Lemma \ref{lemmap+1<l<}. It follows that $v_p(C_{U,l}(n))\geq 0$ for every prime $p\nmid Q$ of rank $\rho\leq l$ in $U$ and every $n\geq 1$, except for the few exceptions stated in Lemma \ref{lemmap+1<l<}. Since $U$ is regular, $p\nmid Q$ is equivalent to $p\mid U_m$, for some $m\geq 1$. Hence, for all but finitely many $n\geq 1$, we have $v_p(C_{U,l}(n))\geq 0$ for all primes. Let $l=6$ and $U=U(1,-1)$. The above method works for every prime but $2$. Indeed, the first terms of $U$ up to $U_6$ are
    \[
    0,1,1,2,3,5, \text{ and } 8,
    \]
    and we see that $\rho =\rho_U(2)=3$ and $l=2\rho$, thus Lemma \ref{lemmap+1<l<} does not apply. Since there is nothing to prove when $2\nmid U_{n+1}$, assume that $2\mid U_{n+1}$ and write $n+1=\lambda \rho 2^j$ for some $j\geq 0$ and $\lambda\geq 1$, with $2\nmid \lambda$. By Lemmas \ref{valuationlucasnomials} and \ref{valuationlucasnomials2}, and Corollary \ref{TLucMultiVal}, we have
    \[
    v_2\biggl( \binom{7n}{n}_{U,6} \biggr) \geq (l-2) + v_2(3\lambda-1) + v_2(6\lambda-2) \geq l = v_p(U_{n+1}^l),
    \]
    when $j=0$, where we used $2\nmid \lambda$, $v_2(U_3)=1$, and $\delta_0 = 0$. When $j>0$, we have
    \begin{equation}\label{vall=6j>0}
        v_2\biggl( \binom{7n}{n}_{U,6}\biggr) = 5j+9+\sum_{i=1}^5v_2(\lambda(i+1)-1) + [j>1](j+v_2(7\lambda-1)-1) +\sum_{i=1}^6 c_i,
    \end{equation}
    where we used that $2\nmid \lambda$, $v_2(U_3)=1$, and $\delta_j = 1$ and where $c_i$ is the number of carries in the base-$p$ addition of $(\lambda-1)$ and $(\lambda i-1)$. When $j\geq 2$, we have
    \[
    v_2\biggl( \binom{7n}{n}_{U,6} \biggr) = 6j+8+v_2(3\lambda-1) +v_2(5\lambda-1)+v_2(7\lambda-1)+\sum_{i=1}^6 c_i,
    \]
    which is at least equal to $6(j+2)=v_2(U_{n+1}^6)$ since $2\nmid \lambda$ and we either have $v_2(3\lambda-1)\geq 2$ or $v_2(5\lambda-1)\geq 2$, depending on the value of $\lambda \bmod{4}$. When $j=1$ and $\lambda=1$, that is, when $n=5$, we find
    \[
    v_2\biggl( \binom{7n}{n}_{U,6} \biggr) = 17 < v_2(U_{6}^6)=18,
    \]
    so that $C_{U,6}(5) \not \in \mathbb{Z}$. Next, assume $j=1$ and $\lambda\geq 3$. We may write
    \[
    \lambda=1+2^s + b2^{s+1},
    \]
    for some integers $s\geq 1$ and $b\geq 0$. When $i=1$, adding $(\lambda-1)$ to $(\lambda i-1)$ produces at least one carry from the addition of the $s$-th digits. Hence
    \[
    v_2\biggl( \binom{7n}{n}_{U,6} \biggr) \geq 5j+10+v_2(3\lambda-1)+v_2(5\lambda-1) \geq 18 = v_2(U_6^6),
    \]
    by \eqref{vall=6j>0}. That makes the case $U=U(1,-1)$ and $l=6$ complete. Let $l=8$ and $U=U(1,2)$. Here, the method used at the beginning of the proof in the general case works for every prime but $3$. Indeed, the first terms up to $U_8$ are
    \[
    0,1,1,-1,-3,-1,5,7, \text{ and } -3,
    \]
    and we see that $\rho=\rho_U(3)=4$ and $l=2\rho$, thus Lemma \ref{lemmap+1<l<} does not apply. Again by Lemmas \ref{valuationlucasnomials} and \ref{valuationlucasnomials2}, and Corollary \ref{TLucMultiVal}, we have
    \[
    v_3\biggl( \binom{9n}{n}_{U,8} \biggr) \geq 8j+6 + [j>0]\cdot (v_3(\Lambda_4 ) + v_3(\Lambda_{8,j}))+ \sum_{i=1}^3 v_3(\Lambda_i) + \sum_{i=5}^7 v_3(\Lambda_{i,j}).
    \]
    Let $\mu\in \{1,2\}$ be such that $\mu \lambda \equiv 1$ (mod $3$) and put $i=1$ when $\mu=2$, and $i=3$ when $\mu=1$. In addition, put $i_1=i+3$ when $j>0$, and $i_1=i+4$ when $j=0$. Hence
    \[
    v_3(\Lambda_i)\geq 1 \quad \text{and} \quad v_3(\Lambda_{i_1,j})\geq 1,
    \]
    and we obtain $v_3\bigl(\binom {9n}{n}_{U,8}\bigr)\geq 8(j+1) = v_3(U_{n+1}^8)$ for all such $n\geq 1$.
\end{proof}

\begin{lemma}\label{l=6}
    The two sequences $U(1,-1)$ and $U(1,2)$ are the only $\Delta$-regular Lucas sequences such that the numbers
    \[
    C_{U,6}(n) = \frac{1}{U_{n+1}^6} \binom{7n}{n}_{U,6},
    \]
    are integers for all $n\geq 1$, except $n=5$ when $U=U(1,-1)$.
\end{lemma}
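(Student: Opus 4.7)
The plan is to combine the defect-type reduction from Section \ref{Section5} with a case-by-case inspection of Table \ref{TableA}, then invoke Lemma \ref{exceptions} to finish.

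First, assuming that $C_{U,6}$ is a multi-Catalan sequence, I would apply Theorem \ref{thmDefect} with $l=6$: since $l-1=5$ is a prime greater than $3$, either $\rho_U(5)\ne 6$, forcing $U$ to be $6$-defective, or $\rho_U(5)=6$, forcing $U$ to be $5$-defective. In the second sub-case, Proposition \ref{Propp+1} with $p=5$ additionally rules out any prime $q\nmid Q$ of rank $5$ in $U$, for otherwise the infinite family $n=5q^j-1$, $j\geq 0$, would violate integrality of $C_{U,6}$. By the primitive divisor theorem and Table \ref{TableA}, these constraints cut the problem down to a finite explicit list of candidate $\Delta$-regular Lucas sequences to inspect.

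Second, I would walk through that finite list and, for every candidate $U$ other than $U(1,-1)$ and $U(1,2)$, exhibit an infinite family of indices $n$ with $v_p(C_{U,6}(n))<0$ for a suitable prime $p$. The tools available are Lemma \ref{lemmap>l} (when $U$ admits a prime $p>6$ of rank exactly $6$), Lemma \ref{lemmarho<2rho} (when there is an odd prime $p\geq 7$ of rank $\rho\in\{3,4,5,6\}$ in $U$), and, when neither of these applies because every relevant prime is small, a direct carry count at a chosen $p\in\{2,3,5\}$ using Lemmas \ref{valuationlucasnomials} and \ref{valuationlucasnomials2} together with Corollary \ref{TLucMultiVal}, evaluated along a well-chosen arithmetic progression $n=\lambda\rho p^j-1$.

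Finally, for the two surviving sequences $U=U(1,-1)$ and $U=U(1,2)$, integrality of $C_{U,6}(n)$ for every $n\geq 1$, apart from the single exception $n=5$ when $U=U(1,-1)$, is exactly the content of Lemma \ref{exceptions} applied with $l=6$, and I would simply cite it. The main obstacle is the second step: some candidates from Table \ref{TableA} have all primes of interest $\leq 6$, so no packaged lemma applies off the shelf, and one must imitate by hand the delicate carry-counting argument carried out for $(U(1,-1),6)$ inside Lemma \ref{exceptions}, this time run in the reverse direction to produce a contradiction instead of an integrality statement.
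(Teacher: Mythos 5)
Your overall skeleton (defect reduction via Theorem \ref{thmDefect} and Proposition \ref{Propp+1}, then cite Lemma \ref{exceptions} for the survivors) matches the paper's strategy, but the middle step has a genuine gap: the claim that the constraints ``cut the problem down to a finite explicit list of candidate sequences to inspect'' is false for the $6$-defective case. Line $6$ of Table \ref{TableA} consists of four \emph{infinite parametrized families}, e.g., $(P,(P^2-1)/3)$ with $3\nmid P\geq 4$ and $(P,(P^2-(-2)^i)/3)$ with $P\equiv\pm1\pmod 6$, $i\geq 1$; only the $5$-defective sub-case (where $\rho_U(5)=6$, which by $U_6=U_2U_3(P^2-3Q)$ forces $5\mid P^2-3Q$ and singles out $U(1,2)$ among the seven listed sequences) reduces to a finite check. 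Consequently, ``walking through the list'' and exhibiting a bad prime for each candidate is not a viable plan: there is no list to walk through, and your closing remark about imitating the carry count ``by hand'' does not explain how to handle infinitely many $(P,Q)$ at once.

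What the paper actually does at this point, and what your proposal is missing, is a structural argument over the four families centered on $U_3$: Lemma \ref{lemmarho<2rho} forces every prime divisor of $U_3$ to lie in $\{2,3,5\}$ (a prime $p\geq 7$ of rank $3$ satisfies $3\leq 6\leq\min(6,p-1)$), a direct valuation computation with Lemmas \ref{valuationlucasnomials} and \ref{valuationlucasnomials2} excludes $5\mid U_3$ and bounds the exponent of $3$ by $1$, so $U_3=\pm 2^a3^b$ with $b\in\{0,1\}$; in the family $(P,(P^2-(-2)^i)/3)$ one further needs the $2$-adic analysis $\delta=i-1$ together with the constraint $\delta+2a\leq 3$ to pin down $a=1$ and $i\in\{1,2\}$. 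Solving the resulting equations $U_3=\pm2^a3^b$ inside each parametrized family is what eliminates all but $U(\pm1,-1)$, and the stray candidate $U(\pm3,7)$ is killed separately by the rank-$5$ prime $59$ via Lemma \ref{lemmarho<2rho}. Without some substitute for this ``bound the prime factorization of $U_3$, then solve Diophantine conditions in each family'' step, your second stage cannot be carried out, so the proposal as written does not prove the lemma.
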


\begin{proof}
    Let $C_{U,6}$ be a multi-Catalan sequence and note that
    \begin{equation}\label{U6}
        U_6 = U_2U_3 (P^2-3Q).
    \end{equation}
    By Theorem \ref{thmDefect}, either $U$ is $6$-defective or, when $\rho_U(5)=6$, $U$ is $5$-defective. From \eqref{U6}, having $\rho_U(5)=6$ implies that $5\mid P^2-3Q$. Testing the seven $5$-defective sequences displayed in Table \ref{TableA}, we find that only $U(1,2)$ satisfies this divisibility relation. By Lemma \ref{exceptions}, we know that $C_{U,6}$ is multi-Catalan for $U(1,2)$. Therefore, any other $U$ such that $C_{U,6}$ is multi-Catalan must be $6$-defective. For the four families of $6$-defective sequences, we work on $U_3$ to find multi-Catalan sequences. First, by Lemma \ref{lemmarho<2rho}, $U_3$ may only have $2$, $3$ or $5$ as prime divisors and, since $\gcd{(U_2,U_3)}=1$ by regularity, these would be primitive prime divisors of rank $3$. Moreover, $5$ is actually not a divisor of $U_3$. Indeed, if it were the case, then we would have $\rho_U(5)=3$ and
    \[
    v_5\biggl( \binom{7( 5^j\rho-1)}{5^j\rho-1}_{U,6} \biggr) = 6j + 4v_5(U_3)+1 < 6(j + v_5(U_3)) = v_5(U_{5^j \rho}^6),
    \]
    for all $j\geq 0$, by Lemmas \ref{valuationlucasnomials} and \ref{valuationlucasnomials2}. Hence, $C_{U,6}( 5^j\rho-1)$ is non-integral for all $j\geq 0$, contradicting our hypothesis. Therefore, $U_3=\pm 2^a 3^b$ for some $a,b\geq 0$. If $b\geq1$ then $3$ has rank $3$ in $U$ and, by Lemmas \ref{valuationlucasnomials} and \ref{valuationlucasnomials2},
    \[
    v_3\biggl( \binom{7(3^{j+1}-1)}{3^{j+1}-1}_{U,6} \biggr) =6j+4b+2,
    \]
    for every $j\geq 1$, which is greater than or equal to $v_p(U_{3^{j+1}}^6) = 6(j+b)$ if and only if $b=1$. Hence $b\in \{0,1\}$. Let us now study the four families.
    \begin{enumerate}[label=(\arabic*), leftmargin = \widthof{[(1)]}]
        \item Let $Q=(P^2-1)/3$ with $3\nmid P\geq 4$. Note that $U_3= (2P^2+1)/3$ is an odd integer, so $a=0$. Hence either $U_3=\pm1$ or $U_3=\pm 3$, and it follows that $P^2 \in \{-5,-2,1,4\}$. But $P$ is an integer greater than or equal to $4$, so there is no solution.
        
        \item Let $Q=(P^2\pm3)/3$ with $3\mid P$. Note that from 
        \[
        U_3=\frac{2P^2\pm 3}{3} = \pm 2^a 3^b,
        \]
        we obtain $\pm3 = \pm 2^a 3^{b+1}-2P^2$. Since $2$ and $9$ do not divide $3$, $a$ and $b$ must be equal to zero. It follows that $P^2\in \{-3,0,3\}$, giving no solution since $P$ is a non-zero integer.

        \item Let $Q=(P^2-(-2)^i)/3$ with $P\equiv \pm1$ (mod $6$), $i\geq 1$ and $(P,i)\ne (1,1)$. Since $2\nmid PQ$, we see that $\rho_U(2)=3$. Furthermore, note that
        \[
        \delta = v_2\biggl( \frac{P^2-3Q}{2}\biggr) = v_2\biggl( \frac{(-2)^i}{2} \biggr)= i-1,
        \]
        and
        \begin{equation}\label{val1}
            v_2\biggl( \binom{7(3\cdot 2^j-1)}{3\cdot 2^j-1}_{U,6} \biggr) = 6j + 5\delta + 4a + 3,
        \end{equation}
        for every $j\geq 1$. For $6j+5\delta+4a+3$ in \eqref{val1} to be at least equal to $v_2(U_{3\cdot 2^j}^6)= 6(j+\delta+a)$, the inequality $\delta+2a \leq 3$ must be satisfied. Since $a\geq 1$, we have $a=1$ and $i\in \{1,2\}$ as the only solutions. If $i=1$, then
        \[
        U_3=\frac{2P^2-2}{3} = \pm 2 \cdot 3^b,
        \]
        i.e., $P^2=1\pm 3^{b+1}$. Thus, $P^2\in \{ -8,-2,4,10 \}$ and $P=\pm2$. However, $P$ is congruent to $\pm1$ modulo $6$, so there is no solution in the case $i=1$. If $i=2$, then, by the same method, $P^2=-2\pm 3^{b+1}$. Hence $P^2\in \{ -11,-5,1,7\}$ and we find the Fibonacci sequences $U(\pm 1,-1)$ as candidates. These are two of the exceptions in Lemma \ref{exceptions}.

        \item Let $Q=(P^2\pm 3\cdot 2^i)/3$ with $P\equiv 3$ (mod $6$) and $i\geq 1$. Then
        \[  
        U_3=\frac{2P^2\pm 3\cdot 2^i}{3} = \pm 2^a 3^b,
        \]
        and, using the previous method, we find $a=1$ and $i\in \{1,2\}$. But $9\mid P^2$, so that $b=0$. Solving for $P$ and $i$, we find that $P=\pm 3$ and $i=2$. Hence,
        \[
        \quad Q=\frac{P^2+3\cdot 2^i}{3} = 7.
        \]
        However, for the sequences $U(\pm3,7)$, we have $\rho_U(59)=5\leq 6 <\min{(2\cdot 5, 59)}=10$. By Lemma \ref{lemmarho<2rho}, $C_{U,6}$ is not a multi-Catalan sequence for $U(\pm3,7)$.
    \end{enumerate}
\end{proof}

\begin{theorem}\label{TheTHM}
    Assume $l\geq 2$. There are no $\Delta$-regular Lucas sequences $U$ with $C_{U,l}$ multi-Catalan other than the exceptions of Lemma \ref{exceptions}.
\end{theorem}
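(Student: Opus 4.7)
The plan is to combine Theorem \ref{thmDefect} with the primitive divisor theorem to cut the problem down to finitely many candidate pairs $(U,l)$, then for each candidate either recognise it in Table \ref{TableC} and invoke Lemma \ref{exceptions}, or eliminate it by producing an infinite family of non-integer terms via one of the obstruction lemmas. The argument parallels, for each $l$, the case analysis that Lemma \ref{l=6} carries out for $l=6$.

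For the first reduction, suppose $C_{U,l}$ is multi-Catalan with $l\geq 2$. By Theorem \ref{thmDefect} either $U$ is $l$-defective, or $l-1\in \mathbb{P}_{>3}$ has rank $l$ in $U$ and $U$ is $(l-1)$-defective. The primitive divisor theorem forces the defect index to lie in $\llbracket 1,30\rrbracket$, so $l\leq 31$. Since Lemma \ref{l=6} has already disposed of $l=6$, Table \ref{TableA} leaves only a finite and explicit list of candidates $(U,l)$ to examine.

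For the second reduction, fix such a candidate and enumerate the primes $p$ dividing some $U_k$ with $k\leq l$, together with their ranks $\rho=\rho_U(p)$. Each prime is screened against the obstruction lemmas: Lemma \ref{lemmap>l} kills $(U,l)$ whenever some $p>l$ has rank $l$, via the family $n=lp^j-1$; Lemma \ref{lemmarho<2rho} kills it whenever some odd $p$ satisfies $\rho\leq l\leq \min(2\rho,p-1)$, via $n=\rho p^j-1$; Proposition \ref{Propp+1} kills it whenever $l-1\in\mathbb{P}_{>3}$ has rank $l$ and some $q\nmid Q$ has rank $l-1$, via $n=(l-1)q^j-1$. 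A prime surviving all three tests and fitting the hypotheses of Lemma \ref{lemmap+1<l<} (in particular $\rho=p-\legendre{\Delta}{p}$, $v_p(U_\rho)=1$, and $\max(\rho,p)\leq l<2\rho$) contributes no $p$-adic obstruction away from finitely many $n$, so it may be ignored. A candidate is retained only if every prime involved falls into this last bucket; the survivors will be exactly the pairs of Table \ref{TableC}, at which point Lemma \ref{exceptions} certifies multi-Catalanity.

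The third step, and the main obstacle, is the ``edge'' configuration in which $l=2\rho$ for some small prime $p$, so that Lemma \ref{lemmap+1<l<} fails to apply. Exactly as in Lemma \ref{exceptions} for the pairs $(U(1,-1),6)$ and $(U(1,2),8)$, one must then recompute $v_p\bigl(\binom{(l+1)n}{n}_{U,l}\bigr)$ directly from Lemmas \ref{valuationlucasnomials} and \ref{valuationlucasnomials2}, splitting $n+1=\lambda\rho p^j$ according to $j$ and the $p$-adic structure of $\lambda$ (separating $\lambda\equiv 1\pmod p$, $\lambda\not\equiv 1\pmod p$, and the tiny initial cases), and checking that a carry produced by a suitable choice of $i$ pushes the valuation above $v_p(U_{n+1}^l)$. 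The difficulty is genuine but purely combinatorial: for each of the roughly twenty remaining values of $l$, every defective family in Table \ref{TableA} must be screened prime by prime, and a bounded number of $l=2\rho$ sub-cases need bespoke valuation calculations modelled on those in Lemmas \ref{l=6} and \ref{exceptions}. Once this bookkeeping is carried out, exactly the pairs of Table \ref{TableC} remain, completing the proof.
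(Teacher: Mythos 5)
Your overall skeleton matches the paper's: use Theorem \ref{thmDefect} plus the primitive divisor theorem to force $l\leq 31$, read candidate defective sequences off Table \ref{TableA}, kill non-exceptional candidates with an obstruction lemma (the paper uses Lemma \ref{lemmarho<2rho} with the data of Table \ref{TableB}; your additional invocations of Lemma \ref{lemmap>l} and Proposition \ref{Propp+1} are harmless, since they are already baked into Theorem \ref{thmDefect}), and certify the survivors with Lemma \ref{exceptions}. For $l\in\{5,7,8,10,12,13,14,18,30,31\}$ this is essentially the paper's argument.

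The genuine gap is your claim that, once $l=6$ is delegated to Lemma \ref{l=6}, ``Table \ref{TableA} leaves only a finite and explicit list of candidates $(U,l)$ to examine.'' That is false for $l=2,3,4$: the $2$-, $3$- and $4$-defective $\Delta$-regular sequences in Table \ref{TableA} are infinite parametric families (e.g.\ $(1,Q)$ and $(2^i,Q)$ for $l=2$; $(P,P^2\pm1)$, $(P,P^2\pm 3^i)$ for $l=3$), so your plan of ``enumerate the primes dividing $U_k$, $k\leq l$, prime by prime for each candidate'' cannot be executed as a finite screening, and the difficulty is not confined to the $l=2\rho$ edge configurations you flag. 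The paper closes these cases by a different mechanism: it applies Lemmas \ref{valuationlucasnomials} and \ref{valuationlucasnomials2} directly to the families $n=\rho p^j-1$ for the only possible small-rank primes ($p\in\{2,3\}$), converting integrality into Diophantine constraints on the early terms — e.g.\ $P=\pm1,\pm2$ when $\rho_U(p)=2$, $P^2-2Q=\pm1,\pm3$ when $\rho_U(3)=4$ for $l=4$, and $U_2,U_3$ coprime divisors of $6$ for $l=3$ — and then solves these equations for $(P,Q)$, recovering exactly the lines of Table \ref{TableC}. Without an argument of this kind (and the analogous analysis inside the infinite $6$-defective families, which Lemma \ref{l=6} performs via $U_3=\pm 2^a3^b$), your proof does not establish the theorem for $l=2,3,4$, which is a substantial part of the statement.
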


\begin{proof}
    If $l>30$, then by Theorem \ref{thmDefect} the sequence $C_{U,l}$ is multi-Catalan only if $U$ is either $l$ or $(l-1)$-defective. The primitive divisor theorem ensures that this is not the case for $l>31$. For $l=31$, $U$ must only be $l$-defective, thus the same conclusion holds.

    Let $5\leq l \leq 30$ be an integer not equal to $6$. Using both points of Theorem \ref{thmDefect} and inspecting Table \ref{TableA}, we find that the only possible sequences $U$ are those in line $l$ of Table \ref{TableA} if $l\in\{5, 7,10,12,13,18,30 \}$, in line $13$ if $l=14$ and in lines $7$ or $8$ if $l=8$. We start with $l=30$, where $U=U(1,2)$ is the only candidate. Note that the prime $271$ has rank $\rho=17$ in $U(1,2)$. Since $\rho \leq l=30 <\min{(2\rho,271)}$, Lemma \ref{lemmarho<2rho} ensures that $C_{U,30}$ is not multi-Catalan. The same method applies to every other case, except for the exceptions of Lemma \ref{exceptions}. Table \ref{TableB} lists some possible choices for $p$ and $\rho_U(p)$ in each case.
    
    We now study the remaining cases, that is, $l=2,3,4$ or $6$, but the case $l=6$ has already been studied in Lemma \ref{l=6}.
    \begin{enumerate}[label = (\arabic*), leftmargin = \widthof{[(1)]}]
        \item Let $l=4$. Assume $C_{U,4}$ is a multi-Catalan sequence. By Lemma \ref{lemmarho<2rho}, only the primes $2$ and $3$ may have rank lower than or equal to $4$ in $U$. Let $p\in \{2,3\}$ and assume $\rho=\rho_U(p)=2$. By Lemmas \ref{valuationlucasnomials} and \ref{valuationlucasnomials2}, and Corollary \ref{TLucMultiVal}, we have
        \[
        v_p\biggl(\binom{5n}{n}_{U,4}\biggr) = 4j +2v_p(P) +
        \begin{cases}
            1, & \text{if $p=3$;} \\
            3, & \text{if $p=2$,}
        \end{cases}
        \]
        for every $n\geq 1$ of the form $n=\rho p^j-1$, $j\geq 0$. Note that $\delta_j$ does not appear here since $\delta_j=0$ for $j\geq 0$ when $\rho_U(2)=2$. Since $v_p(U_{n+1}^4) = 4j+4v_p(P)$, we must have $P=\pm1$ or $P=\pm 2$, otherwise $C_{U,4}(\rho p^j-1)$ is not integral for all $j\geq 0$. If $3$ has rank $\rho=4$, then
        \[
        v_3\biggl(\binom{5n}{n}_{U,4}\biggr) = 4j +3v_p(U_4) +1,
        \]
        for every $n\geq 1$ of the form $n=3^j\rho-1$, $j\geq 0$. Since $v_3(U_{n+1}^4)=4j+4v_p(U_4)$ and $U_4=U_2(P^2-2Q)$, it follows that $P^2-2Q = \pm1 $ or $\pm 3$. Hence
        \[
        Q= \frac{P^2\pm1}{2} \quad \text{or} \quad Q=\frac{P^2\pm3}{2},
        \]
        and in particular $P=\pm1$. Therefore, $(P,Q) \in \{ (\pm1,0), (\pm1,1), (\pm1,2), (\pm1,-1) \}$. Since $Q\ne0$ and $U(\pm1,1)$ is a degenerate Lucas sequence with $U_3=0$, we have $(P,Q) =(\pm1,2)$ or $(\pm1,-1)$, which are exceptions found in Lemma \ref{exceptions}.

        \item Let $l=3$. Following the method we used for $l=4$, we find that the only primes susceptible of having rank $2$ or $3$ are the primes $2$ and $3$. By Lemmas \ref{valuationlucasnomials} and \ref{valuationlucasnomials2}, Corollary \ref{TLucMultiVal} and the assumption that $p\in \{2,3\}$ has rank $2$ in $U$, we have
        \[
        v_p\biggl(\binom{4n}{n}_{U,3}\biggr) = 3j +2v_p(P) +1,
        \]
        for every $n\geq 1$ of the form $n=\rho p^j-1$, $j\geq 0$. Assuming $\rho_U(p)=3$, we obtain
        \[
        v_p\biggl(\binom{4n}{n}_{U,3} \biggr) = 3(j+\delta_j) +2v_p(U_3) +1,
        \]
        for every $n\geq 1$ of the form $n=\rho p^j-1$, with $j\geq 0$. Therefore, $U_2$ and $U_3$ are coprime divisors of $6$, positive or negative, where the coprimality is a consequence of the regularity of $U$. The pair $(U_2,U_3)$ is then one of the following:
        \begin{align*}
            &(1,\pm1), (1,\pm2), (1,\pm3), (1, \pm6), \\
            &(2,\pm1), (2,\pm3), (3,\pm1), (3, \pm2), (6,\pm1),
        \end{align*}
        assuming $P>0$. Since $U_3=P^2-Q$, we find $(P,Q)$ to be equal to either $(1,0)$, $(2,1)$, or one of the pairs displayed in line $5$ of Table \ref{TableC}. However, $Q\ne 0$ and $U(2,1)$ is not $\Delta$-regular. Hence $(P,Q)$ is in line $5$ of Table \ref{TableC}, that is, $U(P,Q)$ is an exception mentioned in Lemma \ref{exceptions}.

        \item Let $l=2$. By the same method, only $2$ may have rank $2$ in $U$. In that case, putting $n=\rho 2^j-1$, we obtain
        \[
        v_2\biggl(\binom{3n}{n}_{U,2}\biggr) = 2j +v_p(P) +1,
        \]
        by Lemmas \ref{valuationlucasnomials} and \ref{valuationlucasnomials2}, and Corollary \ref{TLucMultiVal}. It follows that $(P,Q)$ is either equal to $(\pm2,Q)$ or to $(\pm1,Q)$, with $Q\ne 1$, since $(\pm1,1)$ is a degenerate Lucas sequence. Hence $U(P,Q)$ is an exception mentioned in Lemma \ref{exceptions}.
    \end{enumerate}
\end{proof}

Although $C_{U,l}$ has infinitely many non-integer terms for many pairs $(U,l)$, one can show that the set of $n\geq 1$ for which $C_{U,2}(n)$ is an integer has asymptotic density $1$. This can potentially be done by following the method used by Ballot \cite[Lemma 36]{Ba1}. We state this as a conjecture which we leave as an open problem.

\begin{conjecture}
    Let $l\geq 1$ be an integer and $U$ a $\Delta$-regular Lucas sequence. Then the numbers $C_{U,l}(n)$ are integers for almost all $n\geq 1$.
\end{conjecture}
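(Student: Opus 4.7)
The plan is to establish the density-zero property prime by prime. By Theorem~\ref{ThmRho>l}, any prime $p\nmid Q$ with $\rho_U(p)>l$ satisfies $v_p(C_{U,l}(n))\geq 0$ for every $n\geq 1$, so the only primes that can force $C_{U,l}(n)\notin\mathbb{Z}$ are those with $\rho_U(p)\leq l$. Such primes divide $U_1 U_2\cdots U_l$ and do not divide $Q$, hence form a finite set $\mathcal{S}$. It therefore suffices to show that for each $p\in\mathcal{S}$, the set $B_p:=\{n\geq 1:v_p(C_{U,l}(n))<0\}$ has asymptotic density zero; the conclusion then follows from subadditivity over the finite union $\bigcup_{p\in\mathcal{S}}B_p$.

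Fix $p\in\mathcal{S}$ with rank $\rho$. Every $n\in B_p$ satisfies $p\mid U_{n+1}$, so $\rho\mid n+1$ and we may write $n+1=\lambda\rho p^j$ with $p\nmid\lambda$, $\lambda\geq 1$, $j\geq 0$. Plugging this parametrization into Corollary~\ref{TLucMultiVal} and summing the per-$i$ contributions from Lemmas~\ref{valuationlucasnomials} and \ref{valuationlucasnomials2}, I would obtain an explicit lower bound of the shape
\[
v_p\biggl(\binom{(l+1)n}{n}_{U,l}\biggr)\geq l(j+\delta_j)+(l-1)v_p(U_\rho)+E(\lambda,j),
\]
where $E(\lambda,j)=\sum_{i}v_p(\Lambda_i(\lambda))+\sum_i c_i(\lambda)$ collects the linear factors $\Lambda_i(\lambda)=\lambda(i+1)-1$ (or the variants from Lemma~\ref{valuationlucasnomials2}) and the carry counts arising from the base-$p$ additions of $\lambda-1$ and $\lambda i-1$. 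Comparison with $v_p(U_{n+1}^l)=l(j+\delta_j+v_p(U_\rho))$ shows that $n\in B_p$ forces $E(\lambda,j)<v_p(U_\rho)$, which is a condition purely on the base-$p$ expansion of $\lambda$.

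The heart of the matter is to prove that, for each $d\geq 1$, the $\lambda$ with $E(\lambda,j)<v_p(U_\rho)$ lie in a union of residue classes modulo $p^d$ whose proportion $c_d$ among integers coprime to $p$ satisfies $c_d\to 0$ as $d\to\infty$. Each constraint of the form $\Lambda_i(\lambda)\not\equiv 0\pmod{p^s}$ removes one residue class modulo $p^s$, and the carry conditions $c_i(\lambda)=0$ constrain additional base-$p$ digits of $\lambda$; together these cut out a Cantor-type subset of the $p$-adic integers, whose Haar measure vanishes. Counting bad $n\leq X$ slice-by-slice in $j$ yields
\[
\#\bigl(B_p\cap[1,X]\bigr)\leq \sum_{j\geq 0}c_{d(j)}\cdot\frac{X+1}{\rho p^j}=o(X),
\]
where $d(j)\to\infty$ slowly enough that the geometric factor $p^{-j}$ dominates.

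The main obstacle will be turning the heuristic ``for most $\lambda$, $E(\lambda,j)$ is large'' into a quantitative density estimate that is uniform in $j$. The linear factors $v_p(\Lambda_i)$ contribute in a straightforward residue-class manner, but the carry terms $c_i(\lambda)$ depend on the entire base-$p$ expansion of $\lambda$ and interact nontrivially across different $i$; in addition, the boundary cases $j=0$, $i=\rho$, and $i=2\rho$, where the formulas of Lemmas~\ref{valuationlucasnomials} and \ref{valuationlucasnomials2} carry bracket-term corrections, must be folded in without loss of uniformity. I expect that the right tool, following the template of Ballot's \cite[Lemma~36]{Ba1}, is a stratification of $B_p$ by the $p$-adic ``complexity'' of $\lambda$, combined with Legendre-formula bookkeeping to control the carries.
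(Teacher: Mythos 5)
You should first note that the paper does not prove this statement at all: it is stated as a conjecture and explicitly left as an open problem. The authors themselves remark that the route you are following (Ballot's density argument, \cite[Lemma 36]{Ba1}) can ``potentially'' be adapted, that adjustments already arise for $l=2$, and that they believe a genuinely different method is needed in general. So your proposal has to stand on its own as a proof, and it does not: the step you yourself flag as ``the main obstacle'' --- showing, uniformly enough in $j$ to sum the slices $n+1=\lambda\rho p^j$, that the set of $\lambda$ with $E(\lambda,j)<v_p(U_\rho)$ has density zero --- is precisely the entire content of the conjecture once the easy reductions (finiteness of the set of relevant primes via Theorem \ref{ThmRho>l}, subadditivity over $p$, the split over $j$) are made. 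Announcing that the carry conditions ``cut out a Cantor-type subset of the $p$-adic integers'' is a heuristic, not an argument; nothing in the paper supplies the required quantitative digit estimate, and you do not supply one either.

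There is also a concrete structural gap earlier in your outline. Your claimed lower bound $v_p\bigl(\binom{(l+1)n}{n}_{U,l}\bigr)\geq l(j+\delta_j)+(l-1)v_p(U_\rho)+E(\lambda,j)$ is obtained by summing Lemmas \ref{valuationlucasnomials} and \ref{valuationlucasnomials2} over $i=1,\ldots,l$, but those lemmas only cover $i\in\llbracket 1,2\rho\rrbracket$. When $l>2\rho$ (which happens for all but finitely many of the primes in your set $\mathcal{S}$ as $l$ grows) you have no per-$i$ valuation formula for $i>2\rho$, and moreover the deficit against $v_p(U_{n+1}^l)=l(j+\delta_j+v_p(U_\rho))$ is not a single $v_p(U_\rho)$ but roughly $\lfloor l/\rho\rfloor\, v_p(U_\rho)$, since the $v_p(U_\rho)$ term drops out at every $i$ divisible by $\rho$. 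So the criterion ``$n\in B_p$ forces $E(\lambda,j)<v_p(U_\rho)$'' is only correct in the range $\rho\leq l<2\rho$; to run your program in general you would first have to extend the valuation lemmas to all $i$ (additions crossing several multiples of $\rho$, with the attendant bracket corrections) and then prove the uniform density estimate for the resulting, larger threshold. Neither step is done, so the proposal is a plausible plan --- essentially the one the authors gesture at for small $l$ --- but not a proof.
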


However, some adjustments need to be made in the case $l=2$ to the proof given by Ballot. These adjustments can be made to prove the conjecture for small values of $l$, but we think that a different method is required to prove the general case.

\section{Appendix}\label{Section7}

\renewcommand{\arraystretch}{1.4}

In this appendix section, we display tables that are used in various proofs. The first table below lists $n$-defective $\Delta$-regular Lucas sequences $U(P,Q)$, with $P>0$ and $Q\ne 0$, for $n\geq 2$. It was put together by Ballot \cite{Ba1}, but was first given in different tables \cite{Ab, Bi}, where the parametrization was done in terms of $P$ and $\Delta$ instead. Note that the parameter $i$ is an integer.

\begin{table}[H]
    \centering
    \begin{tabular}{|c|c|}
         \hline
         $n$ & $(P,Q)$ with $P>0$ and $(P,Q)\ne (2,1)$\\
         \hline \hline
         $2$ & $(1,Q)$ with $Q\ne 1$; \quad $(2^i, Q)$ with $Q$ odd, $i\geq 1$ and $(Q,i)\ne (1,1)$ \\
         \hline
         $3$ & $(P, P^2\pm 1)$; \quad $(P,P^2\pm 3^i)$ with $3\nmid P$ and $i\geq 0$ \\
         \hline
         $4$ & $(P, \textstyle (P^2\pm 1)/2)$ with $P>1$ odd; \quad $(2i, 2i^2\pm1)$ with $i\geq 1$ \\
         \hline
         $5$ & $(1,-1)$ \quad $(1,2)$ \quad $(1,3)$ \quad $(1,4)$ \quad $(2,11)$ \quad $(12,55)$ \quad $(12,377)$ \\
         \hline
             & $(P,(P^2-1)/3)$ with $3\nmid P \geq 4$; \quad $(P,(P^2\pm3)/3)$ with $3\mid P$; \\
         
         $6$ & $(P,(P^2-(-2)^i)/3)$ with $P\equiv \pm 1$ (mod $6$), $i\geq 1$ and $(P,i) \ne (1,1)$; \\
         
             & $(P, (P^2\pm3\cdot 2^i)/3)$ with $P\equiv 3$ (mod $6$) and $i\geq 1$ \\
         \hline
         $7$ & $(1,2)$ \quad $(1,5)$ \\
         \hline
         $8$ & $(1,2)$ \quad $(2,7)$ \\
         \hline
         $10$ & $(2,3)$ \quad $(5,7)$ \quad $(5,18)$ \\
         \hline
         $12$ & $(1,-1)$ \quad $(1,2)$ \quad $(1,3)$ \quad $(1,4)$ \quad $(1,5)$ \quad $(2,15)$ \\
         \hline
         $13$ & $(1,2)$ \\
         \hline
         $18$ & $(1,2)$ \\
         \hline
         $30$ & $(1,2)$ \\
         \hline
    \end{tabular}
    \caption{List of all $n$-defective and $\Delta$-regular Lucas sequences, $n\geq 2$.}
    \label{TableA}
\end{table}

Let $l\geq 1$ be an integer and $U=U(P,Q)$, $P>0$, a Lucas sequence. Given a pair $(U,l)$, our second table provides possible choices for a prime $p\nmid Q$ and its rank $\rho_U(p)$ that validates the proof of Theorem \ref{TheTHM} through the use of Lemma \ref{lemmarho<2rho}. For instance, when $U=U(1,4)$ and $l=5$, we find that $(p,\rho_U(p))=(7,4)$. By Lemma \ref{lemmarho<2rho}, since $\rho_U(7)\leq 5\leq \min{(2\rho_U(7),6)} = 6$, the numbers $C_{U,5}(4\cdot 7^j-1)$ are not integers for all $j\geq 1$.

Note that the first column displays pairs $(P,Q)$ for Lucas sequences $U=U(P,Q)$, with $P>0$, and that the first line displays values of $l$. An empty cell means that it is not needed to apply Lemma \ref{lemmarho<2rho} for the corresponding pair $(U,l)$, thus no prime is provided.

\begin{table}[H]
    \centering
    \begin{tabular}{|c||c|c|c|c|c|c|c|c|c|}
         \hline
         $U\setminus l$ & $5$ & $7$ & $8$ & $10$ & $12$ & $13$ & $14$ & $18$ \\
         \hline \hline
         $(1,2)$ & & & & & $(17,9)$ & $(17,9)$ & $(17,9)$ & $(31,16)$ \\
         \hline
         $(1,5)$ & & $(11,5)$ & $(11,5)$ & & $(71,9) $ & & &   \\
         \hline
         $(1,4)$ & $(7,4)$ &  & & & $(43,11)$ & & &   \\
         \hline
         $(1,3)$ & &  & & & $(23,11)$ & & &   \\
         \hline
         $(1,-1)$ & &  & & & $(89,11)$ & & &   \\
         \hline
         $(2,15)$ & &  & & & $(43,11)$ & & &  \\
         \hline
         $(2,3)$ &  & & & $(73,9)$ & & & &  \\
         \hline
         $(5,18)$ & &  & & $(31,8)$ & & & &  \\
         \hline
         $(5,7)$ & &  & & $(19,9)$ & & & &  \\
         \hline
         $(2,7)$ & &  & $(17,6)$ & & & & & \\
         \hline
         $(2,11)$ & $(7,3)$ &  & & & & & & \\
         \hline
         $(2,55)$ & $(17,3)$ & & & & & & & \\
         \hline
         $(2,277)$ & $(31,3)$ & & & & & & & \\
         \hline
    \end{tabular}
    \caption{Choices of $(p,\rho_U(p))$ for a successful application of Lemma \ref{lemmarho<2rho}, $U=U(P,Q)$, $P>0$.}
    \label{TableB}
\end{table}

Our last table lists all pairs $(U,l)$, with $l\geq 2$ and $U$ a $\Delta$-regular Lucas sequence, such that the numbers
\[
C_{U,l}(n) = \frac{1}{U_{n+1}^l} \binom{(l+1)n}{n,\ldots,n}_{U}
\]
are integers for all but finitely many $n\geq 1$.

\begin{table}[H]
    \centering
    \begin{tabular}{|c|c|}
         \hline
         $l$ & $(P,Q)$ with $P>0$\\
         \hline \hline
         $2$ & $(1,Q)$ with $Q\ne 1$; \quad $(2, Q)$ with $Q\ne 1$ odd \\
         \hline
             & $(1,-5)$ \quad $(1,-2)$ \quad $(1,-1)$  \quad $(1,2)$ \quad  $(1,3)$ \quad  $(1,4)$  \\
         $3$ & $(1,7)$ \quad $(2,3)$ \quad $(2,5)$ \quad $(2,7)$ \quad $(3,7)$ \quad $(3,8)$  \\
             & $(3,10)$ \quad $(3,11)$ \quad $(6,35)$ \quad $(6,37)$ \\
         \hline
         $4$ & $(1,-1)$ \quad $(1,2)$ \\
         \hline
         $5$ & $(1,-1)$ \quad $(1,2)$ \quad $(1,3)$ \\
         \hline
         $6$ &  $(1,-1)$ \quad $(1,2)$ \\
         \hline
         $7$ & $(1,2)$  \\
         \hline
         $8$ & $(1,2)$ \\
         \hline
    \end{tabular}
    \caption{List of all multi-Catalan $\Delta$-regular Lucas sequences, $l\geq 2$.}
    \label{TableC}
\end{table}

\section{Acknowledgments}

We thank the referee for his or her positive report. We are grateful to Christian Ballot for his mathematical advice and writing commentaries.

\end{document}